\documentclass[11pt,dvipsnames]{amsart}
\usepackage{amsmath,amssymb,amsfonts}
\usepackage[mathscr]{eucal}
\usepackage[margin=1in]{geometry}
\usepackage{url}
\usepackage{multirow}%for tables

\usepackage{tikz, tikz-cd}%
\usetikzlibrary{matrix,arrows,decorations.pathmorphing,cd,patterns,calc,backgrounds,patterns}
\tikzset{dummy/.style= {circle,fill,draw,inner sep=0pt,minimum size=1.2mm}}
\tikzset{vertex/.style={fill, circle, minimum size=.1cm, inner sep=0pt}}

\usepackage[%pagebackref,  %incompatible with bib
colorlinks, citecolor=black, linkcolor=black, urlcolor=MidnightBlue]{hyperref}
\hypersetup{linktoc=all,} %set to all if you want both sections and subsections linked
%-----------------------%-----------------------%-----------------------%--------COLORS--------------%-----------------------%-----------------------

\usepackage[dvipsnames]{xcolor}

\definecolor{myblue}{RGB}{100, 143, 225}
\definecolor{mypurple}{RGB}{120, 94, 240}
\definecolor{myred}{RGB}{220, 38, 127}
\definecolor{myorange}{RGB}{254, 97, 0}
\definecolor{myyellow}{RGB}{255, 176, 0}

%-----------------------%-----------------------%-----------------------%--------CREFS---------------%-----------------------%-----------------------

\usepackage{enumerate}

\numberwithin{equation}{section} 
\numberwithin{figure}{section}
\usepackage[nameinlink,capitalise,noabbrev]{cleveref}

\newcommand{\newrefformat}[2]{}

%Definitions
\crefname{lemma}{Lemma}{Lemmas}
\crefname{theorem}{Theorem}{Theorems}
\crefname{definition}{Definition}{Definitions}
\crefname{proposition}{Proposition}{Propositions}
\crefname{remark}{Remark}{Remarks}
\crefname{corollary}{Corollary}{Corollaries}
\crefname{equation}{Equation}{Equations}
\crefname{construction}{Construction}{Constructions}
\crefname{ex}{Example}{Examples}
\crefname{appsec}{Appendix}{Appendices}
\crefname{subsection}{Section}{Sections}
\crefname{goal}{Goal}{Goals}

%-----------%-----------%-----------%-----------%-----------%-----------%-----------%----------- ENVIRONMENTS %-----------%-----------%-----------%-----------%-----------%-----------%-----------
\theoremstyle{plain}
\newtheorem{theorem}[equation]{Theorem}

\newtheorem{proposition}[equation]{Proposition}
\newtheorem{lemma}[equation]{Lemma}

\theoremstyle{definition}
\newtheorem{definition}[equation]{Definition}
\newtheorem{example}[equation]{Example}
\newtheorem{remark}[equation]{Remark}
\newtheorem{construction}[equation]{Construction}

\input xy
\xyoption{all}

%-----------%-----------%-----------%-----------%-----------%-----------%-----------%----------- SHORTCUTS %-----------%-----------%-----------%-----------%-----------%-----------%-----------
\newcommand{\Cat}{\mathcal Cat}
\newcommand{\Fin}{\mathscr{F}}
\newcommand{\Fun}{\operatorname{Fun}}

\newcommand{\id}{\operatorname{id}}

\newcommand{\op}{\operatorname{op}}
\newcommand{\T}{\mathcal T}
\newcommand{\tree}{\int_{\Fin_*^{\op}} \T}
\newcommand{\treeg}{\int_{\Fin_{G,*}^{\op}} \T^G}

\usepackage[textwidth=25mm, textsize=tiny]{todonotes}
% David's notes

% Angelica's notes

% Maxine's notes

% Maru's notes

% Julie's notes

\title{A comparison of definitions of equivariant trees}
\author[J.E.\ Bergner, M.E.\ Calle, D.\ Chan, A.\ Osorno, and M.\ Sarazola]{Julia E.\ Bergner, Maxine E.\ Calle, David Chan,\\Ang\'elica M. Osorno, and Maru Sarazola}

%Julie
\address{Department of Mathematics, University of Virginia, Charlottesville, VA 22904}
\email{jeb2md@virginia.edu}

%Maxine
\address{Department of Mathematics, University of Pennsylvania, Philadelphia, PA 19104}
\email{callem@sas.upenn.edu}

%David
\address{Department of Mathematics, Michigan State University, East Lansing, MI 48824}
\email{chandav2@msu.edu}

%Angelica
\address{Department of Mathematics and Statistics, Reed College, Portland, OR 97214}
\email{aosorno@reed.edu}

%Maru
\address{School of Mathematics, University of Minnesota, Minneapolis MN, 55455}
\email{maru@umn.edu}

\date{\today}

\begin{document}

\begin{abstract}
    We show that various categories of trees can be modeled by Grothendieck constructions on categories of trees with a fixed set of leaves.  We prove this result for the dendroidal category $\Omega$, the category $\Omega^G$ of trees with a $G$-action for a finite group $G$, and finally for the category of genuine equivariant trees $\Omega_G$ that has played an important role in recent work on genuine equivariant operads. 
\end{abstract}

\maketitle

\setcounter{tocdepth}{1} %%comment this line to revert to the longer table of contents

\tableofcontents

\section{Introduction}
In recent work, Bonventre and Pereira develop a homotopy theory of genuine equivariant operads \cite{bonventre:19, BonventrePereira, Pereira:EquivariantDendroidalSets}.  Inspired by the $N_\infty$-operads of Blumberg and Hill \cite{BlumbergHillOperadic}, they are designed to encode not only the action of a finite group $G$ but also the more subtle data of norm maps that have proved to be significant in stable equivariant homotopy theory.  Central to their constructions is a certain category $\Omega_G$ of genuine equivariant trees.

Still more recently, in the paper \cite{BBCCM:trees}, we give a definition of equivariant trees and show that they correspond to equivariant partition complexes, generalizing an equivalence in the non-equivariant setting that was established by Robinson \cite{robinson:04} and by Heuts and Moerdijk \cite{HM:23}.  For that work, we were interested in categories $\T^G(A)$ of trees whose leaves are labeled by a fixed $G$-set $A$; in particular, the morphisms in such a category do not affect the set of leaves.

The current paper grew out of a desire to understand the relationship between these two categories of equivariant trees.  Our main result is that the category $\Omega_G$ can be obtained as an iteration of two Grothendieck constructions, one of which takes values in categories closely related to, although not identical to, the categories $\T^G(A)$.

We begin our work non-equivariantly with the dendroidal category $\Omega$, which was first defined by Moerdijk and Weiss \cite{Moerdijk_2007}, and has been used to describe different models for up-to-homotopy operads by Cisinski and Moerdijk \cite{Cisinski_2011}, \cite{Cisinski_2013a}, \cite{Cisinski_2013b}, and Heuts, Hinich, and Moerdijk \cite{HHM:16}.  We prove that the dendroidal category $\Omega$ can be described as the Grothendieck construction of a certain oplax functor from the opposite category of finite pointed sets $\Fin^{\op}_*$ to the category $\Cat$ of small categories.  The following statement is a compilation of \cref{construction:oplaxT}, \cref{prop:functorT}, and \cref{OmegaisTree}.

\begin{theorem} \label{introthm:non eq}
    There is an oplax functor $\T\colon \Fin^{\op}_*\to \Cat$ that takes a finite pointed set $\underline{n}_+ = \{+,1,\dots, n\}$ to a category $\T(\underline{n})$ of trees with $n$ leaves. The dendroidal category $\Omega$ is equivalent to the Grothendieck construction of this oplax functor. 
\end{theorem}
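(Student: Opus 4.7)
The plan is to exhibit an explicit equivalence of categories $\Phi\colon \tree \to \Omega$; the oplax functor $\T$ and the Grothendieck construction on it having been constructed in \cref{construction:oplaxT} and \cref{prop:functorT}, what remains is to compare the outcome with $\Omega$.

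On objects, $\Phi$ sends a pair $(\underline{n}_+, T)$ to the underlying tree $T$, forgetting the labeling of its leaves by $\{1, \ldots, n\}$. Essential surjectivity is then immediate: every object of $\Omega$ is a tree with finitely many leaves, and choosing a bijection of its leaves with $\{1, \ldots, n\}$ yields a pair in $\tree$ whose image under $\Phi$ is that tree. For morphisms, a map $(\alpha, f)$ in $\tree$ consists of a pointed map $\alpha \colon \underline{m}_+ \to \underline{n}_+$ together with a morphism $f$ in the appropriate fiber $\T(\underline{k})$; I would send this to the composite in $\Omega$ of the morphism induced by $\alpha$ (a leaf-changing piece) with the morphism $f$ (a leaf-preserving piece).

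The main step is to show that $\Phi$ is fully faithful. For this, I would establish a normal-form decomposition for morphisms in $\Omega$: every $g \colon T \to S$ factors uniquely into a leaf-affecting part, encoded by the induced pointed map on leaf sets in $\Fin^{\op}_*$, and a leaf-preserving part living in the appropriate fiber $\T(\underline{k})$. Using the generators-and-relations description of $\Omega$ (inner face maps, outer face maps, degeneracies, and isomorphisms), outer faces and the leaf-permuting parts of isomorphisms supply the $\Fin^{\op}_*$-component, while inner faces, degeneracies, and inner automorphisms supply the fiber component. Existence of such a factorization is the combinatorial content of the argument, and uniqueness follows by analyzing which of these operations can alter the leaf labeling.

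The principal obstacle will be verifying that the bijection on morphism sets is compatible with composition. The composition rule in the Grothendieck construction of an oplax functor inserts the oplax comparison morphism of $\T$ between the two fiber parts, and one must check that the factorization of a composite $g' \circ g$ in $\Omega$ produces precisely this structure rather than a strict composite. This is exactly the condition that forces $\T$ to be oplax rather than strict, and the verification should reduce to tracing through the structure maps defined in \cref{construction:oplaxT}. A secondary care-point is to handle isomorphisms of trees in $\Omega$ that simultaneously permute leaves and rearrange internal structure, so as to distribute them correctly between the two pieces.
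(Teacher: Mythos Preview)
Your outline is correct and matches the paper's approach closely: the paper defines the same functor on objects, sends a morphism $(\varphi,f)$ to the composite $T\xrightarrow{\iota_{\varphi,T}}\varphi^*(T)\xrightarrow{f}S$ where $\iota_{\varphi,T}$ is the canonical composite of outer face maps, verifies functoriality via the commuting-quadrilaterals argument you anticipate, and then proves fullness generator-by-generator using \cref{istrue} and faithfulness by reconstructing $\varphi$ from the composite.

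One point deserves more care than your phrasing suggests. You speak of ``the induced pointed map on leaf sets,'' but a morphism $g\colon T\to S$ in $\Omega$ need not send leaves to leaves; an outer face map sends a leaf of $T$ to an inner edge of $S$. So there is no direct restriction map on leaf sets in either direction. The paper recovers $\varphi\colon\underline{m}_+\to\underline{n}_+$ by a genuinely combinatorial argument: for each leaf $i$ of $T$ let $e_i$ be its image edge in $S$, show that the sets $M_i=\{j\in\underline{m}: j\leq e_i\}$ are pairwise disjoint (using that incomparable edges stay incomparable), and define $\varphi(j)$ to be the unique $i$ with $j\in M_i$, or $+$ if none. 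This is the substance of faithfulness, and your ``unique factorization'' claim will have to pass through exactly this step. Relatedly, your division of isomorphisms into ``leaf-permuting parts'' and ``inner automorphisms'' is not how the paper handles it: since morphisms in $\T(\underline{n})$ are generated only by inner faces and degeneracies, an isomorphism in $\Omega$ is realized as a pair $(\varphi,d)$ with $\varphi$ the inverse leaf bijection and $d$ a composite of degeneracies collapsing the unary vertices introduced by $\varphi^*$.
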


We then establish the corresponding result for an intermediary category $\Omega^G$ of trees with action by a finite group $G$.  More precisely, this category $\Omega^G$ can be obtained as the Grothendieck construction of an oplax functor $\T^G$ that assigns to any finite pointed $G$-set $A_+$ a category $\T^G(A)$ of trees with $G$-action whose leaf set is $G$-equivariantly isomorphic to $A$; see \cref{constr:oplaxTG} and \cref{GOmega is GTree}. 

However, the category $\Omega^G$ is only capable of encoding the structure of operads with a $G$-action; the genuine equivariant category $\Omega_G$ was developed in order to incorporate the structure of norm maps. Indeed, Pereira observes \cite[Proposition 5.47]{Pereira:EquivariantDendroidalSets} that $\Omega_G$ can be modeled as a Grothendieck construction on the categories $\Omega^H$ as $H\leq G$ varies; we provide a full proof in \cref{prop:OmegaGH Grothendieck}.  In particular, an object of $\Omega_G$ may be thought of as an $H$-tree for some $H\leq G$. The morphisms of $\Omega_G$ encode both the equivariant maps of trees from $\Omega^H$ for any $H \leq G$ and the maps that change subgroup. In order to make this idea precise, we must pass from considering trees with $G$-action to forests with $G$-action, as detailed in \cref{sec:forests}. 

Finally, we show that $\Omega_G$ may be expressed as an iterated Grothendieck construction; its objects and morphisms are described explicitly in \cref{rmk:unpack OmegaG}. The statement of the following result is morally correct but overlooks some of the technical subtleties; see \cref{constr:oplax TGH}, \cref{prop:TGH oplax}, and \cref{theresult} for the precise details. 

\begin{theorem}\label{introthm:eq}
    The equivariant dendroidal category $\Omega_G$ is equivalent to an iterated Grothendieck construction on the categories $\T^H_+(A)$, for $H\leq G$ and a finite pointed $H$-set $A_+$.
\end{theorem}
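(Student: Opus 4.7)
The plan is to stack the two Grothendieck construction descriptions already developed in the paper. By \cref{prop:OmegaGH Grothendieck}, $\Omega_G$ is itself a Grothendieck construction whose fibers are (variants of) the categories $\Omega^H$ for subgroups $H \leq G$, with the forest language of \cref{sec:forests} controlling the subgroup-change morphisms. Meanwhile, the equivariant analogue of \cref{introthm:non eq}, namely \cref{GOmega is GTree}, realizes each $\Omega^H$ as the Grothendieck construction of an oplax functor $\T^H \colon \Fin_{H,*}^{\op} \to \Cat$ sending a finite pointed $H$-set $A_+$ to a category of $H$-trees with leaves $A$. Substituting the second description into the first should exhibit $\Omega_G$ as an iterated Grothendieck construction, modulo the bookkeeping needed to pass from trees to forests and hence from $\T^H$ to the decorated variant $\T^H_+$.

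The first step is to fix the outer indexing category encoding the orbit-type data that controls forest morphisms, and to define an oplax functor from it to $\Cat$ whose value at $H$ is $\Fin_{H,*}^{\op}$. The second step is to promote this to a ``total'' outer oplax diagram, by assigning to each pair $(H, A_+)$ the category $\T^H_+(A)$; this produces the oplax functor asserted in \cref{constr:oplax TGH} and \cref{prop:TGH oplax}. The third step is to identify the Grothendieck construction of the resulting iterated oplax diagram with $\Omega_G$. Here I would use a Fubini-type principle for iterated Grothendieck constructions, together with \cref{prop:OmegaGH Grothendieck} and \cref{GOmega is GTree}, so that the equivalence reduces to matching objects (triples $(H, A_+, T)$) and morphisms with the explicit unpacking given in \cref{rmk:unpack OmegaG}.

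The hardest part will be coherence in the presence of subgroup-change. A morphism in $\Omega_G$ may simultaneously alter the ambient subgroup, the leaf $H$-set, and the internal tree structure, so the two-layer oplax composition has to reproduce exactly these morphisms. Because moving between subgroups forces us from trees into forests, the definition of $\T^H_+$ must naturally receive the restriction/induction-type functors appearing in the forest setup, and the oplax constraints must be verified at this decorated level. Establishing these coherences is the main technical content of \cref{prop:TGH oplax}, and once they are in place \cref{theresult} follows from the iterated Grothendieck construction framework by direct comparison on objects and morphisms.
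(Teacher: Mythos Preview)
Your high-level plan matches the paper's: use \cref{prop:OmegaGH Grothendieck} for the outer layer and \cref{GOmega is GTree} for the inner one, then stack. Where you diverge is in how the coherence problem you flag is actually solved.

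You propose to index the inner layer directly by $\Fin_{H,*}^{\op}$ and the categories $\T^H$, and to assemble these over $\mathcal O_G^{\op}$ via some oplax functor $H\mapsto \Fin_{H,*}^{\op}$. The paper explicitly avoids this route: making $G/H\mapsto \Omega^H$ or $G/H\mapsto \Fin_{H,*}$ functorial on $\mathcal O_G^{\op}$ forces choices of coset representatives for $K/H$ whenever $H\leq K$, and the resulting coherences are unpleasant. The paper's fix is to replace $BH$ by the equivalent groupoid $\underline{G/H}$, and correspondingly to replace $\Fin_{H,*}$ by the category $\mathscr{F}_{G/H,*}$ of retractive finite $G$-sets over $G/H$, and $\T^H$ by $\T^{\underline{G/H}}$. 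With these replacements the assignment $G/H\mapsto \int_{\mathscr{F}_{G/H,*}^{\op}}\T^{\underline{G/H}}$ is an honest pseudofunctor on $\mathcal O_G^{\op}$, defined by pullback along $q\colon G/H\to G/K$ rather than by choosing conjugates. This is exactly the content behind \cref{constr:oplax TGH} and \cref{prop:TGH oplax} that you cite, but your description of those results as producing an oplax functor with values $\T^H_+(A)$ indexed on $\Fin_{H,*}$ misidentifies what is actually built there.

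Second, the paper does not invoke a Fubini-type principle to conclude. Instead it constructs, for each $G/H$, a functor $\eta_{G/H}\colon \int_{\mathscr{F}_{G/H,*}^{\op}}\T^{\underline{G/H}}\to \Omega^{\underline{G/H}}$, checks that it is an equivalence (via the commuting square involving \cref{prop:OmegaH vs OmegaGH}, \cref{GOmega is GTree}, and \cref{lem:TH vs TGH}), and then verifies that the $\eta_{G/H}$ assemble into a pseudonatural transformation of pseudofunctors $\mathcal O_G^{\op}\to\Cat$. The conclusion then follows from \cref{equivalence on Grothendiecks} applied to this pointwise equivalence, together with \cref{prop:OmegaGH Grothendieck}. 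So rather than substituting one Grothendieck construction inside another and appealing to a general iterated-Grothendieck comparison, the paper compares two pseudofunctors on $\mathcal O_G^{\op}$ directly. Your outline is not wrong in spirit, but the concrete mechanism is different, and the $\underline{G/H}$ reformulation is the step that makes the ``hardest part'' you identify actually tractable.
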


As this overview indicates, the Grothendieck construction is a critical tool throughout this paper, making many appearances in different forms.  Acknowledging that readers may have different levels of familiarity with this construction, and especially with the context of oplax functors, we include some essential definitions and necessary results in \cref{sec:cat background}.  We encourage readers either to start with this appendix, or to refer to it as needed, depending on their comfort with this material.

\subsection{Outline}

We begin in \cref{Gdend} by recalling the definition of the dendroidal category $\Omega$ and detailing its generating morphisms. In \cref{sec:Omega is T}, we introduce the oplax functor $\T$ and prove \cref{introthm:non eq}.  Next, \cref{sec:trees with G action} tells the same story in the equivariant setting, introducing the relevant category $\Omega^G$ of trees with $G$-action in \cref{sec:Omega upper G} and modeling it as a Grothendieck construction in \cref{sec:Omega upper G is Groth}. Finally, in \cref{sec:Gtree}, we recall the definition of the equivariant dendroidal category $\Omega_G$. The main results, as summarized in \cref{introthm:eq}, can be found in \cref{sec:Omega lower G is Groth}.  The paper concludes with \cref{sec:cat background}, where we review oplax functors and their Grothendieck construction.

\subsection{Acknowledgments} 

This project was started when JB, MC, DC, and AO were visiting the Isaac Newton Institute for Mathematical Sciences, Cambridge, and thank them for support and hospitality during the programme Equivariant Homotopy Theory in Context; this work was supported by EPSRC grant number EP/R014604/1. Part of the work in this paper took place during our participation in the SQuaREs program at the American Institute of Mathematics, and we are very grateful for their support and for the wonderful work environment that they provided. JB was partially supported by NSF grant DMS-1906281,  DC was partially supported by NSF grant DMS-2135960, MC was partially supported by NSF grant DGE-1845298, AO was partially supported by NSF grant DMS–2204365, and MS was partially supported by NSF grant DMS-2506116.  

\section{Categories of trees} \label{Gdend}

In this section, we focus on categories of non-equivariant trees.  We start by recalling the dendroidal category $\Omega$.  We then show that $\Omega$ can be obtained as a Grothendieck construction of an oplax functor that takes values in categories of trees that have a fixed set of leaves.

\subsection{The category $\Omega$ of trees} \label{recap:dendroidal}

In this paper, by ``tree" we mean what is often referred to as an ``operadic tree"; in particular, edges need not have vertices at both ends.  The name is suggestive of the utility of such trees in describing operad structures.  The category of such trees is called the \emph{dendroidal category} and denoted by $\Omega$; we refer to the reader to the expository paper \cite{moerdijk} and the detailed treatment in the book \cite{heutsmoerdijk}.  We briefly recall the definition of $\Omega$ here.

More precisely, we take a \emph{tree} to be a non-planar graph with no cycles, where some edges meet only one vertex.  A tree must have at least one edge of this kind, specified as the \emph{root}; any other such edges are called \emph{leaves}.  The remaining edges that are attached to two vertices are called \emph{inner edges}.  
The data of the tree consists of both the graph and the choice of root.  For example, for any $n\geq 0$, the \emph{corolla} $C_n$ is the tree with one vertex and $n+1$ edges, one of which is specified as the root.  We also include the tree with a single root edge and no vertices.  

The choice of root determines a direction to the tree, where the leaves are regarded as inputs and the root is regarded as the output.  Consequently, every vertex $v$ has \emph{incoming edges}, whose number is given by $|v|$, the \emph{valence} of the vertex, as well as one \emph{outgoing edge} in the direction of the root.  We do allow \emph{nullary} or \emph{external} vertices, namely those that have no incoming edges, and sometimes refer to them as \emph{stumps}.  We also have cause to refer to \emph{unary} vertices, which have a single incoming edge.

In order to define the morphisms of $\Omega$, we need to describe the free colored operad $\Omega(S)$ determined by a tree $S$.  Its set of colors is the set of edges, and there is one generator for each vertex $v$ of $S$, whose arity is precisely the valence of the vertex. Choosing an order for the incoming edges $e_1, \dots, e_n$, the generator $v$ is an element of the set $\Omega(S)(e_1, \dots, e_n; e)$ where $e$ is the outgoing edge of $v$.  Observe that the operations in $\Omega(S)$ are given by those associated to the vertices and their composites, which can be depicted by contracting internal edges of $S$. 

\begin{definition}
    The category $\Omega$ is the category whose objects are trees and whose morphisms 
    $T \rightarrow S$ are given by the morphisms of colored operads $\Omega(T) \rightarrow \Omega(S)$.  
\end{definition}

In this paper, we do not explicitly use this description of the morphisms of $\Omega$ in terms of operads. Instead, we either use the decomposition of morphisms into specified generating morphisms, as we explain shortly, or the fact that a morphism of trees is entirely determined by its behavior on sets of edges.  
    
Specifically, a morphism $f\colon T\to S$ in $\Omega$ consists of a function $f$ from the set of edges of $T$ to the set of edges of $S$ satisfying the property that for any vertex $v$ of $T$ with incoming edges $e_1,\dots,e_n$ and outgoing edge $e_0$, there is a subtree of $S$ with root $f(e_0)$ and leaves $f(e_1),\dots,f(e_n)$. Following \cite[\S 1.3]{heutsmoerdijk}, we use the term ``subtree'' to refer only to trees $S'\subseteq S$ with the property that if a vertex $v$ is in $S'$ then every incoming edge of $v$ is also in $S'$. 

\begin{remark}\label{rmk:poset}
    We can also understand morphisms of trees in terms of the poset of its edges, where for two edges $x,y$ of a tree we write $x\leq y$ if the unique path from $x$ to the root contains $y$.  We say that $x$ and $y$ are \emph{comparable} if either $x\leq y$ or $y\leq x$ and \emph{incomparable} otherwise. Note that if there exists an edge $z$ such that $z\leq x$ and $z\leq y$, then $x$ and $y$ are comparable. Indeed, the unique path $P$ from $z$ to the root must pass through $x$ and $y$, and whichever edge $P$ passes through first is less than the other edge.  If $f\colon T\to S$ is a morphism of trees in $\Omega$ then the underlying map of edge sets is a map of posets.  This map of posets entirely determines the morphism of trees, but not every map of posets gives a map of trees. 
\end{remark}

We can gain a more precise understanding of the morphisms in $\Omega$ via a description by generators and relations, similarly to the description of the category $\Delta$ in terms of face and degeneracy maps, subject to the cosimplicial identities.  An interesting feature of these maps is that some of them have two different conceptual descriptions: one of the map itself and other in terms of its opposite. 

\begin{definition} 
    \begin{itemize}
        \item An \emph{inner face map} in $\Omega$ is the opposite of an inner edge contraction; we denote such a map by $\delta_e\colon T \to S$, where $e$ is the inner edge in $S$ that gets contracted.  
        
        \item An \emph{outer face map} is given by an inclusion of a tree into one obtained by grafting a corolla onto one of the leaves; we denote such a map by $\delta_v\colon T \to S$, where $v$ is the vertex in $S$ onto which the grafting occurs. Alternatively, $\delta_v$ can also be thought of as the opposite of the removal of the external vertex $v$ and its incoming leaves, a process sometimes referred to as \emph{pruning}.
        
        \item A \emph{degeneracy map} is given by removing a unary vertex and merging the adjacent edges; we denote such a map by $\sigma_e\colon T \to S$, where $e$ is the edge in $S$ resulting from the merge. Alternatively, $\sigma_e$ can be described as the opposite of splitting the edge $e$ by placing a unary vertex.
    \end{itemize}
\end{definition}

\begin{example}
    An example of an inner face map is given by
    \[
    \scalebox{0.85}{
    \begin{tikzpicture} 
    [level distance=10mm, 
    every node/.style={fill, circle, minimum size=.1cm, inner sep=0pt}, 
    level 1/.style={sibling distance=20mm}, 
    level 2/.style={sibling distance=20mm}, 
    level 3/.style={sibling distance=14mm},
    level 4/.style={sibling distance=7mm}]
    %first tree
    \node (tree) at (-3,0) [style={color=white}] {} [grow'=up] 
    child {node (level2) {} 
	child{ node (level3) {}
		child
		child
	}
	child
	child
    };
        %arrow
    \node[fill=white, label=$\longrightarrow$] at (-0.5,1) {};
    %second tree
    \node (tree) at (3,0) [style={color=white}] {} [grow'=up] 
    child {node (level1) {} 
	child{ node {}
		child
		child
	}
	child [draw=myblue, thick]{ node (level2) {}
		child [draw=black]
            child [draw=black]
	}
    };

    \tikzstyle{every node}=[]
    \end{tikzpicture}
    }
    \]
    which we think of as the opposite of the contraction of the blue edge.  
\end{example}

\begin{example}
    In contrast, an example of an outer face map is given by the inclusion of the subtree
    \[
    \scalebox{0.85}{
    \begin{tikzpicture} 
    [level distance=10mm, 
    every node/.style={fill, circle, minimum size=.1cm, inner sep=0pt}, 
    level 1/.style={sibling distance=20mm}, 
    level 2/.style={sibling distance=20mm}, 
    level 3/.style={sibling distance=14mm},
    level 4/.style={sibling distance=7mm}]
    %first tree
    \node (tree) at (-3,0) [style={color=white}] {} [grow'=up] 
    child {node (level2) {} 
	child{ node (level3) {}
		child
		child
	}
	child
	child
    };
    %arrow
    \node[fill=white, label=$\hookrightarrow$] at (-0.5,1) {};
    %second tree
    \node (tree) at (3,0) [style={color=white}] {} [grow'=up] 
    child {node (level1) {} 
	child{ node {}
		child
		child
	}
        child
        child{ node[color=myred] {}
    		child[draw=myred, thick]
    		child[draw=myred, thick]
                child[draw=myred, thick]
    	}
    };

    \tikzstyle{every node}=[]
    \end{tikzpicture}
    }
    \]
    below the red vertex and incoming leaves.  We often think of the second tree as obtained by \emph{grafting} a 3-corolla onto the appropriate leaf of the first.  Such an outer face map can also be thought of as the opposite of the pruning achieved by the removal of the red vertex and its incoming leaves.
\end{example}

\begin{example}\label{ex:root graft}
    Another kind of outer face map is the inclusion of a subtree in a way that does not preserve the root, as depicted in the example
    \[
    \scalebox{0.85}{
    \begin{tikzpicture} 
    [level distance=10mm, 
    every node/.style={fill, circle, minimum size=.1cm, inner sep=0pt}, 
    level 1/.style={sibling distance=20mm}, 
    level 2/.style={sibling distance=20mm}, 
    level 3/.style={sibling distance=14mm},
    level 4/.style={sibling distance=7mm}]
    %first tree
    \node (tree) at (-3,0) [style={color=white}] {} [grow'=up] 
    child {node (level2) {} 
	child{ node (level3) {}
		child
		child
	}
	child
	child
    };
    %arrow
    \node[fill=white, label=$\hookrightarrow$] at (-0.5,1) {};
    %second tree
    \node (tree) at (3,0) [style={color=white}] {} [grow'=up] 
    child[draw = myred] {node[color=myred] (level1) {} 
	child[draw = black]{ node {}
		child{ node {}
            child
            child}
		child
        child
	}
        child
    };

    \tikzstyle{every node}=[]
    \end{tikzpicture}
    }
    \] 
    in which the root is mapped to an inner edge.  In other words, this picture is the result of grafting the original tree onto one of the leaves of the corolla $C_2$ onto the root.  Alternatively, we can think of this map as the opposite of a pruning that removes the root and its adjacent vertex.
\end{example}

\begin{example}
    An example of a degeneracy map is given by
    \[
    \scalebox{0.85}{
    \begin{tikzpicture} 
    [level distance=10mm, 
    every node/.style={fill, circle, minimum size=.1cm, inner sep=0pt}, 
    level 1/.style={sibling distance=20mm}, 
    level 2/.style={sibling distance=20mm}, 
    level 3/.style={sibling distance=14mm},
    level 4/.style={sibling distance=7mm}]
    %first tree
    \node (tree) at (-3,0) [style={color=white}] {} [grow'=up] 
    child {node (level2) {} 
	child{ node (level3) {}
		child
		child
	}
	child{ node[color=myorange] (level3) {}
            child
        }
        child
    };

    %arrow
    \node[fill=white, label=$\longrightarrow$] at (-0.5,1) {};

    %second tree
    \node (tree) at (3,0) [style={color=white}] {} [grow'=up] 
    child {node (level2) {} 
	child{ node (level3) {}
		child
		child
	}
	child
        child
    };

    \tikzstyle{every node}=[]
    \end{tikzpicture}
    }
    \]
    which removes the indicated unary vertex.
\end{example}

Finally, we discuss isomorphisms in $\Omega$. A map $T \rightarrow S$ of trees is an \emph{isomorphism} if it is an isomorphism of the underlying graphs, that is, it is bijection on edges and vertices that preserves incidence. For more details on this definition, see \cite[\S3.1]{heutsmoerdijk}.

\begin{example}
    An example of an isomorphism of trees is given by
    \[
    \scalebox{0.85}{
    \begin{tikzpicture} 
    [level distance=10mm, 
    every node/.style={fill, circle, minimum size=.1cm, inner sep=0pt}, 
    level 1/.style={sibling distance=20mm}, 
    level 2/.style={sibling distance=20mm}, 
    level 3/.style={sibling distance=14mm},
    level 4/.style={sibling distance=7mm}]
    %first tree
    \node (tree) at (-3,0) [style={color=white}] {} [grow'=up] 
    child {node (level2) {} 
	child{ node (level3) {}
		child[draw = myred, thick]
		child[draw = mypurple, thick]
	}
	child[draw = myblue, thick]
	child[draw = myyellow, thick]
    };
    %arrow
    \node[fill=white, label=$\xrightarrow{\cong}$] at (-0.5,1) {};
    %second tree
    \node (tree) at (3,0) [style={color=white}] {} [grow'=up] 
    child {node (level2) {} 
	child{ node (level3) {}
		child[draw = mypurple, thick]
		child[draw = myred, thick]
	}
	child[draw = myyellow, thick]
	child[draw = myblue, thick]
    };
    \end{tikzpicture}
    } 
    \] 
    where the corresponding colors indicate the bijection between the leaves.
\end{example}

The morphisms in $\Omega$ are generated by the inner face maps, outer face maps, and degeneracy maps, as well as isomorphisms, subject to certain relations.  We refer the reader to \cite{heutsmoerdijk} or \cite{moerdijk} for more details, and simply state the following result.

\begin{proposition}[{\cite[3.9, 3.10]{heutsmoerdijk}}] \label{istrue} 
    Any morphism $f \colon T \rightarrow S$ factors uniquely as $\delta_o \circ \delta_i \circ \zeta \circ \sigma$, where $\sigma$ is a composite of degeneracy maps, $\zeta$ is an isomorphism, $\delta_i$ is a composite of inner face maps, and $\delta_o$ is a composite of outer face maps.
\end{proposition}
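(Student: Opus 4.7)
My plan is to construct the factorization explicitly by unpacking the action of $f$ on edges, and then to establish uniqueness by pinning down each piece via a combinatorial invariant of $f$.

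First, to extract $\sigma$, I would examine which edges of $T$ are identified under $f$. The claim to verify is that two edges $a,b$ of $T$ satisfy $f(a)=f(b)$ if and only if they are connected by a chain of unary vertices of $T$ whose two adjacent edges are each collapsed by $f$; this uses the subtree condition on vertex images together with the poset description of edges to rule out other coincidences. Collapsing all such unary vertices yields a tree $T_1$ and a composite of degeneracies $\sigma\colon T\to T_1$, through which $f$ factors as an edge-injective map $f_1\colon T_1\to S$.

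Second, each vertex $v$ of $T_1$ is assigned by $f_1$ a subtree $R_v\subseteq S$ whose root and leaves are the images of the edges incident at $v$. Gluing the $R_v$ along shared edges produces a subtree $T_3\subseteq S$, and $T_1$ is abstractly isomorphic to the tree obtained from $T_3$ by contracting every inner edge that lies strictly inside some $R_v$. Fixing such an isomorphism supplies $\zeta\colon T_1\to T_2$, where $T_2$ has the same shape as $T_1$ but carries edge labels from $S$. The expansion $T_2\hookrightarrow T_3$ uncontracts only interior edges and so is a composite of inner face maps $\delta_i$, while the inclusion $T_3\hookrightarrow S$ attaches the remaining external structure (added vertices and leaves, or a new root) and so is a composite of outer face maps $\delta_o$.

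The main obstacle is carving out the distinction between inner and outer face maps cleanly and showing that the intermediate factorization is unique. The essential observation is that inner face maps preserve the set of leaves and the root while outer face maps strictly alter this external boundary, so the two operations never interfere: $T_3$ is forced to be the unique subtree of $S$ obtained by gluing the $R_v$, which determines both $\delta_i$ and $\delta_o$. Uniqueness of $\sigma$ follows because its edge equivalence relation is intrinsic to $f$, and $\zeta$ is then the unique isomorphism of labeled trees produced by $f_1$. A routine check that $\delta_o\circ\delta_i\circ\zeta\circ\sigma=f$ on edges and vertices finishes the argument.
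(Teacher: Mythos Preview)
The paper does not give its own proof of this proposition; it simply records the statement with a citation to \cite[3.9, 3.10]{heutsmoerdijk}. That said, your outline is essentially the standard argument found there, and it mirrors exactly the strategy the paper \emph{does} spell out for the equivariant analogue (\cref{istrueG}): factor $f$ on edge sets as a surjection followed by an injection, recognize the surjection as a composite of degeneracies via the ``adjacent edges joined by a unary vertex'' observation, and then split the injective part as an isomorphism onto the subtree of $S$ generated by the image, followed by the inclusion of that subtree, with the first piece a composite of inner faces and the second a composite of outer faces.
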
  

\subsection{The category $\Omega$ as a Grothendieck construction}\label{sec:Omega is T}

In this section, we show how the dendroidal category $\Omega$ can be alternatively described as the Grothendieck construction of a certain oplax functor.   We encourage readers unfamiliar with oplax functors or the Grothendieck construction to refer to \cref{sec:cat background} before proceeding.

To describe this functor, we start by introducing the following category.

\begin{definition}\label{defn:T(n)} 
    For any $n \geq 0$, let $\T(\underline{n})$ be the category of trees with leaves labeled by $\underline{n}=\{1, 2, \ldots, n\}$ and whose morphisms are generated by inner faces and degeneracies, subject to the relations between them in $\Omega$.  In particular, all morphisms in $\T(\underline{n})$ preserve the labeling on the leaves.
\end{definition}

\begin{remark}
    This category $\T(\underline{n})$ is not quite the same as the one described in \cite{BBCCM:trees}.  There, the corollas were omitted in order to avoid having contractible nerve.  Furthermore, because trees were required to be reduced, it was not possible to have degeneracy maps.  Thus, the category we have described here is an appropriate expansion that includes all trees with $n$ leaves and morphisms that preserve the labeling on the leaves.
\end{remark}

Let $\Fin_*$ denote the skeletal category of finite pointed sets whose objects are given by $\underline{n}_+=\{+, 1, \ldots, n\}$ for $n \geq 0$ and whose morphisms are pointed functions.  We want to assign to every $\underline{n}_+$ the corresponding category $\T(\underline{n})$ in such a way that we can obtain $\Omega$ as a Grothendieck construction.  Let us first give an explicit description of this assignment. 

\begin{construction}\label{construction:oplaxT}
    Define $\T$ from the category $\Fin_*^{\op}$ to the category $\Cat$ of small categories and functors as follows.
    \begin{itemize}
        \item On objects, $\T$ maps $\underline{n}_+$ to the category $\T(\underline{n})$ of \cref{defn:T(n)}.
        
        \item On morphisms, $\T$ maps a function $\varphi \colon \underline{m}_+ \to \underline{n}_+$ in $\Fin_*$ to the functor $\T(\varphi)=\varphi^*\colon \T(\underline{n})\to \T(\underline{m})$ as follows.        
        \begin{itemize} 
            \item For a tree $T$ in $\T(\underline{n})$, construct $\varphi^*(T)$ by attaching a corolla labeled by $\varphi^{-1}(i)$ to the leaf in $T$ labeled by $i$ for each $i\in \underline{n}_+$.  In particular, if $\varphi^{-1}(i)=\varnothing$, the leaf in $T$ labeled by $i$ becomes a stump. Note that this includes the case in which $i=+$, where we attach the corolla labeled by $\varphi^{-1}(+)$ to the root. The new root is the edge labeled by $+\in\varphi^{-1}(+)$. 
            \item A morphism $f\colon T\to S$ in $\T(\underline{n})$ is sent to the morphism $\varphi^*(f)\colon\varphi^*(T)\to\varphi^*(S)$ given by $\varphi^*(f)(i)=i$ for all leaves $i\in\underline{m}$, and by $\varphi^*(f)(e)=f(e)$ for all edges $e$ previously present in $T$. 
        \end{itemize} 
    \end{itemize}
\end{construction}

A naive hope would be that this assignment $\underline{n}_+ \mapsto \T(\underline{n})$ defines a contravariant functor of which $\Omega$ is the Grothendieck construction.  Unfortunately, this construction is not quite a functor, as illustrated by the following explicit examples. 

\begin{example}
    Let us first give an example to illustrate why $\T$ does not preserve identity morphisms. Consider the identity function $\id_{\underline{2}_+} \colon \underline{2}_+ \to \underline{2}_+$, and let $T$ be the tree of $\T(\underline{2})$ depicted to the left in \cref{fig:idnotfunctorial}.
    \begin{figure}[h!]
    \[\scalebox{0.85}{
        \begin{tikzpicture} 
        [level distance=10mm, 
        every node/.style={fill, circle, minimum size=.1cm, inner sep=0pt}, 
        level 1/.style={sibling distance=20mm}, 
        level 2/.style={sibling distance=20mm}, 
        level 3/.style={sibling distance=14mm},
        level 4/.style={sibling distance=7mm}]
        %first tree
        \node (tree) at (-3,0) [style={color=white}] {} [grow'=up] 
        child {node  {} 
	   child{   {}}
	   child { {}}
        };
        \node[fill=white, label=$T$] at (-3,-0.5) {};
        \node[fill=white, label=$\id^*_{\underline{2}_+}(T)$] at (3,-1) {};
        %second tree
        \node (tree) at (3,0) [style={color=white}] {} [grow'=up] 
        child {node  {} 
        child {node  {} 
	    child{  node {}
        child { {}}}
	    child{  node {}
        child { {}}}
        }
         };
        \tikzstyle{every node}=[]
        \end{tikzpicture}
        } \]
        \caption{Non-functoriality: the identity}
        \label{fig:idnotfunctorial}
    \end{figure}
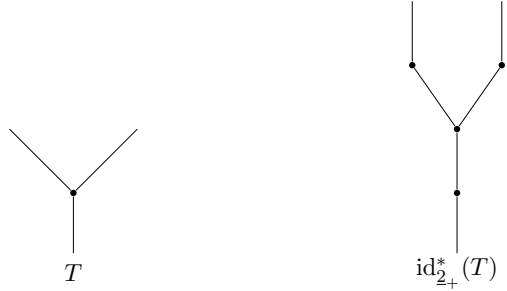
    According to \cref{construction:oplaxT}, the functor $\id_{\underline{2}_+}^* \colon \T(\underline{2}) \to\T(\underline{2})$ acts on $T$ by attaching a 1-corolla to each of the leaves and the root of $T$, resulting in the tree $\id_{\underline{2}_+}^*(T)$ depicted to the right in \cref{fig:idnotfunctorial}.  Since these trees are not the same, we can conclude that $\id_{\underline{2}_+}^*\neq\id_{\T(\underline{2})}$; hence $\T$ does not preserve identity morphisms. 

    The assignment $\T$ also fails to preserve composition. As an example, consider the pointed functions $\underline{3}_+ \xrightarrow{\gamma} \underline{5}_+ \xrightarrow{\varphi} \underline{4}_+$ given by $\gamma(1)=\gamma(2)=1$, $\gamma(3)=4$; and $\varphi(1)=1$, $\varphi(2)=\varphi(3)=2$, $\varphi(4)=\varphi(5)=3$. Consider the following tree $T$ in $\T(\underline{4})$.
    \[
    \scalebox{0.85}{
    \begin{tikzpicture} 
    [level distance=10mm, 
    every node/.style={fill, circle, minimum size=.1cm, inner sep=0pt}, 
    level 1/.style={sibling distance=20mm}, 
    level 2/.style={sibling distance=30mm}, 
    level 3/.style={sibling distance=14mm},
    level 4/.style={sibling distance=7mm}]
    %first tree
    \node (tree) at (0,0) [style={color=white}] {} [grow'=up] 
    child {node (level2) {} 
	child{  (level3) {}
	}
	child {node {} 
            child { {} }
            child { {} }
            child { {} }
            }	
    };
    \node[fill=white, label=$T$] at (0,-0.5) {};
  
    \tikzstyle{every node}=[]
    \end{tikzpicture}
    }
    \]
    Since $\varphi \circ \gamma(1) = \varphi \circ \gamma(2)=1$, $\varphi\circ\gamma(3)=3$, we have that $(\varphi\circ\gamma)^*(T)$ is obtained from $T$ by attaching a 2-corolla to the first leaf, a 1-corolla to the third leaf and the root, and stumps to the second and fourth leaves. The result is the tree depicted on the left in \cref{fig:compnotfunctorial}, where the added nodes and edges are colored in orange.
    \begin{figure}[h!]
    \[
    \scalebox{0.85}{
    \begin{tikzpicture} 
    [level distance=10mm, 
    every node/.style={fill, circle, minimum size=.1cm, inner sep=0pt}, 
    level 1/.style={sibling distance=60mm}, 
    level 2/.style={sibling distance=40mm}, 
    level 3/.style={sibling distance=20mm},
    level 4/.style={sibling distance=10mm}]
    %first tree
    \node (tree) at (-3,0) [style={color=white}] {} [grow'=up] 
    child[draw=myorange, thick] {node[color=myorange]  {} 
    child[draw=black] {node (level2) {} 
	child{ node[color=myorange] (level3) {}
		child [draw=myorange, thick]
		child [draw=myorange, thick]
	}
	child {node {} 
            child {node[color=myorange] {} }
            child {node[color=myorange] {} child [draw=myorange, thick]}
            child {node[color=myorange] {} }
            }	
        }
    };
    %arrow
    %    \node[fill=white, label=$\longrightarrow$] at (0.5,1) {};
    %    \node[fill=white, label=$\beta$] at (0.5,1.5) {};
    \node[fill=white, label=$(\varphi\circ\gamma)^*(T)$] at (-3,-1.3) {};
    \node[fill=white, label=$\gamma^*\circ\varphi^*(T)$] at (3,-1.3) {};
    %second tree
    \node (tree) at (3,0) [style={color=white}] {} [grow'=up]
    child [draw=myred, thick] {node[color=myred] {}
    child [draw=myblue, thick] {node[color=myblue] {}
    child [draw=black] {node {} 
	child [level 2] { node[color=myblue] {}
            child [draw=myblue, thick, level 4] {node[color=myred] {}
		child [draw=myred, thick]
		child [draw=myred, thick]}
	}
	child [level 2] {node {} 
            child [level 3] {node[color=myblue] {}  
                child [draw=myblue, thick, level 4] {node[color=myred] {} }
                child [draw=myblue, thick, level 4] {node[color=myred] {}}
                }
            child [level 3] {node[color=myblue] {} 
                child [draw=myblue, thick, level 4] {node[color=myred] {}
                    child [draw=myred, thick]}
                child [draw=myblue, thick, level 4] {node[color=myred] {}}
                }
            child [level 3] {node[color=myblue] {} }
            }	
        }
        }
    };
    \tikzstyle{every node}=[]
    \end{tikzpicture}
    }
    \]
    \caption{Non-functoriality: composition}
        \label{fig:compnotfunctorial}
    \end{figure}
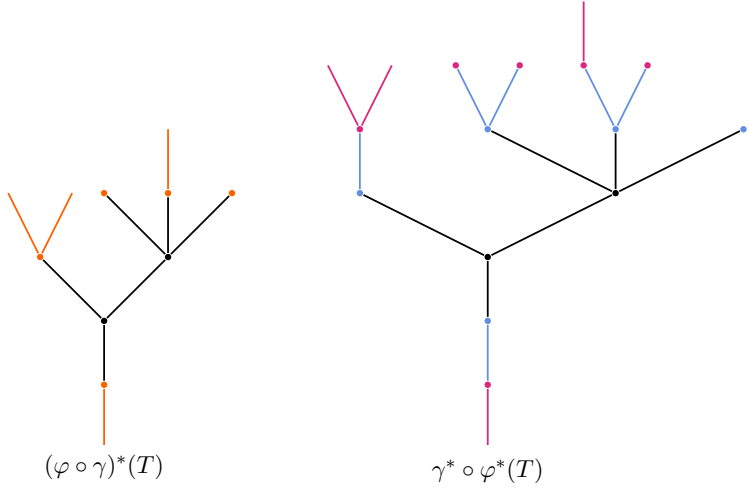
    
    On the other hand, the tree $\gamma^*\circ\varphi^*(T)$ is obtained from $T$ by first attaching corollas according to the action of $\varphi^*$, which is depicted in blue in the tree on the right in \cref{fig:compnotfunctorial}, and then attaching corollas according to $\gamma^*$, depicted in red. Again, we observe from this figure that $(\varphi\circ\gamma)^*(T)\neq \gamma^*\circ\varphi^*(T)$; hence $\T$ does not preserve composition.    
\end{example}

Although the assignment $\T\colon \Fin_*^{\op} \to \Cat$ does not strictly preserve composites and identities, the next result shows that it gives rise to an oplax functor, which is sufficient for our eventual goal of considering its Grothendieck construction. 

\begin{proposition}\label{prop:functorT}
    The assignment $\T\colon \Fin_*^{\op} \to \Cat$ of \cref{construction:oplaxT} defines an oplax functor.
\end{proposition}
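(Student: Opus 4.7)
An oplax functor $\Fin_*^{\op}\to \Cat$ requires, in addition to the assignments of \cref{construction:oplaxT}, a unit $2$-cell $\epsilon_n\colon \id_{\underline n_+}^*\Rightarrow \id_{\T(\underline n)}$ for every object and a composition $2$-cell $\mu_{\gamma,\varphi}\colon (\varphi\gamma)^*\Rightarrow \gamma^*\varphi^*$ for every composable pair $\underline k_+\xrightarrow\gamma\underline m_+\xrightarrow\varphi\underline n_+$, subject to naturality and to the triangle and pentagon axioms. I will construct these $2$-cells in terms of the generating morphisms of $\Omega$ and verify the axioms at the level of edges, using the fact from \cref{rmk:poset} that a morphism in $\Omega$ is determined by its underlying map of edges.

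Since $\id_{\underline n_+}^*(T)$ is $T$ with a $1$-corolla grafted at each leaf and at the root, I define $(\epsilon_n)_T$ as the composite of the $n+1$ degeneracies $\sigma_e$ removing the resulting unary vertices. The degeneracies act on disjoint edges and so commute, making the composite unambiguous; naturality in $T$ is verified directly at the edge level, using that morphisms of $\T(\underline n)$ preserve leaf labels. For the compositor, I define $\mu_{\gamma,\varphi}(T)\colon (\varphi\gamma)^*(T)\to \gamma^*\varphi^*(T)$ via its edge map: the edges of $T$ and each edge labeled by $(\varphi\gamma)^{-1}(j)\subset \underline k$ go to their namesakes in the target, while the intermediate edges of $\gamma^*\varphi^*(T)$ labeled by $\varphi^{-1}(j)\subset\underline m$ have no pre-image. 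The subtree condition is nontrivial only at the corollas attached to the leaves of $T$: at a leaf $j$ the required subtree of $\gamma^*\varphi^*(T)$ is the two-level grafting of $C_j$ with the family $\{C_i\}_{i\in\varphi^{-1}(j)}$, including stumps for any $i$ with $\gamma^{-1}(i)=\varnothing$. Equivalently, $\mu_{\gamma,\varphi}(T)$ is a composite of inner face maps that, for each such $j$ and each $i\in\varphi^{-1}(j)$, split the single corolla at $j$ to introduce the new intermediate edge $i$ topped by $C_i$, which confirms that $\mu_{\gamma,\varphi}(T)$ lies in $\T(\underline k)$; naturality in $T$ is again immediate on edges.

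The two oplax axioms reduce to edge-level identities. The triangle axiom, for a composable pair involving an identity, says that the degeneracies of $\epsilon$ and the inner faces of $\mu$ cancel on complementary pairs of edges; this is direct. The pentagon axiom, for three composable morphisms $\underline l_+\xrightarrow\delta\underline k_+\xrightarrow\gamma\underline m_+\xrightarrow\varphi\underline n_+$, asserts that the two iterated compositors $(\varphi\gamma\delta)^*\Rightarrow\delta^*\gamma^*\varphi^*$ agree; both paths send the edges of $T$ and the edges labeled by $\underline l$ to their namesakes and leave the intermediate edges labeled by $\underline m$ and $\underline k$ out of the image, so by \cref{rmk:poset} they coincide. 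The main technical obstacle I anticipate is verifying the subtree condition for $\mu_{\gamma,\varphi}$ when $\gamma$ has empty fibers, as this forces some of the constituent inner face maps to introduce a stump on the newly created intermediate edge; care is needed to ensure these splits are valid and assemble into a morphism of $\T(\underline k)$.
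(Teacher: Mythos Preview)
Your proposal is correct and follows essentially the same approach as the paper: both define the unit $2$-cell via the degeneracies collapsing the added $1$-corollas, and the compositor via the inner face maps that insert the intermediate edges labeled by $\underline m$ (equivalently, contract them in the target). The only stylistic difference is that the paper verifies naturality and the coherence axioms by appealing to the dendroidal relations among face and degeneracy maps, whereas you verify everything directly at the level of edge maps using \cref{rmk:poset}; your concern about empty fibers of $\gamma$ is not an actual obstruction, since an edge topped by a stump is still an inner edge and its contraction is a legitimate inner face map.
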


\begin{proof}
    To show that $\T$ is an oplax functor, we must provide the data of natural transformations 
    \[\tau_n\colon (\id_{\underline{n}_+})^* \Rightarrow \id_{\T(\underline{n})} \quad \text{and} \quad \tau_{\gamma,\varphi}\colon(\varphi \circ \gamma)^* \Rightarrow \gamma^* \circ \varphi^*\]
    for all $\underline{n}_+$ and all maps $\underline{\ell}_+ \xrightarrow{\gamma} \underline{m}_+ \xrightarrow{\varphi} \underline{n}_+$ in $\Fin_*$, and then show that they satisfy the coherence conditions from \cref{defn:oplax}. 

    To define $\tau_n$, given a tree $T$ in $\T(\underline{n})$, observe that the identity map $\id_{\underline{n}_+}^*(T)$ is obtained from $T$ by attaching a single edge to each leaf and the root; an example of this process can be seen in \cref{fig:idnotfunctorial}. Thus, $(\tau_n)_T\colon \id_{\underline{n}_+}^*(T)\to T$ is given by the composite of the degeneracies that remove the unary vertices that were added.  The naturality of $\tau_n$ follows from the compatibility of inner face maps, which only affect inner edges, and degeneracies, which eliminate unary vertices connected to leaves.

    Next, given a composite $\underline{\ell}_+ \xrightarrow{\gamma} \underline{m}_+ \xrightarrow{\varphi} \underline{n}_+$ in $\Fin$ and a tree $T$ in $\T(\underline{\ell})$, to define $\tau_{\gamma,\varphi}$ we start by describing the tree $\gamma^*\circ\varphi^*(T)$. This tree is obtained from $T$ by first attaching a corolla labeled by $\varphi^{-1}(i)$ to the leaf (or root) in $T$ labeled by $i$ for each $i\in\underline{n}_+$. Then, to each leaf (or root) $j$ in the corolla $\varphi^{-1}(i)$, we attach a new corolla labeled by $\gamma^{-1}(j)$; see \cref{fig:compnotfunctorial} for an example. In particular, note that the second part of this process turns each edge in the corollas added by $\varphi^*$  into an inner edge. The map $(\tau_{\gamma, \varphi})_T\colon (\varphi\circ\gamma)^*(T)\to \gamma^*\circ\varphi^*(T)$ is given by the composite of the inner faces corresponding to the contraction of all these newly created inner edges. In the picture of \cref{fig:compnotfunctorial}, this map corresponds to contracting all blue edges.  Naturality of $\tau_{\gamma,\varphi}$ follows from the relations that hold for composites of inner faces and of inner faces with degeneracies. 

    It remains to check that these natural transformations satisfy the required compatibilities encoding unitality and associativity. For unitality, we must check that for any $\varphi\colon\underline{m}_+\to\underline{n}_+$, the triangles 
    \begin{equation}\label{eq:unitality}
        \begin{tikzcd}[column sep=small]
            \varphi^*=(\varphi\circ\id_{\underline{m}_+})^* \ar[r, equal]\ar[d, Rightarrow,"\tau_{\id,\varphi}"'] &  \varphi^*=\id_{\T(\underline{m})}\circ\varphi^*\\
            \id_{\underline{m}_+}^*\circ\varphi^*\ar[ur, Rightarrow, "\tau_{m}\cdot \varphi^*"']
        \end{tikzcd}\qquad 
        \begin{tikzcd}[column sep=small]
            \varphi^*=(\id_{\underline{n}_+}\circ\varphi)^* \ar[r, equal]\ar[d, Rightarrow,"\tau_{\varphi, \id}"'] &  \varphi^*=\varphi^*\circ \id_{\T(\underline{n})}\\
            \varphi^*\circ\id_{\underline{n}_+}^*\ar[ur, Rightarrow, "\varphi^*\cdot\tau_{n}"']
        \end{tikzcd}
    \end{equation}
    commute, where $\tau_{m}\cdot \varphi^*$ denotes the whiskering of the natural transformation $\tau_{m}$ with the functor $\varphi^*$; see \cref{defn:whiskering}.   Applying the definitions of these natural transformations, we find that the commutativity of these triangles is due to the fact that the composite
    \[ \scalebox{0.85}{
    \begin{tikzpicture} 
        [level distance=10mm, 
        every node/.style={fill, circle, minimum size=.1cm, inner sep=0pt}, 
        level 1/.style={sibling distance=20mm}, 
        level 2/.style={sibling distance=30mm}, 
        level 3/.style={sibling distance=14mm},
        level 4/.style={sibling distance=7mm}]
        %first tree
        \node (tree) at (-3,0) [style={color=white}] {} [grow'=up] 
        child {node  {} 
	       child
	       };
        %arrow
        \node[fill=white, label=$\longrightarrow$] at (1.5,0.5) {};
        \node[fill=white, label=$\longrightarrow$] at (-1.5,0.5) {};
        %second tree
        \node (tree) at (0,0) [style={color=white}] {} [grow'=up] 
        child {node  {} 
	       child {node {} 
            child}
    	};
        %third tree
        \node (tree) at (3,0) [style={color=white}] {} [grow'=up] 
        child {node  {} 
	       child};
        \tikzstyle{every node}=[]
    \end{tikzpicture} } \]
    of the inner face corresponding to contracting the edge in the middle, followed by the degeneracy that removes one of the unary vertices, is equal to the identity.

    The associativity condition  requires that, for each composite 
    \[ \underline{k}_+ \xrightarrow{\delta} \underline{l}_+ \xrightarrow{\gamma} \underline{m}_+ \xrightarrow{\varphi} \underline{n}_+\] 
    in $\Fin_*$, the diagram 
    \begin{equation}\label{eq:associativity}\begin{tikzcd}
        (\varphi\circ\gamma\circ\delta)^*\rar[Rightarrow, "\tau_{\gamma\circ\delta, \varphi}"]\dar[Rightarrow,"\tau_{\delta, \varphi\circ\gamma}"'] & (\gamma\circ\delta)^*\circ\varphi^*\dar[Rightarrow,"\tau_{\delta,\gamma}\cdot\varphi^*"] \\
        \delta^*\circ (\varphi\circ\gamma)^*\rar[Rightarrow, "\delta^*\cdot\tau_{\gamma, \varphi}"'] & \delta^*\circ\gamma^*\circ\varphi^*
    \end{tikzcd}\end{equation}
    commutes, which follows directly from the relations regarding composites of inner faces.
\end{proof}

Now, we can apply the definition of the Grothendieck construction of an oplax functor from \cref{defn:grothconstruction} to obtain a category we denote by $\tree$. By the definition of the Grothendieck construction for an oplax functor \cref{prop:functorT}, the objects of $\tree$ consist of pairs $(\underline{n}_+,T)$ where $T$ is an object in $\T(\underline{n})$. A morphism $(\underline{n}_+,T) \to (\underline{m}_+, S)$ is given by a pair $(\varphi,f)$, consisting of a pointed function $\varphi\colon \underline{m}_+ \to \underline{n}_+$ in $\Fin_*$, and a morphism $f\colon \varphi^*(T) \to S$ in $\T(\underline{m})$. The identity morphism $\id_{(\underline{n}_+, T)} \colon (\underline{n}_+,T) \to (\underline{n}_+, T)$ is given by $(\id_{\underline{n}_+},(\tau_{n})_T)$. Given maps
\[(\underline{n}_+,T) \xrightarrow{(\varphi,f)} (\underline{m}_+, S) \xrightarrow{(\gamma,g)} (\underline{\ell}_+,R),\]
the composite is given by $\varphi\circ \gamma$ in the first coordinate, and by the composite 
\[ (\varphi\circ \gamma)^* (T) \xrightarrow{(\tau_{\gamma, \varphi})_T} \gamma^* \varphi^* (T) \xrightarrow{\gamma^* (f)} \gamma^* (S) \xrightarrow{g} R\]
in the second coordinate.

The fact that $\id_{(\underline{n}_+,T)}$ is actually an identity follows from unitality \eqref{eq:unitality}, and the fact that composition is associative for a string of composable maps $(\varphi,f), (\gamma,g), (\delta,h)$ follows from the naturality of $\tau_{\delta,\gamma}$ together with the associativity condition on $\tau$ \eqref{eq:associativity}.

\begin{theorem}\label{OmegaisTree}
    There is an equivalence of categories $F \colon \tree \xrightarrow{\simeq} \Omega$. 
\end{theorem}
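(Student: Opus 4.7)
The plan is to define the functor $F\colon \tree \to \Omega$ explicitly and then verify it is essentially surjective and fully faithful. On objects, I set $F(\underline{n}_+, T) := T$, forgetting the leaf-labeling. On a morphism $(\varphi, f)\colon (\underline{n}_+, T) \to (\underline{m}_+, S)$ in $\tree$, I define $F(\varphi, f) := f \circ \iota_\varphi$, where $\iota_\varphi\colon T \to \varphi^*(T)$ is the canonical inclusion of $T$ as a subtree of $\varphi^*(T)$. Unpacking \cref{construction:oplaxT}, this $\iota_\varphi$ is realized as a composite of outer face maps in $\Omega$: for each $i \in \underline{n}_+$, one grafts the corolla labeled by $\varphi^{-1}(i)$ at leaf $i$ of $T$ (or extends the root as in \cref{ex:root graft} when $i = +$), where the degenerate cases $|\varphi^{-1}(i)|\in \{0,1\}$ correspond to grafting a stump or a unary corolla, respectively.

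Functoriality splits into two checks. The identity condition $F(\id_{(\underline{n}_+, T)}) = \id_T$ holds because $(\tau_n)_T$ is precisely the composite of degeneracies removing the unary vertices inserted by $\iota_{\id_{\underline{n}_+}}$, so the dendroidal relations force $(\tau_n)_T \circ \iota_{\id_{\underline{n}_+}} = \id_T$ in $\Omega$. For composition, I expand both sides of $F((\gamma,g) \circ (\varphi,f)) = F(\gamma,g)\circ F(\varphi,f)$ and reduce to two identities in $\Omega$: the naturality square $\iota_\gamma \circ f = \gamma^*(f) \circ \iota_\gamma$, immediate from how $\gamma^*$ acts on morphisms in \cref{construction:oplaxT}, and $\iota_\gamma \circ \iota_\varphi = (\tau_{\gamma,\varphi})_T \circ \iota_{\varphi\circ\gamma}$, which expresses that both sides describe the same subtree inclusion $T \hookrightarrow \gamma^*\varphi^*(T)$ once the inner-edge contractions that $\tau_{\gamma,\varphi}$ performs between the two layers of grafted corollas have been applied.

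Essential surjectivity is immediate: every tree $T \in \Omega$ has finitely many leaves, and any choice of labeling yields a preimage $(\underline{n}_+, T)$ with $F(\underline{n}_+, T) = T$. For fully faithfulness I use the unique factorization $g = \delta_o \circ \delta_i \circ \zeta \circ \sigma$ of \cref{istrue}. Set $T''' := \delta_i \zeta \sigma(T)$, so that $T'''$ sits as a subtree of $S$ via $\delta_o$. Given labelings on $T$ and $S$, I construct $\varphi\colon \underline{m}_+ \to \underline{n}_+$ by tracing each leaf $i \in \underline{m}$ of $S$ back through $\delta_o$: either leaf $i$ is (the image of) a leaf of $T'''$, which via $\zeta \sigma$ corresponds to a unique leaf of $T$ labeled $\varphi(i)\in \underline{n}$, or leaf $i$ lies inside a corolla grafted by $\delta_o$, in which case $\varphi(i)$ is the label in $\underline{n}_+$ of the leaf (or root) of $T'''$ where the corolla is grafted; set $\varphi(+) := +$. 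With this $\varphi$, the tree $\varphi^*(T)$ carries the same grafted-corolla structure around $T$ as $S$ does around $T'''$, and one defines $f\colon \varphi^*(T)\to S$ as $\delta_i \circ \sigma$ on the $T$-portion and the identity on the grafted parts. This $f$ lives in $\T(\underline{m})$ because the iso $\zeta$ has been absorbed into the leaf-label permutation encoded by $\varphi$ (using that objects of $\T(\underline{m})$ are considered up to label-preserving iso). Uniqueness of $(\varphi, f)$ follows from uniqueness in \cref{istrue} together with injectivity of $\iota_\varphi$ on edges.

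The main obstacle is exactly this absorption of $\zeta$ into $\varphi$: since $\T(\underline{m})$ admits no nontrivial isomorphisms among its generators, all the leaf-permutation data carried by $\zeta$ must be captured by the pointed function $\varphi$. This is possible because any iso in $\Omega$ is determined by its underlying map on the edge poset (\cref{rmk:poset}) and, for trees with labeled leaves, by the induced permutation on labels; that permutation is precisely the one recovered by the construction of $\varphi$ from $\delta_o$ and the labelings. Secondary care is required for the edge cases where $\varphi^{-1}(i)$ is empty or a singleton and for the handling of the corolla grafted at the root, since these involve stumps, unary-vertex insertions, and root extensions (cf.\ \cref{ex:root graft}) and require repeated use of the degeneracy--face relations in $\Omega$ to verify that $f \circ \iota_\varphi$ reduces to the expected morphism.
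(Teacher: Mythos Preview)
Your setup---the definition of $F$, the verification that $F$ preserves identities and composition, and essential surjectivity---matches the paper's proof. The divergence is in full faithfulness: the paper establishes fullness one generator at a time (showing separately that each degeneracy, inner face, isomorphism, and outer face lies in the image of $F$, then composing via functoriality), whereas you attempt to lift an arbitrary $g = \delta_o\circ\delta_i\circ\zeta\circ\sigma$ in a single step.

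There is a genuine gap in your fullness construction. You assert that ``$\varphi^*(T)$ carries the same grafted-corolla structure around $T$ as $S$ does around $T'''$'' and accordingly let $f$ be ``the identity on the grafted parts.'' But $\delta_o$ is in general a \emph{composite} of outer face maps, so the portion of $S$ sitting above a given leaf $j$ of $T'''$ can be an arbitrary tree $R_j$, not a single corolla; meanwhile $\varphi^*$ by definition grafts exactly one corolla with leaf set $\varphi^{-1}(j)$ at each leaf. (Already when $T$ is the vertex-free edge and $S$ is any tree with at least two vertices, $\varphi^*(T)$ is the $m$-corolla, which does not match $S$.) The map $f$ you actually need must expand each grafted corolla of $\varphi^*(T)$ into the corresponding $R_j$ via further inner face maps; ``identity'' is not enough. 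A second, independent problem is your absorption of $\zeta$ into $\varphi$: this succeeds only when $\zeta$ is determined by its effect on leaves, which fails for trees admitting nontrivial label-preserving automorphisms (two stumps on a tree with no leaves, say)---and your parenthetical that objects of $\T(\underline{m})$ are taken ``up to label-preserving iso'' is not what \cref{defn:T(n)} says. Finally, for faithfulness, ``uniqueness in \cref{istrue}'' is a statement about the factorization of $g$, not about the pair $(\varphi,f)$; you still owe an argument that $\varphi$ itself is uniquely recoverable from $F(\varphi,f)$. The paper does this directly by reading $\varphi$ off the edge poset of $S$ (cf.\ \cref{rmk:poset}): $\varphi(j)=i$ exactly when the leaf $j$ of $S$ lies below the image of leaf $i$ of $T$.
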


\begin{proof}
    We define $F$ on objects by $F(\underline{n}_+,T)=T$.  Recall that a morphism $(\varphi, f) \colon (\underline{n}_+, T) \to (\underline{m}_+, S)$ in $\tree$ is given by a pointed function $\varphi\colon\underline{m}_+\to\underline{n}_+$ together with a map $f\colon \varphi^*(T)\to S$ in $\T(\underline{m})$, where $\varphi^*(T)$ is obtained from $T$ by attaching corollas as described in \cref{construction:oplaxT}.  There is a unique map $\iota_{\varphi,T}\colon T\to\varphi^*(T)$ that can be obtained as a composite of outer face maps in $\Omega$; while the decomposition into outer face maps itself is not unique, as they may be permuted, the composite $\iota_{\varphi,T}$ is. We define $F(\varphi, f)$ as the composite
    \[T \xrightarrow{\iota_{\varphi,T}} \varphi^* (T)\xrightarrow{f} S.\]

    To check that $F$ preserves identities, observe that $F(\id_{\underline{n}_+}, (\tau_n)_T)= (\tau_n)_T \circ\iota_{\id_{\underline{n}_+},T}$, which is precisely $\id_T$. To check that $F$ respects composition, consider two composable morphisms 
    \[ (\underline{n}_+,T) \xrightarrow{(\varphi,f)} (\underline{m}_+, S) \xrightarrow{(\gamma,g)} (\underline{\ell}_+, R)\] 
    in $\tree$. By definition, $F(\gamma, g)\circ F(\varphi,f)$ and $F\left((\gamma, g)\circ (\varphi,f)\right)$ are given by the top and bottom composites, respectively, in the diagram
    \[\begin{tikzcd}[row sep=tiny]
        & \varphi^*(T)\rar["f"]\ar[ddr,"\iota_{\gamma,\varphi^*(T)}"] & S\ar[dr,"\iota_{\gamma,S}"] & &\\
        T\ar[ur,"\iota_{\varphi,T}"]\ar[dr,"\iota_{\varphi\gamma,T}"'] & & & \gamma^*(S)\rar["g"] & R.\\
        & (\varphi\gamma)^*(T)\rar["\tau_T"'] & \gamma^*\varphi^*(T)\ar[ur,"\gamma^*(f)"']\\
    \end{tikzcd}\] 
    Thus it suffices to prove that the two quadrilaterals commute. The one on the left commutes because both composites are given by the inclusion of $T$ as a subtree of $\gamma^*\varphi^*(T)$. The one on the right commutes as the top composite applies the morphism $f$ to $\varphi^*(T)$ and then attaches the corollas corresponding to $\gamma$, while the composite of the bottom sequence attaches the corollas for $\gamma$, and then acts as $f$ on the original tree and as the identity on the corollas attached. 
    
    We now want to show that $F$ is an equivalence of categories.  Given any tree $T$ in $\Omega$ with $n$ leaves, any choice of labeling of the leaves of $T$ produces an element $(\underline{n}_+, T)$ of $\tree$ such that $F(\underline{n}_+,T)=T$, so $F$ is surjective on objects.  So, it remains to show that $F$ is fully faithful.
    
    To show that $F$ is full, consider objects $(\underline{n}_+, T)$ and $(\underline{m}_+,S)$ of $\tree$. Recall from \cref{istrue} that any morphism $f\colon T\to S$ in $\Omega$ can be uniquely decomposed as $f=\delta_o\circ \delta_i\circ \zeta\circ \sigma$, 
    where $\sigma$ is a composite of degeneracy maps, $\zeta$ is an isomorphism, $\delta_i$ is a composite of inner face maps, and $\delta_o$ is a composite of outer face maps.  
    Hence, it suffices to show fullness for each of these types of generators individually, as the result then follows from the functoriality and surjectivity on objects of $F$. 

    Suppose that $f\colon T\to S$ is a degeneracy map, in which case $T$ and $S$ have the same number of leaves, and $f$ is a map in $
    T(\underline{n})$.
    Then the morphism $(\id_{\underline{n}_+}, f\circ(\tau_n)_T)\colon (\underline{n}_+, T)\to (\underline{n}_+, S)$ in $\tree$ is sent by $F$ to the composite 
    \[ T \xrightarrow{\iota_{\id_{\underline{n}_+},T}} \id_{\underline{n}_+}^*(T)\xrightarrow{(\tau_n)_T} T\xrightarrow{f} S; \]    
    since $(\tau_n)_T\circ\iota_{\id_{\underline{n}_+},T}=\id_T$, it follows that $F(\id_{\underline{n}_+},f\circ(\tau_n)_T)=f$. An analogous argument can be used in the case when $f$ is an inner face map. 
    
    If $f\colon T\to S$ is an isomorphism, then $T$ and $S$ have the same underlying tree, $\underline{n}_+=\underline{m}_+$, and $f$ induces a bijection on leaves. Let $\varphi\colon\underline{n}_+\to\underline{n}_+$ be the pointed map determined by the restriction of the inverse of $f$ on leaves. Then $\varphi^*(T)$ is obtained from $T$ by adding a single edge to each leaf; let $d\colon \varphi^*(T)\to S$ be the composite of the degeneracies that contract the output edge of each of these unary vertices.  Then $(\varphi,d)\colon(\underline{n}_+, T)\to (\underline{n}_+, S)$ defines a morphism in $\tree$ such that $F(\varphi,d)=f$.

    Finally, suppose that $f\colon T\to S$ is an outer face map, so that $S$ is obtained from $T$ by grafting a corolla to either a leaf or to the root. For the purposes of this argument, we can think of the root as another leaf, and so always use the term ``leaf"; observe that the root is still distinguished by its label $+$. If $f$ grafts a $k$-corolla onto the $i$th leaf of $T$, then $f$ maps the leaf $i$ in $T$ to an inner edge in $S$ and every other leaf $j$ of $T$ to a leaf in $S$ labeled by $f(j)\in\underline{m}$.   We then define a  function $\varphi\colon\underline{m}_+\to\underline{n}_+$ by 
    \[\varphi(k)=\begin{cases}
        j & \text{ if } k=f(j) \text{ for some } j\in\underline{n}_+,\\
        i & \text{ otherwise}.
    \end{cases}\] 
    In the first case, such a leaf $j$ in $T$ is necessarily unique, so this function is well-defined.  Moreover, this function is pointed. Indeed, if $i\neq +$ and the corolla is attached to an actual leaf, then $f$ sends the root of $T$ to the root of $S$, and hence $\varphi(+)=+$. On the other hand, if $i=+$ and the corolla is attached to the root, then there is no $j\in \underline{n}_+$ such that $f(j)=+$, and in that case, $\varphi(+)=+$ as well. Now observe that $\varphi^*(T)$ is obtained from $T$ by attaching a $\varphi^{-1}(i)$-corolla to the $i$th leaf and a 1-corolla to every other leaf. Let $g\colon \varphi^*(T)\to S$ be the map in $\T(\underline{m})$ given by the composite of the degeneracies that contracts the output edge of each of these unary vertices. Then $(\varphi,g) \colon (\underline{n}_+,T) \to (\underline{m}_+, S)$ is a morphism in $\tree$ such that $F(\varphi,g)=f$. 

    It remains to show that $F$ is faithful.  Suppose  $(\varphi, f),(\psi,g)\colon (\underline{n}_+,T) \to (\underline{m}_+,S)$ are morphisms such that $F(\varphi, f)=F(\psi,g)$; in other words, that the square
    \[\begin{tikzcd}
        T\rar\dar & \varphi^*(T)\dar["f"]\\
        \psi^*(T)\rar["g"'] & S
    \end{tikzcd}\] 
    commutes.  We need to show that $\varphi=\psi$ and $f=g$.  

    First, we want to show that $\varphi=\psi$ as functions $\underline{m}_+\to \underline{n}_+$.  We claim that we can uniquely reconstruct $\varphi$ from the data of $F(\varphi, f)$; then the equality of $\varphi$ and $\psi$ follows from the assumption that $F(\varphi, f) = F(\psi, g)$. 
    
    For each $i\in \underline{n}$, let $e_i$ be the inner edge of $S$ that is the image of the leaf $i$ under the map $F(\varphi, f)\colon T\to S$ and let $M_i\subseteq \underline{m}_+$ be the subset of leaves $j$ of $S$ such that $j\leq e_i$. Note that if $i\neq i'$ then the leaves $i$ and $i'$ are incomparable in $T$ and hence $e_i$ and $e_{i'}$ are incomparable in $S$, since morphisms of trees preserve incomparability of edges. It follows that $M_i\cap M_{i'} = \varnothing$. Therefore, for every $j\in \underline{m}$, the leaf of $S$ labeled by $j$ belongs to at most one $M_i$. An inspection of the composite $T \to \varphi^*(T) \to S$ shows that $\varphi$ must be defined by $\varphi(j) = +$ if $j\not\in M_i$ for any $i\in I$, and otherwise $\varphi(j) = i$ for the unique $i\in \underline{n}$ with $j\in M_i$.  

    All that remains is to check that $f=g$.  To this end, note that it suffices to check that these maps induce the same map on both inner edges and on leaves.
    %\cite[\S3.2]{heutsmoerdijk}.   
    Since all the inner edges of $\varphi^*(T) = \psi^*(T)$ are in the image of the map $\iota_T\colon T\to \varphi^*(T)$ and $f\circ \iota_T = g\circ \iota_T$, we see that $f$ and $g$ agree on inner edges.  These maps also agree on all leaves by the definition of morphisms in $\mathcal{T}(\underline{m})$, hence $f$ and $g$ agree on all edges and must be the same function.
\end{proof}

\section{Categories of trees with $G$-action}\label{sec:trees with G action}

In this section, we consider categories of trees equipped with an action of a finite group $G$.  In analogy with our treatment of non-equivariant trees, we first consider the category $\Omega^G$ of $G$-objects in the category $\Omega$, and then consider categories of trees with $G$-action and specified $G$-set of labels for the leaves.  As in the non-equivariant case, we obtain the former category as a Grothendieck construction defined in terms of the latter categories.

\subsection{The category $\Omega^G$}\label{sec:Omega upper G}

If $G$ is a finite group, let $BG$ denote the one-object category whose morphisms are given by the group $G$.  Recall that if $\mathcal C$ is a category and $G$ is a finite group, the category $\mathcal C^G = \Fun(BG, \mathcal C)$ is the category of \emph{$G$-objects in $\mathcal C$}. A functor $X\colon BG\to \mathcal C$ encodes the data of an object $X(\ast)$ that has an action by $g\in G$ given by a morphism in $\mathcal C$ of the form $X(g)\colon X(\ast)\to X(\ast)$. 

\begin{definition}
    The category of \emph{trees with $G$-action} is $\Omega^G = \Fun(BG, \Omega)$. %An object of $\Omega^G$ is called a $G$-\emph{tree}.
\end{definition}

Explicitly, an object of $\Omega^G$ is a tree equipped with a $G$-action through root-preserving automorphisms that endow the sets of leaves, inner edges, and vertices with a $G$-action. There is a functor $\Omega^G\to \Omega$ that forgets the $G$-actions. An object of $\Omega^G$ is called a \emph{$G$-corolla} if its underlying tree in $\Omega$ is a corolla.
The following lemma gives a helpful characterization of when a morphism in $\Omega$ lifts to a morphism in $\Omega^G$.

\begin{lemma}\label{lem:equivariant maps in Omega}
    If $T$ and $S$ are objects in $\Omega^G$, then a morphism $f\colon T\to S$ in $\Omega$ is in $\Omega^G$ if and only if $f(g\cdot e) = g\cdot f(e)$ for all edges $e$ of $T$ and all $g\in G$.
\end{lemma}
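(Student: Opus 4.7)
The plan is to unpack the definition of $\Omega^G = \Fun(BG,\Omega)$ and use the fact from \cref{rmk:poset} that a morphism of trees in $\Omega$ is entirely determined by its underlying function on edge sets.

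For the forward direction, I would spell out what it means for $f\colon T\to S$ to be a morphism in $\Omega^G$. Viewing $T$ and $S$ as functors $BG\to \Omega$, a morphism between them is a natural transformation, which in this case is simply a morphism $f\colon T(*)\to S(*)$ in $\Omega$ such that, for every $g\in G$, the naturality square
\[
\begin{tikzcd}
T \ar[r,"f"] \ar[d,"g"'] & S \ar[d,"g"] \\
T \ar[r,"f"'] & S
\end{tikzcd}
\]
commutes in $\Omega$. Evaluating the commutativity of this square on any edge $e$ of $T$ yields exactly $f(g\cdot e) = g\cdot f(e)$.

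For the converse, I would argue that the equivariance condition on edges is enough to recover commutativity of the naturality square at the level of morphisms in $\Omega$. By \cref{rmk:poset}, a morphism in $\Omega$ is determined by its underlying map of edge sets, so the two composites $f\circ g$ and $g\circ f$ (as morphisms $T\to S$ in $\Omega$) agree as soon as they agree as functions on edge sets. The hypothesis $f(g\cdot e)=g\cdot f(e)$ gives precisely this equality on edges for each $g$, so each naturality square commutes, and hence $f$ assembles into a morphism in $\Omega^G$.

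There is no serious obstacle here; the only subtle point is justifying that equivariance on edges suffices to promote a morphism in $\Omega$ to a morphism in $\Omega^G$, which reduces to the standing principle (from \cref{rmk:poset} and the discussion after \cref{istrue}) that morphisms of trees are determined by their action on edges. I would invoke that principle explicitly rather than re-verifying that $f\circ g$ and $g\circ f$ both satisfy the subtree condition defining morphisms in $\Omega$.
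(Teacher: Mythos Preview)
Your proposal is correct and follows essentially the same argument as the paper: both reduce the naturality square in $\Omega^G$ to an equality of edge-set functions by invoking the fact (\cref{rmk:poset}) that morphisms in $\Omega$ are determined by their action on edges. The only cosmetic difference is that you separate the forward and converse directions explicitly, whereas the paper handles them together in a single paragraph.
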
 

\begin{proof}
    We need to check that for all $g\in G$, the diagram
    \[
    \begin{tikzcd}
        T \ar[r,"f"] \ar[d, swap, "g_T"] & S \ar[d,"g_S"] \\
        T \ar[r,swap, "f"] & S
    \end{tikzcd}
    \]
    commutes in $\Omega$.  Here, $g_T$ and $g_S$ denote the morphisms in $\Omega$ that witness the action of $g\in G$ on $T$ and $S$, respectively. As explained in \cref{rmk:poset}, morphisms in $\Omega$ are completely determined by what they do on edges, so it suffices to check that $f\circ g_T = g_S\circ f$ as functions on the edge sets of $T$ and $S$. That is, we need to know that $f(g\cdot e) = g\cdot f(e)$ for all edges $e$ in $T$ and all $g\in G$, which is true by assumption.
\end{proof} 

As in $\Omega$, the morphisms of $\Omega^G$ are generated by certain face and degeneracy maps. 

\begin{definition}
    A map $T\to S$ in $\Omega^G$ is a \emph{degeneracy} if there is an edge $e$ of $S$ so that $T$ is obtained from $S$ by splitting each of the edges $g\cdot e$ of $S$ for all $g\in G$ by adding a unary vertex, and the map witnesses the merger of these edges. We write $\sigma_{[e]}$ for this degeneracy.
\end{definition}

That is, a degeneracy removes an  orbit of unary vertices by merging the corresponding edges. Note that there is a factorization
\[\sigma_{[e]} =  \sigma_{g_1e}\circ \sigma_{g_2e} \circ\dots\circ\sigma_{g_ne}\] 
in $\Omega$, ranging over all elements $g_i e$ in the orbit of $e$, where $\sigma_{e}$ is the (non-equivariant) degeneracy map for the edge $g\cdot e$ of $S$. We may write this composite unambiguously, as the degeneracy maps $\sigma_{g\cdot e}$ and $\sigma_{g'\cdot e}$ commute for $g,g'$ such that $g\cdot e \neq g'\cdot e$. 

\begin{example}
    For $G=\mathbb{Z}/2$, \cref{fig:Gtree degen} gives an example of a degeneracy map, which we can think of as the opposite of splitting of edge the target by the inclusion of an orbit of unary vertices.  The $G$-fixed part of the tree is colored in black and the other $G/e$ orbits are indicated by different colors.   
    \begin{figure}
        \centering
    \[
    \scalebox{0.85}{
    \begin{tikzpicture} 
    [level distance=10mm, 
    every node/.style={fill, circle, minimum size=.1cm, inner sep=0pt}, 
    level 1/.style={sibling distance=20mm}, 
    level 2/.style={sibling distance=20mm}, 
    level 3/.style={sibling distance=14mm},
    level 4/.style={sibling distance=7mm}]
    %first tree
  \node (tree) at (-3,0) [style={color=white}] {} [grow'=up] 
    child {node (level2) {} 
	child{ node (level3) {}
		child[draw=myblue, thick]
		child[draw=myblue, thick]
	}
	child[draw=mypurple, thick]{ node[color=myorange] (level3) {}
            child[draw=myorange, thick]
        }
    child[draw=mypurple, thick]{ node[color=myorange] (level3) {}
            child[draw=myorange, thick]
        }
    };
    %arrow
    \node[fill=white, label=$\longrightarrow$] at (-0.5,1) {};

    %second tree
    \node (tree) at (3,0) [style={color=white}] {} [grow'=up] 
    child {node (level2) {} 
	child{ node (level3) {}
		child[draw=myblue, thick]
		child[draw=myblue, thick]
	}
	child[draw=mypurple, thick]
        child[draw=mypurple, thick]
    };

    \tikzstyle{every node}=[]
    \end{tikzpicture}
    }
    \]
    \caption{A degeneracy map between trees with $G$-action}
        \label{fig:Gtree degen}
    \end{figure}
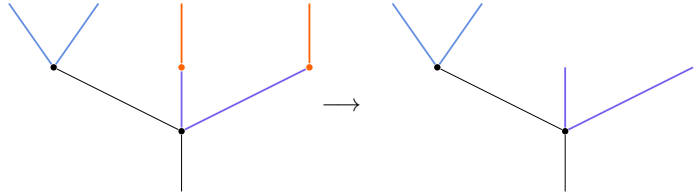
\end{example}

\begin{definition}
    A map $T\to S$ in $\Omega^G$ is an \emph{inner face map} if there is an edge $e$ in $S$ such that $T$ is obtained from $S$ by contracting the edges $g\cdot e$ of $S$ for all $g\in G$. We write $\delta_{[e]}$ for this inner face map.
\end{definition}

That is, an inner face map is the opposite of a contraction of an orbit of inner edges. As before, there is a factorization of $\delta_{[e]}$ as the composite of non-equivariant inner face maps $\delta_{ge}$ for all $g\cdot e$ in the orbit of $e$.

\begin{example}
Adopting the same coloring conventions as the previous example, an example of an inner face map is given by
    \[
    \scalebox{0.85}{
    \begin{tikzpicture} 
    [level distance=10mm, 
    every node/.style={fill, circle, minimum size=.1cm, inner sep=0pt}, 
    level 1/.style={sibling distance=20mm}, 
    level 2/.style={sibling distance=20mm}, 
    level 3/.style={sibling distance=14mm},
    level 4/.style={sibling distance=7mm}]
    %first tree
    \node (tree) at (-5,0) [style={color=white}] {} [grow'=up] 
    child {node (level2) {} 
	child{ node (level3) {}
		child[draw=myblue, thick]
		child[draw=myblue, thick]
	}
	child[draw=myred, thick]
	child[draw=myred, thick]
    child[draw=myyellow, thick]
	child[draw=myyellow, thick]
    };
    %arrow
    \node[fill=white, label=$\hookrightarrow$] at (-0.5,1) {};
    %second tree
    \node (tree) at (3,0) [style={color=white}] {} [grow'=up] 
    child {node (level1) {} 
	child{ node {}
		child[draw=myblue, thick]
		child[draw=myblue, thick]
	}
        child[draw=mypurple, thick]{ node[color=myred] {}
    		child[draw=myred, thick]
    		child[draw=myyellow, thick]
    	}
        child[draw=mypurple, thick]{ node [color=myred] {}
    		child[draw=myyellow, thick]
    		child[draw=myred, thick]
    	}
    };
    \end{tikzpicture}
    }
    \]
which we think of as the opposite of a contraction of the $G/e$-orbit of purple edges.
\end{example}

\begin{definition}
    A map $T\to S$ in $\Omega^G$ is an \emph{outer face map} if there is an external vertex $v$ in $S$ such that $T$ is obtained from $S$ by pruning the vertices $g\cdot v$  for each $g\in G$, together with their incoming leaves/root, and the map is given by the corresponding inclusion.  We write $\delta_{[v]}$ for this outer face map. 
\end{definition}

Thus an outer face map grafts the same corolla onto each leaf in an orbit. We emphasize that the same $G$-corolla must be grafted onto each leaf in the orbit, to ensure that the resulting tree has a well-defined $G$-action. Along the same lines, if the $G$-corolla is grafted onto the root, as in \cref{ex:equivar outer face}, then the grafting must merge the root with a $G$-fixed leaf of the $G$-corolla. As before, an outer face map $\partial_{[e]}$ in $\Omega^G$ may be written as a composite of non-equivariant outer face maps $\delta_{gv}$ in $\Omega$, ranging across all $gv$ in the orbit of $v$.

\begin{example}\label{ex:equivar outer face}
    Adopting the same conventions as in the previous examples, an example of an outer face map is given by
    \[
    \scalebox{0.85}{
    \begin{tikzpicture} 
    [level distance=10mm, 
    every node/.style={fill, circle, minimum size=.1cm, inner sep=0pt}, 
    level 1/.style={sibling distance=20mm}, 
    level 2/.style={sibling distance=20mm}, 
    level 3/.style={sibling distance=14mm},
    level 4/.style={sibling distance=7mm}]
    %first tree
    \node (tree) at (-3,0) [style={color=white}] {} [grow'=up] 
    child {node (level2) {} 
	child{ node (level3) {}
		child[draw=myblue, thick]
		child[draw=myblue, thick]
	}
	child[draw=myyellow, thick]
	child[draw=myyellow, thick]
    };
    %arrow
    \node[fill=white, label=$\hookrightarrow$] at (-0.5,1) {};
    %second tree
    \node (tree) at (3,0) [style={color=white}] {} [grow'=up] 
    child {node (level1) {} 
	child{ node {}
		child[draw=myblue, thick]
		child[draw=myblue, thick]
	}
        child[draw=myyellow, thick]{ node[color=myred] {}
    		child[draw=myred, thick]
    		child[draw=mypurple, thick]
    	}
        child[draw=myyellow, thick]{ node [color=myred] {}
    		child[draw=mypurple, thick]
    		child[draw=myred, thick]
    	}
    };
    \end{tikzpicture}
    }
    \]
    which we can think of as either an inclusion of a subtree or as the opposite of the removal of the orbit of red vertices and their incoming leaves. Another kind of outer face map grafts the root onto a corolla
    \[
    \scalebox{0.85}{
    \begin{tikzpicture} 
    [level distance=10mm, 
    every node/.style={fill, circle, minimum size=.1cm, inner sep=0pt}, 
    level 1/.style={sibling distance=20mm}, 
    level 2/.style={sibling distance=20mm}, 
    level 3/.style={sibling distance=14mm},
    level 4/.style={sibling distance=7mm}]
    %first tree
    \node (tree) at (-3,0) [style={color=white}] {} [grow'=up] 
    child {node (level2) {} 
	child{ node (level3) {}
		child[draw = myblue, thick]
		child[draw = myblue, thick]
	}
	child[draw = myorange, thick]
	child[draw = myorange, thick]
    };
    %arrow
    \node[fill=white, label=$\hookrightarrow$] at (0,1) {};
    %second tree
    \node (tree) at (5,0) [style={color=white}] {} [grow'=up] 
    child {node (level1) {} 
	child{ node {}
		child{ node {}
            child[draw = myblue, thick]
            child[draw = myblue, thick]}
		child[draw = myorange, thick]
        child[draw = myorange, thick]
	}
        child[draw = myred, thick]
        child[draw = myred, thick]
        child
    };

    \tikzstyle{every node}=[]
    \end{tikzpicture}
    }
    \]
    which we can think of either as an inclusion or as the opposite of the removal of the red vertex and incoming red leaves. We emphasize that the corolla onto which we have grafted has a leaf $G$-set isomorphic to $G/G\amalg G/e\amalg G/G$.
\end{example}

We now show that these equivariant faces and degeneracies generate the morphisms in $\Omega^G$. This proof closely follows the one for the non-equivariant version of this result, which can be found in \cite[Proposition 3.9, Proposition 3.10(2)]{heutsmoerdijk}.

\begin{proposition}\label{istrueG}
    Every morphism $f\colon T\to S$ in $\Omega^G$ can be factored as a composite $f = \delta_o \circ \delta_i \circ\alpha\circ\sigma$ of morphisms in $\Omega^G$, where $\sigma$ is a composite of degeneracy maps, $\alpha$ is an isomorphism, $\delta_i$ is a composite of inner face maps, and $\delta_o$ is a composite of outer face maps. 
\end{proposition}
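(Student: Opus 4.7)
The plan is to apply the non-equivariant factorization from \cref{istrue} to the underlying morphism of $f$ in $\Omega$, and then show that this factorization automatically refines to one in $\Omega^G$. First, \cref{istrue} provides a factorization $f = \delta_o \circ \delta_i \circ \zeta \circ \sigma$ of the underlying non-equivariant morphism, through intermediate trees
\[
T \xrightarrow{\sigma} T' \xrightarrow{\zeta} T'' \xrightarrow{\delta_i} T''' \xrightarrow{\delta_o} S
\]
in $\Omega$. The core claim is that each of $T'$, $T''$, $T'''$ inherits a canonical $G$-action for which every piece of the factorization is equivariant, and hence defines a morphism in $\Omega^G$ by the criterion of \cref{lem:equivariant maps in Omega}.

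I would construct these actions directly from the combinatorial description of each piece. The tree $T'$ is obtained from $T$ by absorbing those unary vertices whose input and output edges are identified by $f$; equivariance of $f$ makes this collection of vertices $G$-stable, so $T'$ carries a quotient $G$-action that makes $\sigma$ equivariant and keeps the induced map $T' \to S$ equivariant. The tree $T'''$ is the smallest subtree of $S$ through which $f$ factors, so its complement in $S$ is the $G$-stable union of external vertices grafted on by $\delta_o$; the restricted $G$-action from $S$ then makes $\delta_o$ equivariant. The tree $T''$ is obtained from $T'''$ by contracting the inner edges that are not in the image of the edge-injection $\delta_i$; this set of contracted edges is $G$-stable because $f$ and $\delta_o$ are, so $T''$ inherits a $G$-action making $\delta_i$ equivariant. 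Finally, since $\sigma$ is surjective on edges and $\delta_i$, $\delta_o$ are injective on edges, the equivariance of the three pieces $\sigma$, $\delta_i$, $\delta_o$ together with the equivariance of $f = \delta_o\delta_i\zeta\sigma$ forces $\zeta$ to be equivariant as well.

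Once every piece of the factorization is equivariant, it remains to repackage each into the generating morphisms of $\Omega^G$ defined in this subsection. Since $\sigma$ is an equivariant composite of non-equivariant degeneracies $\sigma_e$, the set of edges that get contracted is $G$-stable and hence a disjoint union of $G$-orbits; grouping by orbit rewrites $\sigma$ as a composite of the equivariant degeneracies $\sigma_{[e]}$. The analogous orbitwise grouping expresses $\delta_i$ as a composite of equivariant inner faces $\delta_{[e]}$ and $\delta_o$ as a composite of equivariant outer faces $\delta_{[v]}$, using equivariance of the underlying collections of contracted edges and pruned external vertices.

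The main obstacle I anticipate is the careful bookkeeping in the middle paragraph: producing the actions on $T'$, $T''$, $T'''$ so that all four maps are simultaneously equivariant, and in particular verifying that the isomorphism $\zeta$ is equivariant with respect to the two actions inherited from the outer pieces of the factorization. By contrast, the final orbitwise repackaging into the generators of $\Omega^G$ is essentially definitional once the intermediate trees are correctly equipped, and uniqueness is not needed beyond what is already established in \cref{istrue}.
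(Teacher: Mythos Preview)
Your proposal is correct and follows essentially the same route as the paper: both arguments reproduce the non-equivariant factorization of \cref{istrue} and verify that each intermediate tree inherits a $G$-action making every piece equivariant, then group the resulting generators into orbits to exhibit them as the equivariant faces and degeneracies defined in this subsection. The paper carries out the construction equivariantly from the outset, working orbit-by-orbit at each stage, whereas you invoke the non-equivariant result first and upgrade afterward; but the intermediate trees, the $G$-actions on them, and the combinatorial checks are identical, and your sandwich argument for the equivariance of $\zeta$ is in fact slightly more explicit than the paper's treatment of the isomorphism $\alpha$.
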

\begin{proof} 
    First, we consider the action of $f$ on underlying sets of edges, and factor this map as a surjection followed by an injection 
    \[E(T)\to (E(T)/\sim)\to E(S),\] 
    where $\sim$ is the equivalence relation in which $e_1\sim e_2$ if and only if $f(e_1)=f(e_2)$. Note that the fact that $f$ is an equivariant map implies that $E(T)/\sim$ is a $G$-set and both functions in this factorization are equivariant. Let $e_1$ and $e_2$ by two edges of $T$ such that $e_1\sim e_2$ and suppose that $e_1\leq e_2$ with no other edge in between.  As explained in \cite{heutsmoerdijk}, $e_1$ and $e_2$ are thus connected by a unary vertex $v$. Equivariance of $f$ then implies that $g\cdot e_1\sim g\cdot e_2$ and $g\cdot e_1$ and $g\cdot e_2$ are connected by the unary vertex $g\cdot v$. Then there is a degeneracy map $\sigma_{[e]}\colon T\to T_{e_1,e_2}$, where $T_{e_1,e_2}$ is the tree obtained by merging $g\cdot e_1$ with $g\cdot e_2$ and removing $g\cdot v$ for all $g$, and then labeling the new edge $g\cdot e$. Note that this choice of labeling is well-defined. Applying this reasoning to each equivalence class of edges produces a composite of equivariant degeneracy maps $\sigma\colon T\to T'$ with $E(T') \cong E(T)/\sim$. 

   Thus $f$ factors as $i\circ \sigma$ for some equivariant map $i\colon T'\to S$ that is injective on edges. This map $i$ further factors as an isomorphism $\alpha\colon T'\to T''$ in $\Omega^G$ followed by a map $\iota\colon T''\to S$ that is an inclusion of edges.  The map $\iota$ factors further as
    \[ T'' \xrightarrow{\zeta} S' \xrightarrow{\varphi} S, \] 
    where $S'$ is the subtree of $S$ generated by the image of $\iota$, $\zeta$ is $\iota$ but with target $S'$, and $\varphi$ is the inclusions of the subtree. At the level of edges all of these maps are $G$-equivariant inclusions.

    We claim that $\zeta$ is a composite of inner face maps. Recall that each vertex $v$ in $T''$ is mapped by the morphism $\zeta$ to a subtree of $S'$. If $\zeta$ maps every vertex in $T''$ to a corolla, then $\zeta$ must be the identity map, and we are done. Otherwise, there is a vertex $v$ in $T$ such that the subtree $\zeta(v)\subseteq S'$ has an inner edge $e$. Note that the fact that $\zeta$ is equivariant implies that every vertex $g\cdot v$ must also produce a subtree with an inner edge $g\cdot e$. We can then factor $\zeta\colon T''\to S'$ as the composite 
    \[T''\xrightarrow{\zeta'} \hat{T}\xrightarrow{\delta_{[e]}} S, \] 
    where $\delta_{[e]}$ is the corresponding equivariant inner face map. We can iterate this process on the inner edges in the subtree $\zeta(v)$, for each vertex $v$ in $T$, to obtain the desired decomposition of $\zeta$. 
     
    Lastly, we claim that $\varphi$ can be expressed as a composite of equivariant outer face maps. Informally, the idea is that since $S'$ is a $G$-subtree of $S$, we can reconstruct the rest of $S$ by attaching corollas to $S'$ one level at a time, first growing upward from the leaves of $S'$, and then growing downward from the root of $S'$.  To ensure that these attachments are done via equivariant outer faces, we must take care to attach these corollas in such a way that each step of the process respects the $G$-action.

    More concretely, let $v$ be an external vertex of $S$ such that $v\not\in S'$. Since $S'$ is a $G$-subtree, it follows that $g\cdot v \not\in S'$ for all $g\in G$, so $S'$ lacks the corollas attached to $g\cdot v$ for all $g\in G$, but possibly also other parts of $S$. Then $\varphi$ factors as
    \[S' \xrightarrow{\varphi'} S''\xrightarrow{\delta_{[v]}} S,\]
    where $\varphi'$ is again the inclusion of a subtree.
    We can repeat this process by taking an external vertex of the new tree $S''$ that is not in $S'$. This process ends once we have removed all the vertices of $S$ not in $S'$, as each eventually becomes an external vertex once enough corollas have been removed.
\end{proof}

\subsection{Categories of trees with $G$-action and leaves labeled by a $G$-set}\label{sec:Omega upper G is Groth}

We now introduce categories of trees with $G$-action whose leaf labels are given by a specific $G$-set.

\begin{definition} \label{def:T(A)} 
    Given a finite $G$-set $A$, an \emph{$A$-labeled tree with $G$-action} is an object in $\Omega^G$ equipped with an equivariant labeling bijection between $A$ and the $G$-set of leaves. A \emph{morphism} of $A$-labeled trees with $G$-action is a map in $\Omega^G$ that preserves the labels. Let $\T^G(A)$ denote the category of $A$-labeled trees with $G$-action and morphisms between them. 
\end{definition}

Observe that, just as in the non-equivariant case, the morphisms of $\T^G(A)$ are generated by equivariant inner face and degeneracy maps and isomorphisms, since the other generating morphisms in $\Omega^G$ necessarily change the leaves.

\begin{remark}\label{rmk:comparison with our trees}
    As in the non-equivariant case, note that $\T^G(A)$ is a slightly different category than its counterpart of the same name in \cite{BBCCM:trees}.  As in the non-equivariant case, the main difference lies in including the corollas and allowing unary vertices, enabling the inclusion of degeneracy maps.  There we also took the objects to be isomorphism classes of trees, whereas here we consider all trees.  
\end{remark}

\begin{example} \label{Gtree ex}
    Let $G = \{1,i,-1,-i\}$, and $A = \{x,ix,y,-y,iy,-iy\}$. The following is an example of two $A$-labeled trees with $G$-action, and a morphism between them in the category $\mathcal{T}^G(A)$:
    \[ \begin{tikzpicture} 
    [level distance=10mm, grow'=up, auto,
    every node/.style={font=\tiny},
    level 2/.style={sibling distance=26mm}, 
    level 3/.style={sibling distance=18mm},
    level 4/.style={sibling distance=7mm}]
    % first tree
    \node (tree) {}
        child {node [vertex] {} 
	        child{ node (xy) [vertex] {}
		        child[sibling distance=8mm] {edge from parent node {$a$}}
			    child[sibling distance=8mm] {edge from parent node [swap] {$ia$}}
			    edge from parent node {$b$}
	        }
    	    child{ node [vertex] {}
	    	    child{ node (xiy) [vertex] {}
		    	    child {edge from parent node {$c$}}
    			    child {edge from parent node [swap] {$-c$}}
    			    {edge from parent node {$d$}}
        		}
	        	child{ node (yiy) [vertex] {}
		        	child {edge from parent node {$ic$}}
			        child {edge from parent node [swap] {$-ic$}}
			        {edge from parent node [swap]{$id$}}
        		}
        		{edge from parent node [swap] {$e$}}
	        }
	        {edge from parent node {$r$}}
        };
    
    % second tree
    \tikzstyle{level 2}=[{sibling distance=12mm}]
    \tikzstyle{level 3}=[{sibling distance=12mm}]   
    \node (tree2) at (7,0) [style={color=white}] {}
        child{node (r) [vertex] {}
            child {edge from parent node [near end] {$a$}}
            child {edge from parent node [swap] {$ia$}}
            child{edge from parent [draw=none]}
            child[level distance=13mm]{node (s) [vertex] {}
                child[level distance=10mm] {edge from parent node [near end] {$c$}}
                child[level distance=10mm] {edge from parent node [very near end]{$-c$}} 
                child[level distance=10mm] {edge from parent node [swap, very near end] {$ic$}}
                child[level distance=10mm] {edge from parent node [swap, near end ]{$-ic$}}
                {edge from parent node [swap] {$e$}}
            }
            {edge from parent node {$r$}}
        };
    
    \tikzstyle{every node}=[]
    % arrow
    \node[label=$\longrightarrow$] at (3.5,10mm) {};
    % labels
    \node at ($(xy) + (4mm, 12mm)$){$ix$};
    \node at ($(xy) + (-4mm, 12mm)$){$x$};
    
    \node at ($(xiy) + (-4.5mm, 12.5mm)$){$y$};
    \node at ($(xiy) + (3.5mm, 12.5mm)$){$-y$};
    \node at ($(yiy) + (-4.5mm, 12.5mm)$){$iy$};
    \node at ($(yiy) + (3mm, 12.5mm)$){$-iy$};
    
    \node at ($(r) + (-18mm,12.5mm)$){$x$};
    \node at ($(r) + (-6mm,12.5mm)$){$ix$};
    \node at ($(s) + (-18mm,12.5mm)$){$y$};
    \node at ($(s) + (-6mm,12.5mm)$){$-y$};
    \node at ($(s) + (6mm,12.5mm)$){$iy$};
    \node at ($(s) + (18mm,12.5mm)$){$-iy$};
    \end{tikzpicture}. \]
\end{example}

In the remainder of this section, we prove an analogue of \cref{OmegaisTree} for $\Omega^G$. To describe the equivariant version of \cref{construction:oplaxT}, we also need to consider maps $T\to T'$ where $T$ is an $A$-labeled tree and $T'$ is a $B$-labeled tree for two non-isomorphic $G$-sets $A$ and $B$.  The idea for such morphisms is that they should correspond to the outer face maps in the dendroidal category $\Omega^G$. Thus, informally, they correspond to the procedure of grafting, but notationally, it is easier to describe these morphisms via the opposite procedure of pruning.  However, in the context of trees with $G$-action, this grafting or pruning needs to be applied uniformly to every vertex in the same $G$-orbit, just as we saw in the previous subsection.

Let $\Fin_{G,*}$ denote the category of finite pointed $G$-sets and whose maps are equivariant pointed functions. We may write an object of $\Fin_{G,*}$ as $A_+ = A\amalg G/G$ for some (unpointed) finite $G$-set $A$ and $+=G/G$ the basepoint.

\begin{construction}\label{constr:oplaxTG} 
    We construct an assignment $\T^G$ from $\Fin_{G,*}^{\op}$ to $\Cat$ as follows.
    \begin{itemize}
        \item On objects, $\T^G$ maps $A_+$ to the category $\T^G(A)$ of \cref{def:T(A)}. 
        
        \item On morphisms, given an equivariant function $\varphi\colon B_+\to A_+$ in $\Fin_{G,*}$, we define the functor $\T^G(\varphi) = \varphi^* \colon \T^G(A)\to \T^G(B)$ as follows.
        \begin{itemize}
            \item For an object $T$ in $\T^G(A)$, the tree $\varphi^*(T)$ is obtained from $T$ by replacing each leaf labeled by $a\in A_+$ with the corolla labeled by $\varphi^{-1}(a)$. This includes the case of $a=+$, where we attach the corolla labeled by $\varphi^{-1}(+)$ to the root, with the new root being the edge labeled by $+\in \varphi^{-1}(+)$.  We claim that $\varphi^*(T)$ is then an object in $\T^G(B)$:  the action of $G$ on $\varphi^*(T)$ extends the action of $G$ on $T$, and on leaves it is defined by the action of $G$ on $B$. 
            \item A morphism $f\colon T\to S$ in $\T^G(A)$ is  sent to the morphism $\varphi^*(f)\colon\varphi^*(T)\to\varphi^*(S)$ given by $\varphi^*(f)(a)=a$ for all leaves $a\in A$, and by $\varphi^*(f)(e)=f(e)$ for all edges $e$ previously present in $T$. Note that $\varphi^*f$ is equivariant, since $f$ and $\varphi$ are equivariant by assumption.
        \end{itemize}
    \end{itemize}
\end{construction}

As in the non-equivariant setting, this assignment is not functorial but rather defines an oplax functor.

\begin{proposition}\label{TG is oplax}
    The assignment $\mathcal{T}^G$ defines an oplax functor.
\end{proposition}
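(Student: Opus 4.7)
The plan is to follow the argument of \cref{prop:functorT} essentially verbatim, with the additional verification that every morphism constructed is $G$-equivariant, so it lives in $\Omega^G$ rather than merely in $\Omega$. The key point enabling this is that since the structure maps $\varphi\colon B_+\to A_+$ in $\Fin_{G,*}$ are equivariant, the corollas labeled by $\varphi^{-1}(a)$ are attached in $G$-orbits; consequently the unary vertices and inner edges produced by the construction always appear in $G$-orbits, so they can be eliminated by the equivariant degeneracies $\sigma_{[e]}$ and equivariant inner faces $\delta_{[e]}$ introduced in \cref{sec:Omega upper G}.

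First, for each finite $G$-set $A$, I would define the natural transformation $\tau_A\colon (\id_{A_+})^*\Rightarrow \id_{\T^G(A)}$ as follows. For $T\in\T^G(A)$, the tree $(\id_{A_+})^*(T)$ is obtained from $T$ by attaching a unary corolla to each leaf and to the root. The set of new unary vertices carries the $G$-action coming from $A_+$, so it partitions into $G$-orbits. The component $(\tau_A)_T$ is the composite of the equivariant degeneracies $\sigma_{[e]}$, one for each such orbit, that merge these edges back together. Naturality follows orbit-by-orbit from the corresponding naturality in the non-equivariant case \cref{prop:functorT}, using the factorization $\sigma_{[e]} = \sigma_{g_1 e}\circ\cdots\circ\sigma_{g_n e}$.

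Next, for each composable pair $C_+\xrightarrow{\gamma} B_+\xrightarrow{\varphi} A_+$ in $\Fin_{G,*}$, I would define $\tau_{\gamma,\varphi}\colon (\varphi\circ\gamma)^*\Rightarrow \gamma^*\circ\varphi^*$. Given $T\in \T^G(A)$, the tree $\gamma^*\varphi^*(T)$ is produced from $T$ by the two-step equivariant grafting: first attach a $\varphi^{-1}(a)$-corolla to the leaf labeled $a\in A_+$, and then attach a $\gamma^{-1}(b)$-corolla to each resulting leaf $b\in B_+$. The output edges of the corollas added in the first step become inner edges after the second step, and since both grafting operations are equivariant, these inner edges are partitioned into $G$-orbits. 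The component $(\tau_{\gamma,\varphi})_T$ is the composite of the equivariant inner face maps $\delta_{[e]}$ contracting these orbits, yielding the tree $(\varphi\circ\gamma)^*(T)$. Naturality again follows from the non-equivariant naturality applied orbit-by-orbit, using the factorization of $\delta_{[e]}$ into non-equivariant $\delta_{ge}$.

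Finally, I would verify the unitality and associativity coherences (the analogues of diagrams \eqref{eq:unitality} and \eqref{eq:associativity}). Because $\sigma_{[e]}$ and $\delta_{[e]}$ decompose into their non-equivariant constituents, each of the identities required for commutativity of these diagrams reduces, orbit-by-orbit, to the identity already established in the proof of \cref{prop:functorT}. I do not expect a genuine obstacle: the entire argument is formally the same as the non-equivariant one, and the only new content is the bookkeeping that ensures each constructed map is equivariant, which is automatic from the equivariance of $\varphi$ and $\gamma$ and the fact that the corollas $\varphi^{-1}(a)$ themselves carry compatible $G$-actions.
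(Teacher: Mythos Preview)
Your proposal is correct and follows essentially the same approach as the paper: define the structure maps $\tau_A$ and $\tau_{\gamma,\varphi}$ exactly as in \cref{prop:functorT}, then verify they are $G$-equivariant so that they live in $\T^G$. The only minor difference is in how equivariance is checked: the paper simply observes that the non-equivariant components $(\tau_{|A|})_T$ and $(\tau_{|\gamma|,|\varphi|})_T$ are already $G$-equivariant on edges, invoking \cref{lem:equivariant maps in Omega} directly, whereas you go to the extra trouble of exhibiting each map as a composite of the equivariant generators $\sigma_{[e]}$ and $\delta_{[e]}$ from \cref{sec:Omega upper G}. Both routes yield the same morphisms; the paper's is a little quicker, while yours makes the factorization into generating morphisms of $\Omega^G$ explicit.
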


\begin{proof}
    To show that $\mathcal{T}^G$ defines an oplax functor, we must provide the data of natural transformations 
    \[\tau_A\colon (\id_{A_+})^* \Rightarrow \id_{\T(A)} \quad \text{and} \quad \tau_{\gamma,\varphi}\colon(\varphi \circ \gamma)^* \Rightarrow \gamma^* \circ \varphi^*\]
    for all objects $A_+$ of $\Fin_{G, *}^{\op}$ and all maps $A_+ \xrightarrow{\gamma} B_+ \xrightarrow{\varphi} C_+$ in $\Fin_{G, *}^{\op}$. Consider the underlying pointed set $|A|_+$ of $\Fin_*$ and the corresponding underlying pointed functions $|\gamma|, |\varphi|$; we claim that we can define $\tau_A=\tau_{|A|}$ and $\tau_{\gamma,\varphi} = \tau_{|\gamma|, |\varphi|}$. where the latter are the structure maps of the oplax functor $\T \colon \Fin_* \to \Cat$ of \cref{prop:functorT}. 

    The fact that these correspondences are natural follows directly from the naturality of $\tau_{|A|}$ and $\tau_{|\gamma|,|\varphi|}$, and the same is true for the required unitality and associativity conditions. It only remains to check that for every object $T$ of $\mathcal{T}^G (A)$, each of the components $(\tau_A)_T$ and $(\tau_{\gamma,\varphi})_T$ of the natural transformations are $G$-equivariant. This verification is straightforward, using \cref{lem:equivariant maps in Omega} and inspecting the descriptions of these maps given in \cref{prop:functorT}.
\end{proof}

Let $\treeg$ denote the Grothendieck construction of the oplax functor $\T^G$. The main result of this section is the following.

\begin{theorem}\label{GOmega is GTree}
   There is an equivalence of categories $F \colon \treeg \xrightarrow{\simeq} \Omega^G$. 
\end{theorem}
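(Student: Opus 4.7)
The plan is to mimic the proof of \cref{OmegaisTree}, equipping all constructions with appropriate $G$-actions. Define $F\colon\treeg\to\Omega^G$ on objects by $F(A_+,T) = T$ (forgetting the labeling), and on a morphism $(\varphi,f)\colon (A_+,T)\to (B_+,S)$ by the composite
\[T\xrightarrow{\iota_{\varphi,T}}\varphi^*(T)\xrightarrow{f}S,\]
where $\iota_{\varphi,T}$ is the canonical inclusion built from equivariant outer face maps in $\Omega^G$. Since $\varphi$ is equivariant, the corollas $\varphi^{-1}(a)$ attached to leaves in the same $G$-orbit of $A_+$ are equivariantly isomorphic, so $\iota_{\varphi,T}$ is a well-defined composite of equivariant outer faces. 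Functoriality (preservation of identities and composites) then follows by the same diagrammatic argument as in the non-equivariant case, now using the structural transformations of $\T^G$ produced in \cref{TG is oplax}.

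Essential surjectivity is immediate, since the leaves of any $T\in\Omega^G$ carry a canonical $G$-set structure $A$, giving a preimage $(A_+, T)$ under $F$. For fullness, I would invoke \cref{istrueG} to factor any morphism $f\colon T\to S$ in $\Omega^G$ as a composite $\delta_o\circ\delta_i\circ\alpha\circ\sigma$ of equivariant generators and lift each generator separately. Degeneracies, isomorphisms, and inner faces lift essentially verbatim from the proof of \cref{OmegaisTree}, since they preserve or only permute the $G$-set of leaves. For an equivariant outer face map grafting a $G$-corolla onto the $G$-orbit of a leaf (treating the root as a distinguished leaf as in the non-equivariant proof), define $\varphi\colon B_+\to A_+$ by sending each leaf in the grafted orbit to the corresponding leaf in the orbit of the grafting point, and sending each remaining leaf of $S$ to the unique leaf of $T$ mapped to it by $f$. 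Equivariance of $\varphi$ follows from equivariance of $f$ via \cref{lem:equivariant maps in Omega}, after which a composite of equivariant degeneracies produces the needed preimage in $\treeg$. Faithfulness follows by the same reconstruction strategy as in \cref{OmegaisTree}: $\varphi$ is recovered from $F(\varphi,f)$ by identifying which edges of $S$ are the images of leaves of $T$ and reading off the leaves of $S$ lying above them; equivariance of the reconstruction is forced by equivariance of all the intermediate maps, so $\varphi=\psi$, and then $f=g$ follows by checking agreement on inner edges (covered by the image of $\iota_{\varphi,T}$) and on leaves (forced by the labeling).

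The main obstacle I anticipate is the outer face case of the fullness argument, where one must produce a pointed equivariant function $\varphi\colon B_+\to A_+$ whose fiber structure matches the pattern of the grafted $G$-corollas. The non-equivariant argument simply reads off the sizes $|\varphi^{-1}(i)|$ from the arities of the grafted corollas; in the equivariant setting, one must verify that the fibers of $\varphi$ over leaves in the same $G$-orbit are equivariantly isomorphic as $G$-sets. This is precisely what the definition of an equivariant outer face map guarantees, since the same $G$-corolla is grafted across every leaf in the orbit. Once this compatibility is verified, the remainder of the argument is a direct translation of the non-equivariant proof.
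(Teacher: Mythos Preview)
Your proposal is correct and follows essentially the same approach as the paper: define $F$ via the composite $\iota_{\varphi,T}$ followed by $f$, deduce functoriality from the non-equivariant case, and establish the equivalence by lifting the factorization of \cref{istrueG} generator-by-generator, with the outer face case requiring the extra care you describe. The paper's treatment of that case is only marginally more explicit---it fixes a representative $a$ of the grafting orbit, defines $\varphi$ on the leaves of the corolla attached at $f(a)$ by sending them all to $a$, and then extends equivariantly---but this is exactly the content of your sketch.
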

 
\begin{proof}
    The argument is very similar to the one given for \cref{OmegaisTree}, so we omit many of the details that follow analogously. Define a functor 
    \[ F \colon \treeg \to\Omega^G \] 
    on objects by $F(A_+,T)=T$, observing that here, in contrast to the non-equivariant case, $A_+$ has a $G$-action.  Recall that a morphism $(\varphi, f)\colon (A_+,T)\to(B_+,S)$ in $\treeg$ is given by a $G$-equivariant function $\varphi\colon B\to A$ together with a map $f\colon \varphi^*(T)\to S$ in $\T(B)$, where $\varphi^*(T)$ is obtained from $T$ as described in \cref{constr:oplaxTG}.  Consider the unique composite of outer face maps $\iota_{\varphi,T}\colon T\to\varphi^*(T)$ in $\Omega^G$ that attaches the necessary corollas.
    This map, which is the inclusion of $T$ into $\varphi^*(T)$, is equivariant on edges by definition, and hence defines a morphism in $\Omega^G$ by \cref{lem:equivariant maps in Omega}.
    We then define $F(\varphi, f)$ as the composite
    \[T \xrightarrow{\iota_{\varphi,T}} \varphi^* (T)\xrightarrow{f} S\]
    and observe that it agrees with the functor from \cref{OmegaisTree} after forgetting the $G$-actions.  It follows directly that $F$ is a functor. 
    
    The argument that $F$ is an equivalence of categories is very similar to the non-equivariant case; in particular, $F$ is surjective on objects by the same argument as before.

    Fix objects $(A,T)$ and $(B,S)$ of $\treeg$. As in the non-equivariant case, we show that $F$ is full by decomposing any morphism $f \colon T \rightarrow S$ in $\Omega^G$ as the composite of degeneracy maps, isomorphisms, inner face maps, and outer face maps, and establish fullness for each of these four kinds of maps separately.  The proofs for degeneracy maps, inner face maps, and isomorphisms are completely analogous to the arguments in the non-equivariant setting.

    In the case when $f\colon T\to S$ is an outer face map, there is a bit more subtlety in the equivariant case, since we need to consider orbits. As in the non-equivariant case, we can think of the root of $T$ as another leaf, referring to it by the term ``leaf", but noting that it is still designated as the root by its label $+$. The map $f=\delta_{[v]}$, where $v$ is an external vertex of $S$, which is connected to the edge labeled by some $a\in A_+$ when considered as an edge of $T$.
    Then the leaves of $T$ in the orbit of $a$ are sent to inner edges of $S$, and any leaf $a'$ that is not in the orbit of $a$ is sent to a leaf $f(a')\in B$.  Let $B'\subseteq B$ be the $G$-subset of labels of the form $f(a')$ for some $a'\not\in G\cdot a$.

    We then define a pointed equivariant function $\varphi\colon B_+\to A_+$ as follows. On $B'_+$, we define
    \[\varphi(b)=a',   
    \] 
    where $a'$ is the unique element of $A_+$ such that $b=f(a')$. To define $\varphi$ on $B\setminus B'$, it suffices to specify the map on the leaves of $S$ that belong to the corolla attached to the inner edge $f(a)$ and extend equivariantly; we do so by sending all these leaves to $a$.
    
    Then $\varphi^*(T)$ is obtained from $T$ by attaching a corolla to each leaf in the orbit of $a$ and a 1-corolla to every other leaf.  Let $g\colon \varphi^*(T)\to S$ denote the map in $\T(B)$ given by the composite of the degeneracies that contracts the output edge of each of these unary vertices and note that $g$ is equivariant. Thus we have defined a morphism $(\varphi,g)\colon (A_+,T)\to(B_+, S)$ in $\int \T^G$ such that $F(\varphi,g)=f$.  

    It remains to show that $F$ is faithful, which may be argued in exactly the same way as in the proof of \cref{OmegaisTree}.
    
\end{proof}

\section{Categories of genuine equivariant trees} \label{sec:Gtree}  

In this section, we present fully equivariant versions of the different categories of trees presented in the previous sections.  Preliminary to introducing group actions, we review the theory of forests.  Then we proceed to define Bonventre and Pereira's equivariant dendroidal category $\Omega_G$ associated to a finite group $G$. The idea is that an object of $\Omega_G$ is a tree with an $H$-action for some $H\leq G$, and morphisms in $\Omega_G$ are generated by morphisms in $\Omega^H$ together with morphisms that change the subgroup $H\leq G$. This idea is made precise in \cref{prop:OmegaGH Grothendieck}, where we exhibit $\Omega_G$ as a Grothendieck construction of categories of trees with $H$-action. Finally, in \cref{theresult}, we show that $\Omega_G$ can also be modeled as an iterated Grothendieck construction on categories of equivariant labeled trees, combining \cref{prop:OmegaGH Grothendieck} with \cref{GOmega is GTree}.

\subsection{Forests}\label{sec:forests}

Trees record the ``many-to-one'' operations present in an operad, but other approaches to operads use \emph{forests}, which consist of disjoint unions of finitely many trees. The first incarnation of this idea appears in the categories of operators described by May and Thomason \cite{may_uniqueness_1978} and forms the foundation for quasi-categorical definitions to operads \cite{HA} and their comparison with the dendroidal approach \cite{HHM:16}. In the equivariant context, we are naturally led to consider forests, since the the output of an operation need no longer be single point, but rather an orbit $G/H$; if the orbit is nontrivial, then we need several root edges.

While heuristically a forest may be pictured as a collection of trees placed next to one another, more formally we can define the category of forests as a Grothendieck construction of the category $\Omega$ of trees.  The following definition describes the category of forests as the Grothendieck construction of the functor $\Fin^{\op} \rightarrow \Cat$  taking $\underline{n}$ to the product $\Omega^n$, where $\Fin$ denotes the category of finite sets of form $\underline{n} = \{1, \ldots, n\}$ for $n \geq 1$. This special case of the Grothendieck construction of a functor out of $\Fin$ is sometimes called a \emph{wreath product}; from that perspective the following category would be denoted by $\Fin \wr \Omega$.

\begin{definition}\label{defn:forests}
    Let $\Phi$ be the category with objects pairs $(\underline{n}, T)$ where $T=(T_i)_{i=1}^n$ is a disjoint union of trees, namely, objects of $\Omega$.  Its morphisms $(\underline{n}, T) \rightarrow (\underline{m}, S)$ are given by pairs $(\varphi, (f_i))$, where $\varphi \colon \underline{n} \rightarrow \underline{m}$ is a map in $\Fin$ and for each $1 \leq i \leq n$, $f_i \colon T_i \rightarrow S_{\varphi(i)}$ is a map in $\Omega$.
\end{definition}

\begin{remark}
    The category $\Phi$ above agrees with the category of forests described in \cite[\S 5.2]{Pereira:EquivariantDendroidalSets}, but is different from the one in \cite[\S 3.1]{HHM:16}. See \cite[Remark 5.28]{Pereira:EquivariantDendroidalSets} for more discussion and an explicit comparison.
\end{remark}

There is a fully faithful functor $\Omega\to \Phi$ that includes a tree $T$ as the forest $(\underline{1}, T)$ consisting of a single tree, so we can identify $\Omega$ as a subcategory of $\Phi$.  We thus often write $T$ for $(\underline{1}, T)$ when we consider $T$ as an object of $\Phi$ via this inclusion.

Similarly to trees, there are face and degeneracy maps of forests.  The degeneracy maps in $\Phi$ are analogous to the ones in $\Omega$, and %; an example is given in \ref{fig:forest degen}.  
many of the inner and outer face maps in $\Phi$ are also given by face maps of $\Omega$ applied to each component of the forest. 
However, there is an additional kind of outer face map specific to $\Phi$ called the \emph{root face inclusion}.  Such a map joins together some of the trees in the forest together into a single tree by grafting the roots onto a corolla; an example is given in \cref{fig:forest root face}.

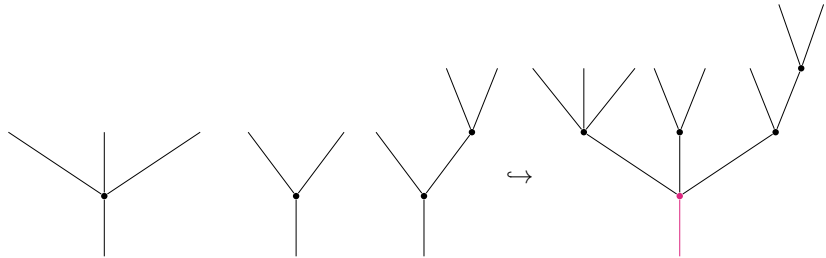
\begin{figure}[h!]
    \centering
    \[
    \scalebox{0.85}{
    \begin{tikzpicture} 
    [level distance=10mm, 
    every node/.style={fill, circle, minimum size=.1cm, inner sep=0pt}, 
    level 1/.style={sibling distance=15mm}, 
    level 2/.style={sibling distance=15mm}, 
    level 3/.style={sibling distance=8mm},
    level 4/.style={sibling distance=7mm}]
    %first forest
    %first tree
    \node (tree) at (-6,0) [style={color=white}] {} [grow'=up] 
    child {node (level2) {} 
	child
	child
	child
    };
    %second tree
    \node (tree) at (-3,0) [style={color=white}] {} [grow'=up] 
    child {node (level2) {} 
	child
	child
    };
    %third tree
    \node (tree) at (-1,0) [style={color=white}] {} [grow'=up] 
    child {node (level2) {} 
	child
	child{ node (level3) {}
		child
		child
	}
    };
    %arrow
    \node[fill=white, label=$\hookrightarrow$] at (0.5,1) {};
    %second forest
    %first tree
    \node (tree) at (3,0) [style={color=white}] {} [grow'=up] 
    child [draw=myred] {node[color=myred] (level2) {} 
	child [draw=black] {node (level3) {} 
	child
	child
        child 
	}
	child  [draw=black] {node (level3) {} 
	child
        child 
	}
	child  [draw=black] {node (level3) {} 
	child 
	child  {node (level4) {} 
	child
	child
	}
	}
    };
    \tikzstyle{every node}=[]
    \end{tikzpicture}
    }
    \]
    \caption{A root face inclusion}
    \label{fig:forest root face}
\end{figure}
    
Finally, there is one additional kind of face map in $\Phi$ that does not change the number of vertices but rather the number of connected components of the forest, as depicted in \cref{fig:forest third face}.

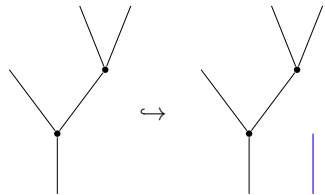
\begin{figure}[h!]
    \centering
    \[
    \scalebox{0.85}{
    \begin{tikzpicture} 
    [level distance=10mm, 
    every node/.style={fill, circle, minimum size=.1cm, inner sep=0pt}, 
    level 1/.style={sibling distance=15mm}, 
    level 2/.style={sibling distance=15mm}, 
    level 3/.style={sibling distance=8mm},
    level 4/.style={sibling distance=7mm}]
    %first forest
    %first tree
    \node (tree) at (-1,0) [style={color=white}] {} [grow'=up] 
    child {node (level2) {} 
	child
	child{ node (level3) {}
		child
		child
	}
    };
    %arrow
    \node[fill=white, label=$\hookrightarrow$] at (0.5,1) {};
    %second forest
    %first tree
    \node (tree) at (2,0) [style={color=white}] {} [grow'=up] 
    child {node (level2) {} 
	child
	child{ node (level3) {}
		child
		child
	}
    };
    %second tree
    \node (tree) at (3,0) [style={color=white}] {} [grow'=up] 
    child [draw=mypurple, thick];
    \tikzstyle{every node}=[]
    \end{tikzpicture}
    }
    \]
    \caption{A third kind of face map}
    \label{fig:forest third face}
\end{figure}

\subsection{The category $\Omega_G$}

We now give the precise definition of the category $\Omega_G$, for which we first recall some notation.  Recall from \cref{sec:Omega upper G} that, given a category $\mathcal C$ and a finite group $G$, we denote by $\mathcal C^G$ the category of $G$-objects in $\mathcal C$, defined as the category of functors $BG \rightarrow \mathcal C$ and natural transformations.  Here, we consider the examples of $\Fin^G$ and $\Phi^G$.  Let $\mathcal O_G$ be the orbit category of $G$, which we take to be the full subcategory of $\Fin^G$ on those objects for which the $G$-action is transitive.  Since we have assumed that the objects of $\Fin$ take the form $\underline{n}$, we can think of the objects of $\mathcal O_G$ as given by the orbits $G/H$ for subgroups $H$ of $G$, equipped with an ordering on its elements.

\begin{definition}\label{defn:omegasubG}
    The category $\Omega_G$ is the pullback in the diagram
    \[ \xymatrix{\Omega_G \ar[r] \ar[d] & \Phi^G \ar[d]^r \\
    \mathcal O_G \ar[r] & \Fin^G.} \]
    Here, $r \colon \Phi^G \rightarrow \Fin^G$ is the root functor taking a forest $(\underline{n}, T=(T_i))$ to the $G$-set of roots of the $T_i$. 
\end{definition}

Observe that, since we are taking $\mathcal O_G$ to be a full subcategory of $\Fin^G$, we can also regard $\Omega_G$ as a full subcategory of $\Phi^G$.  To understand $\Omega_G$, it is thus useful to describe the objects and morphisms of the category $\Phi^{G}$ more explicitly. Since an object of $\Phi^{G}$ is a functor $F\colon BG\to\Phi$, it consists of a forest $T_{1}\amalg\dots\amalg T_{n}$ in $\Phi$ together with, for each $g\in G$, an automorphism of forests 
\[ F(g) \colon T_{1}\amalg\dots\amalg T_{n}\to T_{1}\amalg\dots\amalg T_{n} \] 
describing a $G$-action on the forest.  %Let $I =\{i_1, \ldots, i_n\}$ be the indexing set for the trees in the forest. 
The action of $g$ gives a bijection $g \colon \underline{n} \to \underline{n}$ together with isomorphisms of trees $g\colon T_i\to T_{g\cdot i}$ in $\Omega$ for all $i\in \underline{n} = \{1,\dots, n\}$. 

A morphism in $\Phi^{G}$ is a $G$-equivariant map of forests, which is given by a pair $(\varphi, (f_i)) \colon (\underline{n}, (T_i)) \rightarrow (\underline{m}, (S_j))$ that can be thought of as map
\[ T_{1}\amalg\dots\amalg T_{n} \to S_{1}\amalg\dots\amalg S_{m} \] 
in $\Phi$ such that, for each $g\in G$, there is a commutative square
\begin{equation} \label{gactiononphimorphisms}
    \begin{tikzcd}
        T_{1}\amalg\dots\amalg T_{n}\rar["(\varphi{,}(f_i))"]\dar["g"', "\cong"] & S_{1}\amalg\dots\amalg S_{m}\dar["g", "\cong"'] \\
        T_{1}\amalg\dots\amalg T_{n}\rar["(\varphi{,}(f_i))"'] & S_{1}\amalg\dots\amalg S_{m}.
    \end{tikzcd}
\end{equation} 

Note that according to  \cref{defn:forests} the map $\varphi\colon \underline{n}\to \underline{m}$ between indexing sets is only required to be a function.  However, the additional information encoded in $\Phi^G$ imposes more structure.

\begin{lemma} \label{unpackGforests}
    For any map $(\varphi, (f_i)) \colon (\underline{n}, (T_i)) \rightarrow (\underline{m}, (S_j))$ in $\Phi^G$, the map $\varphi \colon \underline{n} \rightarrow \underline{m}$ on indexing sets is $G$-equivariant.
\end{lemma}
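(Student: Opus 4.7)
The plan is to unpack what the commutative square \eqref{gactiononphimorphisms} says at the level of the indexing functions in $\Fin$, using the definition of composition in the Grothendieck construction $\Phi$. The key observation is that an equality of morphisms in $\Phi$ forces, in particular, an equality of their first components in $\Fin$.

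More concretely, I would first fix $g \in G$ and write the $G$-action on the source and target in terms of their components. The action of $g$ on $(\underline{n},(T_i))$ is a morphism $F(g) = (\alpha_g, (\beta_g)_i)$ in $\Phi$, where $\alpha_g \colon \underline{n} \to \underline{n}$ is a permutation (the $G$-action on the indexing set) and $(\beta_g)_i \colon T_i \to T_{\alpha_g(i)}$ is an isomorphism in $\Omega$; similarly, the action on the target gives $(\gamma_g, (\delta_g)_j)$ with $\gamma_g \colon \underline{m} \to \underline{m}$. By definition, the $G$-action on $\underline{n}$ that makes it into the desired object of $\Fin^G$ is given by $g \cdot i = \alpha_g(i)$, and analogously for $\underline{m}$.

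Next, I would compute both composites in the square using the composition rule in $\Phi$: given $(\varphi,(f_i)) \colon (\underline{n},(T_i)) \to (\underline{m},(S_j))$ and $(\gamma_g,(\delta_g)_j) \colon (\underline{m},(S_j)) \to (\underline{m},(S_j))$, the composite has first component $\gamma_g \circ \varphi$; and the composite $(\varphi,(f_i)) \circ (\alpha_g,(\beta_g)_i)$ has first component $\varphi \circ \alpha_g$. The commutativity of \eqref{gactiononphimorphisms} forces these two morphisms of $\Phi$ to be equal, and hence in particular their first components in $\Fin$ must agree: $\gamma_g \circ \varphi = \varphi \circ \alpha_g$. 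This is precisely the statement that $\varphi$ is $G$-equivariant.

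There is really no obstacle here; the statement is essentially a bookkeeping exercise extracting the first coordinate out of the equivariance datum. The only minor subtlety worth flagging in the write-up is to remind the reader that a morphism in $\Phi$ is a \emph{pair} $(\varphi,(f_i))$ and that two such morphisms are equal if and only if both coordinates agree, so that the equivariance of $\varphi$ is a genuine consequence (rather than just part of the definition) of being a morphism in $\Phi^G$.
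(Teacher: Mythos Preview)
Your proposal is correct and takes essentially the same approach as the paper: both arguments extract the first-coordinate information from the commutative square \eqref{gactiononphimorphisms}, the paper by restricting to a single component $T_i$ and reading off the index of the target tree, and you by invoking the composition rule in the wreath product $\Phi$ and comparing first components. These are two phrasings of the same observation, so there is nothing further to add.
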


\begin{proof}
     If we restrict the commutative square \eqref{gactiononphimorphisms} to a single tree $T_i$ for each $i\in \underline{n}$, we obtain, on the one hand, a morphism 
    \[ T_i\xrightarrow{f_i} S_{\varphi(i)}\xrightarrow{g} S_{g\cdot \varphi(i)}, \] 
    and, on the other hand, a morphism 
    \[ T_i\xrightarrow{g} T_{g\cdot i}\xrightarrow{f_{g\cdot i}} S_{\varphi(g\cdot i)}.\] 
    By commutativity of the square, these two morphisms agree, so that $g\cdot\varphi(i)=\varphi(g\cdot i)$ for every $i\in \underline{n}$, as required.
\end{proof}

We can now make an important observation about the $G$-forests that occur as objects in $\Omega_G$.

\begin{proposition} 
    An object $F$ in $\Omega_G$ is a $G$-forest such that the action of $G$ on $F$ is transitive on the roots.  In particular, every constituent tree in the forest $F$ is in the same $G$-orbit and hence all the trees in $F$ are isomorphic to each other.
\end{proposition}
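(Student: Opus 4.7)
The plan is to unpack the pullback definition of $\Omega_G$ and translate transitivity on the root $G$-set into transitivity on the index set $\underline{n}$ of the forest, using the structure of a $G$-action on a forest.

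First I would recall that by \cref{defn:omegasubG}, an object of $\Omega_G$ corresponds to a pair $(F, A)$ with $F \in \Phi^G$, $A \in \mathcal{O}_G$, and $r(F) = A$ as $G$-sets, where $r \colon \Phi^G \to \Fin^G$ sends a $G$-forest $(\underline{n}, (T_i))$ to the $G$-set of roots of the constituent trees $T_i$. Since the objects of $\mathcal{O}_G$ are precisely the transitive $G$-sets, the root $G$-set $r(F)$ of $F$ must be transitive; that is, $G$ acts transitively on the collection $\{r_1, \dots, r_n\}$ of roots.

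Next I would use the explicit description of a $G$-action on a forest given just before \cref{unpackGforests}: for each $g \in G$, the action is determined by a bijection $g \colon \underline{n} \to \underline{n}$ together with tree isomorphisms $g \colon T_i \to T_{g \cdot i}$ in $\Omega$. Since each constituent tree $T_i$ has a unique root $r_i$ and tree isomorphisms preserve the root, the induced action on $\{r_1,\dots, r_n\}$ is precisely given by $g \cdot r_i = r_{g \cdot i}$. Therefore transitivity of the $G$-action on the roots $\{r_1,\dots, r_n\}$ is equivalent to transitivity of the $G$-action on the index set $\underline{n}$.

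Finally, I would conclude that $G$ acts transitively on $\underline{n}$, so for any $i, j \in \underline{n}$ there exists $g \in G$ with $g \cdot i = j$, whence the structure map $g \colon T_i \to T_{g \cdot i} = T_j$ is an isomorphism in $\Omega$. Thus all constituent trees of $F$ lie in a single $G$-orbit and are pairwise isomorphic. I do not expect any real obstacle here; the content of the statement is essentially an unraveling of the pullback definition combined with the observation that each tree has exactly one root, so the main care is simply in being explicit about how the action on $\underline{n}$ matches the action on the root $G$-set.
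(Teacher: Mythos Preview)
Your proposal is correct and is exactly the intended argument: the paper states this proposition without proof, treating it as an immediate consequence of the pullback definition of $\Omega_G$ together with the explicit description of $G$-forests given just before \cref{unpackGforests}. Your unraveling---identifying the root $G$-set with the indexing $G$-set $\underline{n}$ via the unique root of each component tree, and then using the structure isomorphisms $g\colon T_i\to T_{g\cdot i}$---is precisely what the paper expects the reader to supply.
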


\subsection{Genuine $G$-trees as a Grothendieck construction}\label{sec:Omega lower G is Groth}

In this section we provide a description of $\Omega_G$ as a Grothendieck construction of a functor $\mathcal{O}_G^{\op}\to \Cat$, following a claim in \cite{Pereira:EquivariantDendroidalSets}. This description allows us to prove our last main result, \cref{theresult}, that expresses $\Omega_G$ as an iterated Grothendieck construction relating it to the categories of trees from \cref{def:T(A)}. 

We would like to define a functor $\mathcal{O}_G^{\op}\to \Cat$ that takes an object $G/H$ of $\mathcal{O}_G^{\op}$ to the category $\Omega^H$ of trees with $H$-action.  However, this approach is not well-suited for our purposes because defining functors $\Omega^K\to \Omega^H$ associated to a map of $G$-sets $G/H\to G/K$ for $H\leq K$ requires making choices of coset representatives for $K/H$.  Thus, before defining this functor we show in \cref{prop:OmegaH vs OmegaGH} that there is an alternate description of $\Omega^H$, following ideas of \cite{Pereira:EquivariantDendroidalSets}, that avoids this technical issue. 

\begin{definition}\label{DavidTODO}
    Let $\underline{G/H}$ denote the groupoid whose objects are cosets $gH\in G/H$ and whose morphisms are given by left translation
    \[ \overline{x}\colon gH\to xgH \] 
    for all $x,g\in G$. Define $\Omega^{\underline{G/H}} := \Fun(\underline{G/H}, \Omega)$. 
\end{definition}

Explicitly, an object of $\Omega^{\underline{G/H}}$ is specified by a collection of objects $T_{gH}$ of $\Omega$, for each $gH\in G/H$, together with maps of trees $\overline{x}\colon T_{gH} \to T_{xgH}$ for each $x\in G$ and $gH\in G/H$.  In particular, for all $g,g'\in G$, we have that $T_{gH}\cong T_{g'H}$ as objects in $\Omega$. Additionally, $T_{eH}$ has a well-defined $H$-action, so is an object of $\Omega^H$; more generally, $T_{gH}$ has a $gHg^{-1}$-action. 
The following result formalizes these observations into an equivalence of categories.

\begin{proposition}\label{prop:OmegaH vs OmegaGH}
    For $H\leq G$, there is an equivalence of categories $\Omega^H\simeq \Omega^{\underline{G/H}}$.
\end{proposition}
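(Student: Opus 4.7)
The plan is to reduce the statement to a purely formal consequence of an equivalence of small categories: namely, that the translation groupoid $\underline{G/H}$ is equivalent to $BH$. Once this is established, precomposition yields the desired equivalence of functor categories
\[\Omega^{\underline{G/H}} = \Fun(\underline{G/H}, \Omega) \simeq \Fun(BH, \Omega) = \Omega^H.\]

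Concretely, I would define a functor $\iota\colon BH \to \underline{G/H}$ sending the unique object of $BH$ to the coset $eH$, and each $h \in H$ to the morphism $\overline{h}\colon eH \to eH$; this is a well-defined endomorphism of $eH$ precisely because $h \in H$ implies $heH = eH$. Functoriality is immediate from the composition rule in $\underline{G/H}$, which gives $\overline{h'} \circ \overline{h} = \overline{h'h}$. I would then verify that $\iota$ is an equivalence of categories by checking the two standard conditions. It is essentially surjective because every coset $gH$ is isomorphic to $eH$ via the translation $\overline{g}\colon eH \to gH$, which has inverse $\overline{g^{-1}}$. It is fully faithful because the endomorphisms of $eH$ in $\underline{G/H}$ are exactly the translations $\overline{x}$ with $xeH = eH$, equivalently $x \in H$, and distinct elements of $H$ yield distinct translations of the coset $eH$.

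With $\iota$ shown to be an equivalence of small categories, the induced precomposition functor
\[\iota^*\colon \Fun(\underline{G/H}, \Omega) \xrightarrow{\simeq} \Fun(BH, \Omega)\]
is an equivalence of functor categories, which is the statement to prove. I do not anticipate a genuine obstacle here; the entire argument is the standard observation that the translation groupoid of a transitive $G$-set $G/H$ is a connected groupoid whose vertex group at the basepoint $eH$ is $H$. The only point requiring minor care is to fix conventions so that composition in $\underline{G/H}$ matches multiplication in $H$ under $\iota$, which amounts to choosing left translations consistently.
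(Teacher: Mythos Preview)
Your proposal is correct and follows essentially the same approach as the paper: define the functor $BH\to\underline{G/H}$ sending the unique object to $eH$ and $h\in H$ to $\overline{h}$, observe it is an equivalence of categories, and then apply $\Fun(-,\Omega)$. The paper simply asserts that this functor is an equivalence without spelling out the essential surjectivity and fully faithfulness checks, so your version is slightly more detailed but otherwise identical.
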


\begin{proof}
    There is a functor $BH\to \underline{G/H}$ that sends the object $\ast$ to the coset $eH$ and a morphism $h\in H$ to the automorphism $\overline{h}\colon eH\to eH$. It is straightforward to check that this functor is an equivalence of categories, so the result follows by applying $\Fun(-, \Omega)$ to this equivalence. 
\end{proof}

In fact, $\Omega^{\underline{( \ )}}\colon \mathcal O_G^{\op} \rightarrow \Cat$ defines a diagram of categories, where a map $q\colon G/H\to G/K$ in $\mathcal O_G$ induces a functor $\underline{q}\colon \underline{G/H}\to \underline{G/K}$, and precomposing with this functor yields $q^*\colon \Omega^{\underline{G/K}}\to \Omega^{\underline{G/H}}$.  The following result was stated by Pereira in \cite[Proposition 5.47]{Pereira:EquivariantDendroidalSets}, together with a sketch of the argument.  

\begin{proposition}\label{prop:OmegaGH Grothendieck}
    There is an equivalence of categories $\Omega_G\simeq \int^{\mathcal O_G^{\op}} \Omega^{\underline{( \ )}}$.
\end{proposition}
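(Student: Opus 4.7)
The plan is to construct an explicit functor
$$F \colon \int^{\mathcal{O}_G^{\op}} \Omega^{\underline{(\ )}} \longrightarrow \Omega_G$$
and prove it is an equivalence by checking essential surjectivity and full faithfulness, much as in the proofs of \cref{OmegaisTree} and \cref{GOmega is GTree}.

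On objects, I would send a pair $(G/H, T)$, where $T \colon \underline{G/H} \to \Omega$ is a functor, to the $G$-forest whose underlying forest is the disjoint union $\coprod_{gH \in G/H} T_{gH}$ indexed by $G/H$, and whose $G$-action sends the $gH$-th tree to the $xgH$-th tree via the structure map $T(\overline{x}) \colon T_{gH} \to T_{xgH}$ that comes with the functor $T$. The $G$-set of roots of this forest is then $G/H$, which is an object of $\mathcal{O}_G$, so by \cref{defn:omegasubG} this forest lies in the subcategory $\Omega_G \subseteq \Phi^G$. On morphisms, I would send $(q, f) \colon (G/H, T) \to (G/K, S)$---consisting of $q\colon G/H \to G/K$ in $\mathcal{O}_G$ and a natural transformation $f \colon T \Rightarrow q^* S$---to the morphism of $G$-forests with underlying indexing map $q$ and with $gH$-th tree component $f_{gH} \colon T_{gH} \to S_{q(gH)}$. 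Functoriality follows immediately from the definition of composition in the Grothendieck construction and in the functor categories $\Omega^{\underline{G/H}}$.

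For essential surjectivity, given any $G$-forest $F'$ in $\Omega_G$ with root set $\underline{n}$, the $G$-action on $\underline{n}$ is transitive, so there exists an isomorphism $\alpha \colon G/H \xrightarrow{\cong} \underline{n}$ in $\mathcal{O}_G$ for some $H \leq G$ (since $\mathcal{O}_G$ is a full subcategory of $\Fin^G$ containing all transitive $G$-sets in our chosen skeleton). Transporting the tree data of $F'$ along $\alpha$ yields an object $(G/H, T)$ of the Grothendieck construction with $F(G/H, T) \cong F'$. For full faithfulness, a morphism in $\Omega_G$ from $F(G/H, T)$ to $F(G/K, S)$ consists, by \cref{unpackGforests}, of a $G$-equivariant map of indexing sets $\varphi \colon G/H \to G/K$---which is automatically a morphism in $\mathcal{O}_G$ since both objects are transitive $G$-sets in $\mathcal{O}_G$---together with tree maps $f_{gH} \colon T_{gH} \to S_{\varphi(gH)}$ satisfying the $G$-equivariance square \eqref{gactiononphimorphisms}. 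A direct inspection shows this compatibility is precisely the condition that $(f_{gH})_{gH}$ defines a natural transformation $T \Rightarrow \varphi^* S$ in $\Omega^{\underline{G/H}}$, so the two sets of morphisms are in natural bijection.

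The main technical obstacle is the careful bookkeeping of $G$-equivariance conditions on the forest side against naturality conditions on the functor side. This is, in fact, the chief motivation for \cref{prop:OmegaH vs OmegaGH} and for working with $\Omega^{\underline{G/H}}$ instead of $\Omega^H$: the groupoid $\underline{G/H}$ encodes the indexing by cosets directly into the category, so that the pullback $q^*$ associated to a map $q \colon G/H \to G/K$ in $\mathcal{O}_G$ is canonical. Had we instead used $\Omega^H$, defining $q^*$ would have required choosing representatives of $K/H$-cosets, obstructing the strict functoriality of $\Omega^{(\ )}$ on $\mathcal{O}_G^{\op}$ and producing at best an oplax diagram.
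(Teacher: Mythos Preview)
Your proposal is correct and follows essentially the same approach as the paper: both arguments unpack an object $(G/H,T)$ of the Grothendieck construction as the $G$-forest $\coprod_{gH} T_{gH}$ with root set $G/H$, and identify a morphism $(q,\alpha)$ with a map of $G$-forests by matching the naturality squares of $\alpha$ against the equivariance squares \eqref{gactiononphimorphisms}. The paper phrases the conclusion slightly more tersely (``both encode the same information, thus giving a bijection between the corresponding hom-sets'') while you frame it as an explicit functor checked to be essentially surjective and fully faithful, but the content is the same.
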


\begin{proof} 
    Applying \cref{defn:other Groth construction} to the functor $\Omega^{\underline{( \ )}}$, we see that the objects of the Grothendieck construction are pairs $(G/H, T\colon \underline{G/H}\to \Omega)$, where $H\leq G$ and $T$ is a functor. Thus, for each coset $gH\in G/H$ there is an associated tree $T_{gH}$. By construction, the collection of all these trees assembles to a forest $T_{g_1H}\amalg \dots \amalg T_{g_n H}$, where $n=|G/H|$. Moreover, for each morphism $g'\colon gH\to g'gH$ in $\underline{G/H}$, namely, for each $g'\in G$, there is a morphism between the corresponding trees $T(g')\colon T_{gH}\to T_{g'gH}$. These data assemble into an automorphism of the forest $T_{g_1H}\amalg \dots \amalg T_{g_n H}$ that corresponds to a $G$-action, and via this action the collection of root edges is isomorphic to the transitive $G$-set $G/H$. Using the explicit description of $\Phi^G$ in \cref{unpackGforests}, each object in the category $\Omega_G$ of \cref{defn:omegasubG} is, by definition, isomorphic to such a forest with a $G$-action.

    The morphisms of $\int^{\mathcal O_G^{\op}}\Omega^{\underline{(\ )}}$ are of the form 
    \[ (q, \alpha)\colon (G/H, T\colon\underline{G/H}\to\Omega)\to (G/K, S \colon \underline{G/K} \to \Omega), \] 
    where $q\colon G/H\to G/K$ is a $G$-equivariant map and $\alpha\colon T\Rightarrow \Omega^{\underline{q}}(S)$ is a natural transformation. Note that $\Omega^{\underline{q}}(S)\colon \underline{G/H}\to \Omega$ is the functor that takes an object $gH$ to the tree $S_{q(gH)}$, and a morphism $g'\colon gH\to g'gH$ to the morphism of trees 
    \[ S(g')\colon S_{q(gH)}\to S_{g'q(gH)}=S_{q(g'gH)}. \] 
    Thus, the natural transformation $\alpha$ has components given by maps of trees $\alpha_{gH}\colon T_{gH}\to S_{q(gH)}$ for each $gH\in G/H$. The naturality of $\alpha$ encodes the fact that, for every $g'\in G$, there is a commutative square
    \[\begin{tikzcd}
        T_{gH}\rar["\alpha_{gH}"]\dar["\cong","g'"'] & S_{q(gH)}\dar["g'", "\cong"']\\
        T_{g'gH}\rar["\alpha_{g'gH}"'] & S_{q(g'gH).}
    \end{tikzcd}\] 
    Then observe that the data given by $(q, \alpha)$ and the description of the morphisms $(\varphi,f)$ in $\Omega_G$ just above \cref{unpackGforests} both encode the same information, thus giving a bijection between the corresponding hom-sets in both categories, and concluding the proof. 
\end{proof}

Recall from \cref{GOmega is GTree} that there is an equivalence of categories between $\Omega^H$ and a Grothendieck construction on categories of labeled $H$-trees; we now establish an analogous result for $\Omega^{\underline{G/H}}$, using categories of trees labeled by $G$-sets over $G/H$. We first introduce the relevant definitions.

\begin{definition}
    For $H\leq G$, let $\mathscr{F}_{G/H,*}:= ~_{G/H}\!\setminus (\Fin_G) /_{G/H}$ denote the category in which an object is a $G$-retractive finite $G$-set over $G/H$, \emph{i.e.}, a diagram of finite $G$-sets $G/H\to A\to G/H$ so that the composite is the identity.  Morphisms in this category are maps of finite $G$-sets relative to $G/H$.  We henceforth write $\mathscr{F}^{\op}_{G/H,*}$ for $\left(\mathscr{F}_{G/H,*}\right)^{\op}$.
\end{definition}

An object of $\mathscr{F}_{G/H,*}$ is a $G$-retractive finite $G$-set $A'\leftrightarrows G/H$ for some $H\leq G$. Note that a retractive $G$-set $A'$ is canonically isomorphic, as a $G$-set over $G/H$, to $A\amalg G/H$, where $A$ is the complement of the image of the section $G/H\to A'$. We sometimes abuse notation by writing $A'=A_+$ or by only making reference to the retraction $A_+\to G/H$.  

Given an object $A_+\xrightarrow{\alpha} G/H$, we define a category of $A$-labeled trees $\T^{\underline{G/H}}(A, \alpha)$ whose objects are $G$-forests whose root set is $G/H$ and whose leaves are labeled by $A$.

\begin{definition}
    Let $\alpha\colon A_+\to G/H$ be an object of $\mathscr{F}_{G/H, *}$. Define 
    \[ \T^{\underline{G/H}}(A, \alpha)\subseteq \Omega^{\underline{G/H}} \] 
    to be the subcategory whose objects are those forests $T$ equipped with an equivariant labeling bijection between $A$ and the $G$-set of leaves of $T$ that is compatible with the root labels in the following sense: for all $a\in A$, the leaf labeled by $a$ is on the tree $T_{\alpha(a)}$. A morphism $f\colon T \to S$ is a map in $\Omega^{\underline{G/H}}$ that preserves the labels of the leaves and the roots, thus in particular it consists of a family of maps of trees $f_{gH} \colon T_{gH} \to S_{gH}$ that preserve the labels of the roots and are compatible with the $G$-action. 
\end{definition}

In analogy with the equivalence $\Omega^{\underline{G/H}}\simeq \Omega^H$, we have the following result.

\begin{proposition}\label{trees on fibers}
    For each $\alpha\colon A_+\to G/H$ there is an equivalence of categories $\T^{\underline{G/H}}(A, \alpha) \simeq \T^H(\alpha^{-1}(eH))$.
\end{proposition}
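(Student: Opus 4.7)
The plan is to deduce this equivalence from the proof of \cref{prop:OmegaH vs OmegaGH}, by checking that the equivalence $\Omega^H\simeq \Omega^{\underline{G/H}}$ established there restricts to the full subcategories of labeled objects. Recall that this equivalence is induced by precomposition with the equivalence of categories $BH\to \underline{G/H}$ sending $\ast$ to $eH$; under this correspondence, a functor $T\colon \underline{G/H}\to \Omega$ is sent to the $H$-tree $T_{eH}$, where the $H$-action arises from the automorphisms $\overline{h}\colon T_{eH}\to T_{eH}$ for $h\in H$ (which are genuine automorphisms because $H$ is precisely the stabilizer of $eH\in G/H$).

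First, I would verify that this equivalence sends an object of $\T^{\underline{G/H}}(A,\alpha)$ to an object of $\T^H(\alpha^{-1}(eH))$. By the compatibility condition in the definition of $\T^{\underline{G/H}}(A,\alpha)$, a leaf labeled by $a\in A$ sits on the tree $T_{\alpha(a)}$, so the leaves of $T_{eH}$ are in bijection with the fiber $\alpha^{-1}(eH)$. This bijection is $H$-equivariant because the original labeling on the forest is $G$-equivariant and $H$ stabilizes $eH$. Similarly, a morphism $f\colon T\to S$ in $\T^{\underline{G/H}}(A,\alpha)$ has component $f_{eH}\colon T_{eH}\to S_{eH}$, which is $H$-equivariant (as a component of a natural transformation between functors out of $\underline{G/H}$ restricted along $BH\to \underline{G/H}$) and preserves the labels in $\alpha^{-1}(eH)$.

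For essential surjectivity, I would take an object $S\in \T^H(\alpha^{-1}(eH))$ and produce a functor $T\colon \underline{G/H}\to \Omega$ using the inverse of the equivalence $BH\to \underline{G/H}$: after choosing a coset representative $g$ for each $gH\in G/H$, the tree $T_{gH}$ is a copy of $S$ whose leaves are relabeled via the $G$-action on $A$, so that a leaf of $T_{gH}$ formerly labeled by $a\in \alpha^{-1}(eH)$ becomes labeled by $g\cdot a\in \alpha^{-1}(gH)$. The transition morphisms $\overline{x}\colon T_{gH}\to T_{xgH}$ are then forced by the requirement that they correspond to left translation, and any two choices of representatives yield naturally isomorphic functors. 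The resulting $T$ lies in $\T^{\underline{G/H}}(A, \alpha)$ by construction.

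For fullness and faithfulness, the key observation is that a natural transformation between two functors $T, S\colon \underline{G/H}\to \Omega$ is uniquely determined by its component at $eH$, which must be $H$-equivariant; conversely any $H$-equivariant label-preserving map $T_{eH}\to S_{eH}$ extends uniquely to such a natural transformation by transporting along the left translation maps. The main technicality to keep in mind is simply to track the compatibility of the labelings under this transport, which follows formally from the $G$-equivariance of the retraction $\alpha$.
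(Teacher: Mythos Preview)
Your proposal is correct and follows exactly the paper's approach: the paper's proof consists of the single sentence ``It is straightforward to check that the equivalence in \cref{prop:OmegaH vs OmegaGH} restricts appropriately to these subcategories,'' and you have simply unpacked what that restriction entails. The details you supply (tracking the labeling on $T_{eH}$ via the compatibility condition, reconstructing the forest from a single $H$-tree by choosing coset representatives, and noting that natural transformations out of the connected groupoid $\underline{G/H}$ are determined by their component at $eH$) are all sound and are precisely the verifications the paper leaves implicit.
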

\begin{proof}
    It is straightforward to check that the equivalence in \cref{prop:OmegaH vs OmegaGH} restricts appropriately to these subcategories. 
\end{proof}

In fact, this equivalence is suitably natural to induce an equivalence on Grothendieck constructions, as we now describe.

\begin{construction}\label{constr:oplax TGH}
    Just as in \cref{constr:oplaxTG}, the assignment 
    \[ (A_+\xrightarrow{\alpha} G/H)\mapsto \T^{\underline{G/H}}(A, \alpha) \] 
    extends to an oplax functor 
    \[ \T^{\underline{G/H}}\colon \mathscr{F}_{G/H,  *}^{\op}\to \Cat. \] 
    To describe $\T^{\underline{G/H}}$ on morphisms, let $\varphi\colon (A'_+\xrightarrow{\alpha'} G/H) \to (A_+\xrightarrow{\alpha} G/H)$ be a map in $\mathscr{F}_{G/H, *}$ and $\amalg_{G/H} T_{gH}$ be an object of $\T^{\underline{G/H}}(A, \alpha)$. Recall that in \cref{constr:oplaxTG} %(or \cref{construction:oplaxT} in the non-equivariant setting) 
    we described a process that attaches corollas according to $\varphi$ to produce a $A'$-labeled tree from a $A$-labeled tree. There is an analogous operation on forests, which we also denote by $\varphi^*$; explicitly, $\varphi^*(\amalg_{G/H} T_{gH})$ is the forest given by $\amalg_{G/H} (\varphi|_{\alpha^{-1}(gH)})^*(T_{gH})$, where $\varphi|_{\alpha^{-1}(gH)}\colon \alpha^{-1}(gH) \to \varphi(\alpha^{-1}(gH))$ is the non-equivariant map that witnesses $\varphi$ over the fiber of $gH\in G/H$.
\end{construction}

We can then apply the same proof as for \cref{TG is oplax} to obtain the following result.

\begin{proposition}\label{prop:TGH oplax}
    \Cref{constr:oplax TGH} defines an oplax functor.
\end{proposition}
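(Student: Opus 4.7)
The plan is to mirror the strategy of the proof of \cref{TG is oplax}, bootstrapping the oplax structure from the non-equivariant oplax functor $\T$ of \cref{prop:functorT}, using the fact that the functors $\varphi^*$ in \cref{constr:oplax TGH} are defined fiberwise over $G/H$.

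First I would observe that for each $gH\in G/H$, the fiber $\alpha^{-1}(gH)_+$ is a pointed set, and any morphism $\varphi\colon (A'_+,\alpha')\to (A_+,\alpha)$ in $\mathscr{F}_{G/H,*}$ restricts to pointed functions $\varphi|_{\alpha^{-1}(gH)}$ on each fiber. By the very definition in \cref{constr:oplax TGH}, $\varphi^*(T) = \amalg_{gH}(\varphi|_{\alpha^{-1}(gH)})^*(T_{gH})$, so $\varphi^*$ is literally the ``disjoint union'' of the non-equivariant functors produced by \cref{construction:oplaxT} applied on each fiber.

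I would then define the structure maps $\tau_{(A,\alpha)}\colon \id_{A_+}^*\Rightarrow \id$ and $\tau_{\gamma,\varphi}\colon (\varphi\gamma)^*\Rightarrow \gamma^*\varphi^*$ fiberwise: the component at a forest $T=\amalg_{gH}T_{gH}$ is the forest map whose restriction to the tree over $gH$ is the corresponding structure map from \cref{prop:functorT} applied to $T_{gH}$ with respect to $\alpha^{-1}(gH)_+$ (respectively, with respect to the restricted pair of pointed maps). Concretely, $(\tau_{(A,\alpha)})_T$ is the composite of the degeneracies removing the unary vertices produced by $\id^*_{A_+}$, performed simultaneously on every fiber, and $(\tau_{\gamma,\varphi})_T$ is the composite of inner face maps contracting the newly-created inner edges on every fiber.

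The next step is to check that these fiberwise morphisms assemble into morphisms in $\T^{\underline{G/H}}(A,\alpha)$, which amounts to verifying that they commute with the $G$-action on forests. This follows because the $G$-action permutes the fibers according to the $G$-action on $G/H$, and our structure maps are defined identically on every fiber by identical recipes depending only on the underlying pointed-set data $\alpha^{-1}(gH)_+$, which is compatible with these permutations. Label preservation on leaves and roots is automatic since degeneracies and inner face maps do not alter labeled leaves or the root.

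Finally, I would verify naturality together with the unitality and associativity coherences. Each required equation is an equality between morphisms of $G$-forests, and because both the functors $\varphi^*$ and the natural transformations $\tau$ decompose as disjoint unions over the cosets $gH\in G/H$, every diagram reduces on each fiber to the corresponding diagram in the non-equivariant setting, which commutes by \cref{prop:functorT}. The only real obstacle here is notational bookkeeping: one must record the identification $(\varphi\circ \gamma)|_{\alpha^{-1}(gH)} = \varphi|_{(\alpha')^{-1}(gH)}\circ \gamma|_{\alpha^{-1}(gH)}$ cleanly so that the associativity hexagon for $\tau_{\gamma,\varphi}$ is recognized as the coproduct over $gH$ of the associativity hexagons already known to commute in \cref{prop:functorT}.
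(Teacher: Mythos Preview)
Your proposal is correct and is essentially the same approach the paper intends: the paper's proof is a one-line reference to \cref{TG is oplax}, and what you have written is precisely the unpacking of that reference in the forest setting, defining the oplax structure maps fiberwise from those of \cref{prop:functorT} and then checking $G$-equivariance.
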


Thus, we may consider its Grothendieck construction, for which we first show the following result.

\begin{lemma}\label{lem:TH vs TGH}
   For each $H\leq G$, there is an equivalence of Grothendieck constructions
   \[ \int_{\mathscr{F}_{G/H, *}^{\op}} \T^{\underline{G/H}} \simeq \int_{\Fin_{H,*}^{\op}} \T^H. \] 
\end{lemma}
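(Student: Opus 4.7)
The plan is to reduce the desired equivalence to the combination of two pieces: an equivalence of base categories $\mathscr{F}_{G/H,*}\simeq \Fin_{H,*}$, and the pointwise equivalence of fibers provided by \cref{trees on fibers}. Together, these should assemble into an equivalence of oplax functors, from which the equivalence of Grothendieck constructions follows.

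First, I would construct an equivalence $\Phi\colon \mathscr{F}_{G/H,*}\to \Fin_{H,*}$ that sends a retraction $A_+\xrightarrow{\alpha} G/H$ to its fiber $\alpha^{-1}(eH)$, which inherits a pointed $H$-set structure since $H$ is the stabilizer of $eH$. A quasi-inverse $\Psi\colon \Fin_{H,*}\to \mathscr{F}_{G/H,*}$ takes $B_+$ to the induced $G$-set $G\times_H B_+$ with its canonical retraction onto $G/H$. Checking that $\Phi$ and $\Psi$ are mutually quasi-inverse is a standard exercise in equivariant set theory.

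Next, by \cref{trees on fibers}, for each object $(A,\alpha)$ of $\mathscr{F}_{G/H,*}$ there is an equivalence of categories $\T^{\underline{G/H}}(A,\alpha)\simeq \T^H(\Phi(A,\alpha))$. The critical step is to upgrade this collection of pointwise equivalences to an equivalence of oplax functors $\T^{\underline{G/H}}\simeq \T^H\circ \Phi^{\op}$. Given a morphism $\varphi\colon (A_+,\alpha)\to (A'_+,\alpha')$ in $\mathscr{F}_{G/H,*}$, I would verify that the induced functor $\varphi^*$ of \cref{constr:oplax TGH}, which attaches corollas fiber-by-fiber over $G/H$, corresponds under the fiber equivalence to the functor $(\varphi|_{\alpha'^{-1}(eH)})^*$ of \cref{constr:oplaxTG} attaching the corresponding corollas on the $H$-labeled leaves. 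The oplax structure maps $\tau_A$ and $\tau_{\gamma,\varphi}$ on both sides are described in identical terms on the $eH$-fiber (a composite of degeneracies, respectively inner face maps, on the newly added unary vertices or inner edges), so the identification of the underlying functors automatically identifies these structure constraints as well.

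Finally, I would conclude by invoking the fact from \cref{sec:cat background} that the Grothendieck construction is suitably functorial for equivalences of oplax functors, giving
\[\int_{\mathscr{F}_{G/H,*}^{\op}}\T^{\underline{G/H}}\ \simeq\ \int_{\Fin_{H,*}^{\op}} \T^H\circ \Phi^{\op}\ \simeq\ \int_{\Fin_{H,*}^{\op}} \T^H,\]
where the second equivalence uses that $\Phi$ is an equivalence of base categories. The main obstacle is the bookkeeping in the middle step: one must verify that the equivalence $\Omega^{\underline{G/H}}\simeq \Omega^H$ underlying \cref{trees on fibers} is natural in the sense above, tracking how the attachment of induced corollas to leaves outside the $eH$-fiber is forced by $G$-equivariance from the attachment on the $eH$-fiber. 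Once this naturality is carefully established, the rest of the argument is formal.
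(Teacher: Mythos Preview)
Your approach is essentially the same as the paper's: both use the fiber-over-$eH$ functor as an equivalence of base categories, invoke \cref{trees on fibers} for the pointwise equivalence of fibers, and then appeal to the invariance of the Grothendieck construction under equivalences of base and of oplax functors (\cref{Grothendieck indexing cat change} and \cref{equivalence on Grothendiecks}). One small slip: in your displayed chain the middle term should be indexed by $\mathscr{F}_{G/H,*}^{\op}$, not $\Fin_{H,*}^{\op}$, since $\T^H\circ\Phi^{\op}$ is a functor out of $\mathscr{F}_{G/H,*}^{\op}$; with that correction the two-step factorization matches the paper's exactly.
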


\begin{proof}
    For each $H\leq G$, the functor ${\rm fib}_{eH}\colon \mathscr{F}_{G/H, *}^{\op}\to \Fin_{H,*}^{\op}$, which takes the fiber over $eH$, is an equivalence of categories. By \cref{Grothendieck indexing cat change}, there is an equivalence of categories
    \[ \int_{\mathscr{F}_{G/H, *}^{\op}} \T^H\circ {\rm fib}_{eH} \xrightarrow{\simeq} \int_{\Fin_{H,*}^{\op}} \T^H. \]
    Additionally, by \cref{trees on fibers} there is a natural transformation of functors $\T^{\underline{G/H}}\Rightarrow \T^H \circ {\rm fib}_{eH}$ that is an equivalence of categories for each labeling $G$-set $A_+\to G/H$. Then
    \[ \int_{\mathscr{F}_{G/H, *}^{\op}} \T^{\underline{G/H}} \xrightarrow{\simeq} \int_{\mathscr{F}_{G/H, *}^{\op}} \T^H\circ {\rm fib}_{eH} \] 
    is an equivalence of categories by \cref{equivalence on Grothendiecks}.
\end{proof}

\begin{remark}
    Let $K$ be a subgroup of $G$. An object in the Grothendieck construction $\int_{\mathscr{F}_{G/K, *}^{\op}} \T^{\underline{G/K}}$ can be described explicitly as a pair $(A_+\xrightarrow{\alpha} G/K, T)$, where $T = \amalg_{G/K} T_{gK}$ is an object of $\T^{\underline{G/K}}(A,\alpha)$. A morphism 
    \[
        (\varphi,f)\colon(A_+\xrightarrow{\alpha} G/K,\coprod_{G/K} T_{gK})\to (B_+\xrightarrow{\beta} G/K,\coprod_{G/K} S_{gK})
    \]
    consists of an equivariant function $\varphi\colon B_+\to A_+$ over and under $G/K$, together with a map $f\colon \varphi^*(T)\to S$ in $\T^{\underline{G/K}}(B,\beta)$. Since $\varphi^*(T)=\amalg_{G/K}(\varphi|_{\alpha^{-1}(gK)})^*T_{gK}$, the map $f$ is given by a collection of maps $f_{gK}\colon (\varphi|_{\alpha^{-1}(gK)})^*T_{gK} \to S_{gK}$ compatible with the $G$-action. 
    \end{remark}

    \begin{proposition} 
        The assignment $G/H\mapsto \int_{\mathscr{F}_{G/H, *}^{\op}} \T^{\underline{G/H}}$ assembles into a pseudofunctor 
    \[
        \int_{\mathscr{F}_{(\ ), *}^{\op}} \T^{\underline{(\ )}} \colon \mathcal{O}_G^{\op}\to \Cat.
    \]
    \end{proposition}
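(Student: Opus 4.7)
The plan is to define, for each morphism $q\colon G/H \to G/K$ in $\mathcal{O}_G$, a functor between the corresponding Grothendieck constructions, and then to equip this assignment with the coherent natural isomorphisms required of a pseudofunctor, using the good behavior of pullback of $G$-sets and of the $\underline{(\ )}$-construction.

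First, I would note that pullback along $q$ defines a functor $\mathscr{F}_{G/K,*} \to \mathscr{F}_{G/H,*}$ sending $(A_+\xrightarrow{\alpha} G/K)$ to the pullback retraction $A_+ \times_{G/K} G/H \to G/H$; write this functor as $q^{\flat}$. Meanwhile, the functor $\underline{q}\colon \underline{G/H}\to\underline{G/K}$ of \cref{DavidTODO} induces by precomposition a functor $\underline{q}^*\colon \Omega^{\underline{G/K}}\to \Omega^{\underline{G/H}}$, which restricts to $\T^{\underline{G/K}}(A,\alpha) \to \T^{\underline{G/H}}(q^{\flat}A, q^{\flat}\alpha)$ for each $(A,\alpha)$. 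Packaging these together, I would define
\[
q^*\colon \int_{\mathscr{F}_{G/K, *}^{\op}} \T^{\underline{G/K}} \to \int_{\mathscr{F}_{G/H, *}^{\op}} \T^{\underline{G/H}}
\]
on objects by $q^*(A_+, T) = (q^{\flat}A_+, \underline{q}^*T)$, and on a morphism $(\varphi, f)\colon (A_+, T)\to (B_+, S)$ by the pair $(q^{\flat}\varphi, \widetilde{f})$, where $\widetilde{f}$ is the composite
\[
(q^{\flat}\varphi)^*(\underline{q}^*T) \;\xrightarrow{\;\cong\;}\; \underline{q}^*(\varphi^*T) \;\xrightarrow{\;\underline{q}^*f\;}\; \underline{q}^*S,
\]
whose first arrow is the canonical identification arising from associativity of pullback applied leafwise. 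Functoriality of $q^*$ then follows from naturality of this canonical identification, together with the oplax structure on $\T^{\underline{G/H}}$ established in \cref{prop:TGH oplax}.

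Next, I would construct the pseudofunctoriality data. For a composable pair $G/H\xrightarrow{q} G/K\xrightarrow{r} G/L$, the universal property of pullback yields a canonical natural isomorphism $(rq)^{\flat} \cong q^{\flat} r^{\flat}$, while strict functoriality of $\underline{(\ )}\colon \mathcal{O}_G\to \Cat$ gives $\underline{rq}^* = \underline{q}^*\underline{r}^*$; together these assemble into a natural isomorphism $\mu_{q,r}\colon (rq)^*\Rightarrow q^*\circ r^*$ on Grothendieck constructions. An analogous recipe using $\id^{\flat}A_+ \cong A_+$ and $\underline{\id}^* = \id$ produces the unit isomorphism $\eta_{G/H}\colon \id_{G/H}^* \Rightarrow \id$.

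The main obstacle will be verifying the coherence axioms for a pseudofunctor, namely the associativity pentagon for $\mu$ and the unit triangles relating $\mu$ with $\eta$. These split into two pieces: on the $\mathscr{F}$-coordinate they reduce to the standard pseudofunctoriality of pullback along morphisms in $\mathcal{O}_G$, and on the tree coordinate they reduce to the strict functoriality of $\underline{(\ )}$ together with the compatibility of the canonical pullback comparisons with the oplax structure maps $\tau$ from \cref{prop:TGH oplax}. Checking that these two ingredients fit together in the Grothendieck construction amounts to a diagram chase using the explicit description of composition of morphisms in such Grothendieck constructions given just before \cref{OmegaisTree}. Once these coherence diagrams are verified, the assignment $q\mapsto q^*$ together with $\mu$ and $\eta$ provides the desired pseudofunctor $\int_{\mathscr{F}_{(\ ),*}^{\op}} \T^{\underline{(\ )}}\colon\mathcal{O}_G^{\op}\to \Cat$.
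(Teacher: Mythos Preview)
Your proposal is correct and follows essentially the same approach as the paper: define $q^*$ via pullback on the $\mathscr{F}$-coordinate and via precomposition with $\underline{q}$ on the tree coordinate, then invoke the universal property of pullback for the associativity isomorphism. The paper's proof is considerably terser (it only says ``the action of $q^*$ on morphisms is also defined via pullback'' and appeals directly to the universal property), and it arranges for the unit to be strict by declaring the pullback along $\id$ to equal the identity, whereas you keep a unit isomorphism; otherwise the arguments coincide.
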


   \begin{proof} We first describe the action on morphisms of $\mathcal{O}_G^{\op}$. Let $q\colon G/H\to G/K$ be an equivariant function.  There is an induced functor
    \[
        q^*\colon \int_{\mathscr{F}_{G/K, *}^{\op}} \T^{\underline{G/K}}\to \int_{\mathscr{F}_{G/H, *}^{\op}} \T^{\underline{G/H}}
    \]
    that sends an object $(A_+\xrightarrow{\alpha} G/K,T)$ to $(q^*A_+ \xrightarrow{q^*\alpha} G/H,q^*T)$, where $q^*\alpha$ fits into a pullback square
    \[
        \begin{tikzcd}
            q^*A_+\ar[r] \ar[d,"q^*\alpha"'] & A_+ \ar[d,"\alpha"]\\
            G/H\ar[r,"q"'] & G/K
        \end{tikzcd}
    \]
    and $q^*T$ is given by the composite functor $\underline{G/H}\xrightarrow{q}\underline{G/K} \xrightarrow{T} \Omega$. Thus explicitly, $(q^*T)_{gH} = T_{q(gH)}$, and we view $q^*T$ as an object in $\T^{\underline{G/H}}(q^*A_+,q^*\alpha)$ via the labeling bijection between $q^*A_+$ and the leaves of $q^*T$ that is uniquely determined by the bijection between $A$ and the leaves of $T$. The action of $q^*$ on morphisms is also defined via pullback.

    Note that by taking the pullback along $\id$ to be given by the identity, we can get $\id^*=\id$. If $q\colon G/H\to G/K$ and $p\colon G/K\to G/L$ are composable morphisms, then  $(p\circ q)^*$ and $q^*\circ p^*$ need not be equal.  Rather, they are canonically naturally isomorphic, via the universal property of the pullback. This shows that these data assemble into a pseudofunctor.
    \end{proof}

We now have all the necessary ingredients to express $\Omega_G$ as an iterated Grothendieck construction,  
via the proof of the following result.   

\begin{theorem}\label{theresult}
    There is an equivalence of categories\[
    \Omega_G \simeq \int^{\mathcal O_G^{\op}} \left(\int_{\mathscr{F}_{(\ ), *}^{\op}} \T^{\underline{(\ )}} \right).
    \]
\end{theorem}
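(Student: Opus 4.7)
The plan is to combine the two Grothendieck decompositions already at our disposal. First I would invoke \cref{prop:OmegaGH Grothendieck} to rewrite
$$\Omega_G \;\simeq\; \int^{\mathcal O_G^{\op}} \Omega^{\underline{(\ )}},$$
so that it suffices to exhibit a pseudonatural equivalence between the pseudofunctors
$$\Omega^{\underline{(\ )}},\ \int_{\mathscr{F}_{(\ ),*}^{\op}} \T^{\underline{(\ )}}\colon \mathcal{O}_G^{\op}\longrightarrow \Cat,$$
since a pseudonatural equivalence induces an equivalence between the corresponding Grothendieck constructions (the pseudofunctorial analogue of \cref{equivalence on Grothendiecks} used in \cref{lem:TH vs TGH}).

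For the pointwise equivalence at each $G/H$, I would chain together
$$\int_{\mathscr{F}_{G/H,*}^{\op}} \T^{\underline{G/H}} \;\xrightarrow{\simeq}\; \int_{\Fin_{H,*}^{\op}} \T^{H} \;\xrightarrow{\simeq}\; \Omega^{H} \;\xrightarrow{\simeq}\; \Omega^{\underline{G/H}},$$
using \cref{lem:TH vs TGH}, \cref{GOmega is GTree}, and \cref{prop:OmegaH vs OmegaGH} respectively. Alternatively, one can construct the equivalence directly by adapting the proof of \cref{GOmega is GTree}: send $(A_+\xrightarrow{\alpha} G/H, T)$ to the underlying forest $T$, and send a morphism $(\varphi, f)$ to the composite of the canonical inclusion $T\hookrightarrow \varphi^*(T)$ (a composite of equivariant outer face maps in $\Omega^{\underline{G/H}}$) with $f$. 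Essential surjectivity, fullness, and faithfulness then follow from a forest-level version of \cref{istrueG}: the category $\mathscr{F}_{G/H,*}$ is exactly the combinatorial data needed to encode outer faces that change the set of leaves or roots while remaining compatible with the underlying transitive $G$-set of roots $G/H$.

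The main obstacle will be verifying pseudonaturality in $G/H$. For a $G$-equivariant map $q\colon G/H\to G/K$, the induced functor on the $\Omega^{\underline{(\ )}}$-side is precomposition with $\underline{q}\colon \underline{G/H}\to \underline{G/K}$, whereas the induced functor on the $\int \T^{\underline{(\ )}}$-side, constructed just before the theorem, is the pullback-and-restrict functor sending $(A_+\to G/K, T)$ to $(q^*A_+\to G/H, q^*T)$. To assemble the pointwise equivalences into a pseudonatural one, I must check that these two functors are intertwined by the forgetful equivalence up to coherent natural isomorphism; concretely this reduces, fiber by fiber, to the tautological fact that attaching corollas indexed by the fibers of the pullback $q^*A$ commutes, up to canonical isomorphism, with restricting the forest along $\underline{q}$ and then attaching corollas indexed by the original fibers. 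Once the coherence data for composable $q$ are tracked using the pseudofunctoriality established immediately above \cref{theresult}, the preservation result for Grothendieck constructions yields the claimed equivalence.
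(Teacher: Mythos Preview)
Your proposal is correct and matches the paper's approach almost exactly: the paper also reduces via \cref{prop:OmegaGH Grothendieck}, constructs the direct ``forget the labeling'' functor $\eta_{G/H}$ you describe, proves it is a pointwise equivalence precisely by the commuting square you wrote down using \cref{lem:TH vs TGH}, \cref{GOmega is GTree}, and \cref{prop:OmegaH vs OmegaGH}, and then verifies pseudonaturality in $G/H$ by a direct trace through the definitions. The only small refinement is that the paper treats your two ``alternatives'' as a single argument: it defines $\eta$ directly (so that pseudonaturality becomes a concrete check on forgetting labels, in fact with identity $2$-cells), and uses the composite chain only afterward to certify that this directly defined $\eta$ is an equivalence.
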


We prove this result by comparing the Grothendieck construction of $\T^{\underline{(\ )}}$ on the right-hand side to the one on $\Omega^{\underline{(\ )}}$ from \cref{prop:OmegaGH Grothendieck}. To do so, we first establish a pointwise comparison between these constructions, defining a natural transformation $\eta$ between $G/H\mapsto \int_{\mathscr{F}_{G/H, *}^{\op}}\T^{\underline{G/H}}$ and $G/H\mapsto \Omega^{\underline{G/H}}$ that assembles into the desired equivalence of categories.

\begin{definition}
    For each $H\leq G$, define 
    \[ \eta_{G/H}\colon \int_{\mathscr{F}_{G/H, *}^{\op}}\T^{\underline{G/H}} \to \Omega^{\underline{G/H}} \] 
    similarly as in \cref{GOmega is GTree}. That is, an object $(A_+\xrightarrow{\alpha} G/H, \amalg_{G/H} T_{gH})$ is sent to the underlying forest $\amalg_{G/H} T_{gH}$, and a morphism 
    \[(\varphi,f)\colon (A_+\xrightarrow{\alpha} G/H, \coprod_{G/H} T_{gH}) \to (B_+\xrightarrow{\beta} G/H, \coprod_{G/H} S_{gH})\]
    is sent to the composite
    \[ \coprod_{G/H} T_{gH} \to \varphi^*(\coprod_{G/H} T_{gH}) \xrightarrow{f} \coprod_{G/H} S_{gH} \] 
    in $\Omega^{\underline{G/H}}$, where $\amalg_{G/H} T_{gH} \to \varphi^*(\amalg_{G/H} T_{gH})$ is the unique composite of outer face maps that attaches corollas.
\end{definition}

Note that since 
\[ \varphi^*(\coprod_{G/H} T_{gH})=\coprod_{G/H} (\varphi |_{\alpha^{-1}(gH)})^*(T_{gH}), \] 
the composite above is the coproduct over $G/H$ of the composites
\[ T_{gH} \xrightarrow{\iota_{T_{gH}}}  (\varphi |_{\alpha^{-1}(gH)})^*(T_{gH}) \xrightarrow{f_{gH}} S_{gH}. \] 

\begin{lemma}\label{etaisequiv}
    For each $H\leq G$, the functor $\eta_{G/H}$ is an equivalence of categories.
\end{lemma}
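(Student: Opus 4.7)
The plan is to reduce \cref{etaisequiv} to \cref{GOmega is GTree} (applied to the group $H$) by exhibiting $\eta_{G/H}$ as one side of a square of functors whose other three sides are known equivalences. Concretely, I would build a diagram
\[
\begin{tikzcd}
\int_{\Fin_{H,*}^{\op}} \T^{H} \rar["F","\simeq"'] \dar["e_1"',"\simeq"] & \Omega^{H} \dar["e_2","\simeq"'] \\
\int_{\mathscr{F}_{G/H, *}^{\op}} \T^{\underline{G/H}} \rar["\eta_{G/H}"'] & \Omega^{\underline{G/H}}
\end{tikzcd}
\]
in which $F$ is the equivalence of \cref{GOmega is GTree} (with $G$ replaced by $H$), $e_2$ is an inverse to the equivalence of \cref{prop:OmegaH vs OmegaGH}, and $e_1$ is an inverse to the equivalence of \cref{lem:TH vs TGH}. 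Once I verify that this square commutes up to natural isomorphism, the 2-out-of-3 property of equivalences of categories forces $\eta_{G/H}$ to be an equivalence.

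To check commutativity on objects, given $(A_+, T)$ with $A_+$ an $H$-set and $T$ an $A$-labeled $H$-tree, the composite $e_2\circ F$ sends this to the functor $\underline{G/H}\to \Omega$ given on $gH$ by a copy of $T$ acted on by a chosen coset representative; the composite $\eta_{G/H}\circ e_1$ sends $(A_+,T)$ first to the $G$-retractive $G$-set $G\times_H A_+ \to G/H$ equipped with the corresponding forest $\coprod_{G/H} T_{gH}$ and then forgets the labeling. Both produce the same underlying object of $\Omega^{\underline{G/H}}$ up to canonical isomorphism. On morphisms, the verification reduces to the observation that the ``inclusion of a subtree via outer face maps'' that appears inside $\eta_{G/H}$ is exactly the image under $e_2$ of the analogous inclusion appearing inside $F$, applied fiberwise over each coset.

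The main obstacle is being careful that the equivalence of \cref{prop:OmegaH vs OmegaGH} (taking the ``fiber over $eH$'') is sufficiently natural to pass to the Grothendieck constructions in the compatible way demanded by the square. In particular, I will need to check that the identification $\T^{\underline{G/H}}(A,\alpha)\simeq \T^{H}(\alpha^{-1}(eH))$ from \cref{trees on fibers} is natural in $\alpha$ with respect to both the oplax structure maps (so that $e_1$ is a well-defined functor of Grothendieck constructions) and the outer-face inclusions $\iota_{\varphi,T}$ used in the definition of $\eta_{G/H}$. With that bookkeeping in place, essential surjectivity, fullness, and faithfulness of $\eta_{G/H}$ are transported directly from the corresponding properties of $F$ established in the proof of \cref{GOmega is GTree}.
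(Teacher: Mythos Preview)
Your proposal is correct and follows essentially the same approach as the paper: the paper's proof also builds the commuting square with \cref{GOmega is GTree}, \cref{prop:OmegaH vs OmegaGH}, and \cref{lem:TH vs TGH} on the other three sides and invokes the 2-out-of-3 property. The only difference is cosmetic---the paper orients the vertical equivalences in the other direction and simply asserts that the square commutes by construction of $\eta$, whereas you propose to verify this more explicitly.
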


\begin{proof}
   The natural transformation $\eta$ is defined so that the following diagram commutes: 
   \[ \begin{tikzcd}
    \Omega^{\underline{G/H}} \ar[d, swap, "\simeq"] & \int_{\Fin_{G/H,*}^{\op}} \T^{\underline{G/H}}\ar[l, swap, "\eta_{G/H}"] \ar[d, "\simeq"] \\
    \Omega^H  & \int_{\Fin_{H,*}^{\op}} \T^{H} \ar[l,"\simeq"]
    \end{tikzcd} \] 
    where the maps labeled $\simeq$ are the equivalences from \cref{prop:OmegaH vs OmegaGH}, \cref{GOmega is GTree}, and \cref{lem:TH vs TGH}. The result then follows by the 2-out-of-3 property.
\end{proof}

\begin{proposition}\label{etaisnatural}
    The functors $\eta_{G/H}$ assemble into a pseudonatural transformation 
    \[\eta\colon \int_{\mathscr{F}_{(-), *}^{\op}}\T^{\underline{(-)}} \Rightarrow  \Omega^{\underline{(-)}}\] 
    of functors $\mathcal{O}_G^{\op} \to \Cat$.
\end{proposition}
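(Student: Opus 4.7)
The plan is to unpack the data of a pseudonatural transformation. For each morphism $q\colon G/H\to G/K$ in $\mathcal{O}_G$, I must exhibit an invertible natural transformation
\[ \eta_q \colon q^* \circ \eta_{G/K} \Rightarrow \eta_{G/H} \circ q^* \]
of functors $\int_{\mathscr{F}_{G/K,*}^{\op}}\T^{\underline{G/K}} \to \Omega^{\underline{G/H}}$, and then verify the coherence axioms associated to composition and identities in $\mathcal{O}_G^{\op}$.

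First I would check that both composites agree on objects. Given $(A_+\xrightarrow{\alpha} G/K,\, T\colon\underline{G/K}\to\Omega)$, the composite $q^* \circ \eta_{G/K}$ first forgets the labeling to yield $T$ and then precomposes with $\underline{q}$ to produce $T\circ\underline{q}$. The composite $\eta_{G/H}\circ q^*$ first pulls back the labeling to get $(q^*A_+ \to G/H,\, T\circ\underline{q})$ and then forgets the labeling, again yielding $T\circ\underline{q}$. Hence $\eta_q$ may be taken to be the identity on objects.

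Next I would verify that both composites also agree on morphisms, so that $\eta_q$ is in fact the identity natural transformation. For $(\varphi,f)\colon (A_+,T)\to (B_+,S)$, each composite produces a morphism $T\circ\underline{q}\to S\circ\underline{q}$ in $\Omega^{\underline{G/H}}$ of the form ``attach corollas according to fiberwise preimages of $\varphi$, then apply the underlying map of $f$.'' The only apparent discrepancy is whether the intermediate forest is described as $q^*(\varphi^*T)$ or as $(q^*\varphi)^*(q^*T)$, but under the canonical identification $(q^*\alpha)^{-1}(gH)\cong \alpha^{-1}(q(gH))$ supplied by the pullback defining $q^*A_+$, these factorizations yield the same morphism in $\Omega^{\underline{G/H}}$.

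Finally, I would check the coherence axioms. Since $\Omega^{\underline{(-)}}$ is a strict $2$-functor (given by precomposition with $\underline{(-)}$), its associator and unitor are identities, so the coherence diagrams reduce to comparing those of $\int_{\mathscr{F}_{(-),*}^{\op}}\T^{\underline{(-)}}$ after postcomposition with $\eta_{G/H}$. But the pseudofunctoriality of this assignment arises entirely from the canonical isomorphisms $q^*\circ p^*\cong (p\circ q)^*$ in the set-theoretic pullback of labelings, which is data forgotten by $\eta_{G/H}$. Hence the coherence hexagon for composition collapses to a diagram of identities, and the unit axiom follows from the normalization $\id^*=\id$. The main obstacle is the careful bookkeeping of pullbacks, ensuring the comparisons in the second step hold as literal equalities rather than canonical isomorphisms; once a coherent cleavage in $\Fin^G$ is fixed, $\eta$ is in fact a strict natural transformation, and in particular pseudonatural.
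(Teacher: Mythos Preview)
Your proposal is correct and follows essentially the same approach as the paper: both arguments show that the naturality square for each $q$ commutes strictly on objects and morphisms (so the $2$-cells $\eta_q$ may be taken to be identities), and both dispatch the coherence axioms by observing that the associators and unitors of the source pseudofunctor record only labeling data, which is forgotten after whiskering with $\eta_{G/H}$. The paper carries out the object- and morphism-level checks by writing everything as coproducts indexed over $G/H$, while you phrase the same verification in terms of the identification $(q^*\alpha)^{-1}(gH)\cong\alpha^{-1}(q(gH))$; these are the same computation.
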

\begin{proof} 
    Let $q\colon G/H\to G/K$ be a morphism in $\mathcal O_G$. We will prove that the diagram
    \[ \begin{tikzcd}
        \int_{\mathscr{F}_{G/K, *}^{\op}}\T^{\underline{G/K}}\ar[r, "\eta_{G/K}"] \ar[d] & \Omega^{\underline{G/K}} \ar[d] \\
        \int_{\mathscr{F}_{G/H, *}^{\op}}\T^{\underline{G/H}} \ar[r, swap, "\eta_{G/H}"] & \Omega^{\underline{G/H}}
    \end{tikzcd} \] 
    commutes. Given an object $(A_+\to G/K, \amalg_{G/K} T_{gK})$ in $\int_{\mathscr{F}_{G/K, *}^{\op}}\T^{\underline{G/K}}$, taking the composite along the top and right yields
    \[(A_+\to G/K, \coprod_{G/K} T_{gK})\mapsto \coprod_{G/K} T_{gK}\mapsto \coprod_{G/H} T_{q(gH)},\] 
    while taking the composite along the left and bottom yields
    \[(A_+ \to G/K, \coprod_{G/K} T_{gK})\mapsto (q^*(A)_+\to G/H, \coprod_{G/H} T_{q(gH)})\mapsto \coprod_{G/H} T_{q(gH)}.\] 
    Hence the diagram commutes at the level of objects.

    Next, suppose we are given a morphism 
    \[ (\varphi,f)\colon (A_+ \xrightarrow{\alpha} G/K, \coprod_{G/K} T_{gK}) \to (B_+ \xrightarrow{\beta} G/K, \coprod_{G/K} S_{gK}), \] 
    consisting of an equivariant map $\varphi\colon B_+\to A_+$ over $G/K$ and a map of forests 
    \[
      f =  \coprod_{G/K} f_{gK}\colon  \coprod_{G/K} (\varphi|_{\alpha^{-1}(gK)})^*T_{gK} \to \coprod_{G/K} S_{gK}.
    \]
    Tracing the action on this morphism along the top-right corner of the diagram we obtain
    \begin{align*}
        (\varphi, f) & \mapsto\left( \coprod_{G/K} T_{gK} \xrightarrow{\coprod_{G/K} \iota_{T_{gK}}} \coprod_{G/K} (\varphi|_{\alpha^{-1}(gK)})^*T_{gK} \xrightarrow{\coprod_{G/K} f_{gK}} \coprod_{G/K} S_{gK}\right)\\
        & \mapsto \left( \coprod_{G/H} T_{q(gH)} \xrightarrow{\coprod_{G/H} \iota_{T_{gK}}} \coprod_{G/H} (\varphi|_{\alpha^{-1}(q(gH))})^*T_{q(gH)} \xrightarrow{\coprod_{G/H} f_{q(gH)}} \coprod_{G/H} S_{q(gH)}\right).
    \end{align*}
    On the other hand, if we trace the action along the left-bottom corner we obtain
    \begin{align*}
        (\varphi, f) &\mapsto (q^*(\varphi) \colon q^*(B)_+ \to q^*(A)_+, \coprod_{G/H} f_{q(gH)})\\
        &\mapsto \left( \coprod_{G/H} T_{q(gH)} \xrightarrow{\coprod_{G/H} \iota_{T_{gK}}} \coprod_{G/H} (\varphi|_{\alpha^{-1}(q(gH))})^*T_{q(gH)} \xrightarrow{\coprod_{G/H} f_{q(gH)}} \coprod_{G/H} S_{q(gH)}\right).
    \end{align*}
    and hence the diagram commutes at the level of morphisms as well.

    Finally, we note that we can construct the natural transformation $\eta$ with structure $2$-cells equal to the identity. To check this one needs to check the compatibility conditions with respect to the identity morphisms and composition. These follow from the observation that the associators and unitors of the pseudofunctor $\int_{\mathscr{F}_{(-), *}^{\op}}\T^{\underline{(-)}}$ only affect the choice of labelings of leaves on trees, and thus become identities after whiskering with the functors $\eta_{G/H}$, as they forget the labelings.
\end{proof}

\begin{proof}[Proof of \cref{theresult}]
    The theorem is a direct consequence of \cref{prop:OmegaGH Grothendieck}, \cref{etaisequiv}, and \cref{etaisnatural}, since it follows from \cref{equivalence on Grothendiecks} that there is an equivalence of Grothendieck constructions
    \[ \int^{\mathcal O_G^{\op}} \Omega^{\underline{(-)}} \simeq \int^{\mathcal O_G^{\op}} \left(\int_{\mathscr{F}_{(-), *}^{\op}}\T^{\underline{(-)}}\right), \] 
    as desired.
\end{proof}

\begin{remark}\label{rmk:unpack OmegaG}
    Using the definitions of the Grothendieck constructions from \cref{defn:grothconstruction} and \cref{defn:other Groth construction}, we can explicitly describe the objects and morphisms of the iterated Grothendieck construction\[
    \int^{\mathcal O_G^{\op}} \left(\int_{\mathscr{F}_{(-), *}^{\op}}\T^{\underline{(-)}}\right)
    \] as follows. An object is a triple $(G/H, A_+\xrightarrow{\alpha} G/H, \amalg_{G/H} T_{gH})$ for some $H\leq G$, a labeling set $A_+\xrightarrow{\alpha} G/H$ from  
    $\mathscr{F}_{G/H,*}$, and an object $\amalg_{G/H} T_{gH}$ of $\T^{\underline{G/H}}(A, \alpha)$. A morphism 
    \[(q, \varphi, f)\colon (G/H, A_+\xrightarrow{\alpha} G/H, \coprod_{G/H} T_{gH}) \to (G/K, B_+\xrightarrow{\beta} G/K, \coprod_{G/K} S_{gK})\] 
    consists of the following data:
    \begin{itemize}
        \item a map $q\colon G/H\to G/K$ in $\mathcal O_G$,
        
        \item a map of $G$-sets $\varphi \colon (q^*B_+\xrightarrow{q^*\beta} G/H) \to (A\xrightarrow{\alpha} G/H)$ in $\mathscr{F}_{G/H, *}$, and
        
        \item a map of forests $f\colon \varphi^*(\amalg_{G/H} T_{gH}) \to \amalg_{G/H} S_{q(gH)})$ in $\T^{\underline{G/H}}(q^*B,{q^* \beta})$.
    \end{itemize}
    Note that both the forests $\varphi^*(\amalg_{G/H} T_{gH})$ and $\amalg_{G/H} S_{q(gH)}$ are indeed objects of $\T^{\underline{G/H}}(q^*B, q^*\beta)$.
\end{remark}

\begin{remark}
One might have hoped, in analogy with \cref{GOmega is GTree}, to express $\Omega_G$ as the Grothendieck construction of a single oplax functor $\overline{\T}\colon \mathcal{C}\to \Cat$, where $\mathcal{C}$ is a category of ``genuine finite pointed $G$-sets.'' Such a category is considered, in instance, in work of Bonventre and Bonventre--Pereira \cite{bonventre:19,BonventrePereira}.  The category $\mathcal{C}$ can be constructed as Grothendieck construction on the functor $G/H\mapsto \mathscr{F}_{G/H,*}^{\op}$, and its objects are pairs $(G/H,A_+)$ where $A_+$ is a retractive finite $G$-set over $G/H$.  Morally, such a result would amount to expressing the iterated Grothendieck construction of \cref{theresult} into a single Grothendieck construction. If the two Grothendieck constructions under consideration had been of the same type then such a construction would be formal, but as they are not we were unable to make this simplification work in this setting.
\end{remark}

\appendix
\section{Oplax functors and the Grothendieck construction} \label{sec:cat background}

Because the Grothendieck construction, particularly for oplax functors, is key to many of our arguments in this paper, we recall the main definitions here, as well as include some results that are difficult to find in the literature.  

Let $\Cat$ denote the 2-category of small categories, functors and natural transformations.  We begin with the following preliminary definition.

\begin{definition} \label{defn:whiskering}
    Given a diagram of categories, functors and a natural transformation of the form
    \[\begin{tikzcd}
	{\mathcal A} \ar[r, "F"] & {\mathcal B} \ar[rr, bend left=30, "G"{name=U}]\ar[rr, bend right=30, swap, "G'"{name=L}] && {\mathcal C} \ar[r, "H"] & {\mathcal D,}
    \ar[Rightarrow, from=U, to=L, "\alpha", shorten <= 4pt, shorten >= 4pt]
    \end{tikzcd}\]
    the \emph{whiskering} of $\alpha$ by $F$ is the natural transformation denoted by $\alpha \cdot F \colon G\circ F \to G'\circ F$, whose component at the object $a$ of $\mathcal{A}$ is given by $\alpha_{F(a)} \colon GF(a) \to G'F(a)$.  Similarly, the \emph{whiskering} of $\alpha$ by $H$ is the natural transformation denoted by $H \cdot \alpha \colon H\circ G \to H\circ G'$, whose component at the object $b$ of $\mathcal{B}$ is given by $H(\alpha_{b}) \colon HG(b) \to HG'(b)$.
\end{definition}

While there is a more general definition between arbitrary 2-categories, here we define the notion of an oplax functor from a small category to  the 2-category $\Cat$.  Observe the role of whiskering in the definition, although we do not refer to it explicitly.

\begin{definition}\label{defn:oplax}
    Let $\mathcal{A}$ be a small category.  An \emph{oplax functor} $(F,\tau) \colon \mathcal{A}^{\op} \to \Cat$ consists of the following data:
    \begin{itemize}
        \item for every object $a$ in $\mathcal{A}$, a category $F(a)$;
        
        \item for every morphism $f\colon a \to b$ in $\mathcal{A}$, a functor $F(f)\colon F(b) \to F(a)$;
        
        \item for every pair of composable pairs of morphisms $a\xrightarrow{f} b \xrightarrow{g} c$ in $\mathcal{A}$, a natural transformation 
        \[ \tau_{f,g}\colon F(gf) \Rightarrow F(f)F(g);
        \]
    
        \item for every object $a$ of $\mathcal{A}$, a natural transformation
        \[\tau_a \colon F(\id_a) \Rightarrow \id_{F(a)}.\]
    \end{itemize}
    These data are subject to the following coherence conditions:
    \begin{itemize}
        \item for every composable triple of morphisms $a\xrightarrow{f} b \xrightarrow{g} c \xrightarrow{h} d$ in $\mathcal{A}$, the diagram of natural transformations 
        \[ \begin{tikzcd}
        F(hgf)\rar[Rightarrow, "\tau_{gf,h}"]\dar[Rightarrow,"\tau_{f,hg}"'] & F(gf)F(h)\dar[Rightarrow," \tau_{f,g}\cdot F(h)"] \\
        F(f)F(hg)\rar[Rightarrow, "F(f)\cdot \tau_{g,h}"'] & F(f)F(g)F(h)
        \end{tikzcd} \]
        commutes; and
        
        \item for every morphism $f\colon a \to b$ in $\mathcal{A}$, the diagrams
        \[ \begin{tikzcd}[column sep=small]
            F(f\circ \id_a) \ar[r, equal]\ar[d, Rightarrow,"\tau_{\id_a,f}"'] &  \id_{F(a)}\circ F(f)\\
             F(\id_a)\circ F(f)\ar[ur, Rightarrow, " \tau_a \cdot F(f)"']
        \end{tikzcd}\qquad 
        \begin{tikzcd}[column sep=small]
            F(\id_b\circ f) \ar[r, equal]\ar[d, Rightarrow,"\tau_{f,\id_b}"'] & F(f)\circ \id_{F(b)}\\
           F(f)\circ F(\id_b)\ar[ur, Rightarrow, "F(f)\cdot \tau_b"']
        \end{tikzcd} \]
        commute.
    \end{itemize}
\end{definition}

In many cases of interest the structure natural transformations $\tau_{f,g}$ and $\tau_a$ of an oplax functor are actually natural isomorphisms.  

\begin{definition}\label{def:pseudofunctor}
    An oplax functor $(F,\tau)\colon \mathcal{A}^{\op}\to \Cat$ is a  \emph{pseudofunctor} if for all objects $a$ of $\mathcal{A}$ and composable morphisms $f$ and $g$, the natural transformations $\tau_{f,g}$ and $\tau_a$ are both natural isomorphisms.
\end{definition}

Just as functors are compared using natural transformations, we now briefly recall the notion of a pseudonatural transformation, which is used to compare pseudofunctors.

\begin{definition}
    Let $(F,\tau),(G,\sigma)\colon \mathcal{A}^{\op}\to \Cat$ be two pseudofunctors.  A \emph{pseudonatural transformation} $\eta\colon (F,\tau)\Rightarrow (G,\sigma)$ consists of the following data:
    \begin{itemize}
        \item for every object $a$ of $\mathcal{A}$, a functor $\eta_a\colon F(a)\to G(a)$; and
        
        \item for every morphism $f\colon a\to b$ in $\mathcal{A}$, a natural isomorphism $\eta_f$ inhabiting the square of functors
        \[
        \begin{tikzcd}
            F(b) \ar[d,"F(f)"'] \ar[r,"\eta_b"] & G(b) \ar[d,"G(f)"]\\
            F(a) \ar[r,"\eta_a"'] & G(a). \ar[from = 1-2,to = 2-1,Rightarrow,"\eta_f",shorten=4mm]
        \end{tikzcd}
        \]
    \end{itemize}
    The functors $\eta_a$ and natural isomorphisms $\eta_f$ are subject to various compatibility conditions with the the structure maps of $(F,\tau)$ and $(G,\sigma)$.  We refer the reader to \cite[Section 4.2]{JohnsonYau2cats} for details.
 \end{definition}

We sometimes denote an oplax functor $(F, \tau)$ simply by its first coordinate $F$.  The idea here is that such an assignment $F$ on objects and morphisms fails to be a functor precisely because the natural transformations $\tau_{f,g}$ and $\tau_a$ are not identities.  Nonetheless, these data are sufficient to define the Grothendieck construction.

\begin{definition}\label{defn:grothconstruction}
    Given an oplax functor $(F,\tau) \colon \mathcal{A}^{\op}\to \Cat$, its \emph{Grothendieck construction} $\int_{\mathcal{A}^{\op}} F$ is the category in which:
    \begin{itemize}
        \item objects are pairs $(a,x)$ consisting of an object $a$ of $\mathcal{A}$ and an object $x$ of the category $F(a)$;
    
        \item morphisms $(a,x)\to (b,y)$ are pairs $(f,\alpha)$ consisting of a morphism $f\colon b\to a$ in $\mathcal{A}$ and a morphism $\alpha\colon F(f)(x)\to y$ in $F(b)$; 

        \item the composite of morphisms $(f,\alpha)\colon (a,x)\to (b,y)$ and $(g,\beta)\colon (b,y)\to (c,z)$ is given by $fg$ in the first coordinate and the composite
        \[ F(fg)(x)\xrightarrow{(\tau_{g,f})_x} F(g)(F(f) (x))\xrightarrow{F(g)(\alpha)} F(g)(y) \xrightarrow{\beta} z\]
        in the second coordinate; and

        \item the identity morphism on $(a,x)$ is given by the pair $(\id_a,(\tau_a)_x)$.
    \end{itemize}
\end{definition}

Checking that this composition is associative and unital amounts to using the naturality and coherence conditions on $\tau$. The category $\int_{{\mathcal A}^{\op}} F$ is equipped with a canonical projection $\int_{\mathcal{A}^{\op}} F\to \mathcal{A}^{\op}$ onto the first coordinate.

In some cases, one can also consider a slightly different version of the Grothendieck construction, in which the direction of the morphisms is reversed. Although (op)lax functors admit this construction as well, we include this definition here for pseudofunctors, which is all we need for the purposes of this paper.  Note that we make no distinction on the name and use ``Grothendieck construction'' for both, as the ambiguity is taken care of by the change in notation.

\begin{definition}\label{defn:other Groth construction}
    Given a pseudofunctor $F\colon \mathcal{A}^{\op}\to \Cat$, its \emph{Grothendieck construction}$\int^{\mathcal{A}^{\op}} F$ is the category in which:
    \begin{itemize}
        \item objects are pairs $(a,x)$ consisting of an object $a$ of $\mathcal{A}$ and an object $x$ of the category $F(a)$;
        \item morphisms $(a,x)\to (b,y)$ are pairs $(f,\alpha)$ consisting of a morphism $f\colon a\to b$ in $\mathcal{A}$ and a morphism $\alpha\colon x\to F(f)(y)$ in $F(a)$; 
        \item the composite of morphisms $(f,\alpha)\colon (a,x)\to (b,y)$ and $(g,\beta)\colon (b,y)\to (c,z)$ is given by $gf$ in the first coordinate and the composite
        \[ x\xrightarrow{\alpha} F(f)(y) \xrightarrow{F(f)(\beta)} F(f)(F(g)(z)) \xrightarrow{\tau^{-1}_{f,g}} F(gf)(z)\]
        in the second coordinate; and
        \item the identity morphism on $(a,x)$ is given by the pair $(\id_a,\id_x)$.
    \end{itemize}
    Similarly to the version in \cref{defn:grothconstruction}, the category $\int^{\mathcal{A}^{\op}} F$ is equipped with a canonical projection $\int^{\mathcal{A}^{\op}} F\to \mathcal A$. 
\end{definition}

 We conclude this appendix with two well-known results that we use in \cref{sec:Gtree}.

\begin{proposition}\label{equivalence on Grothendiecks}
    Let $F, F'\colon I^{\op}\to \Cat$ be oplax functors and suppose there is an oplax natural transformation $\eta\colon F\to F'$ such that $\eta_i$ is an equivalence of categories for each object $i$ of  $I$. Then there is an equivalence of categories
    \[ \int_{I^{\op}} F \simeq \int_{I^{\op}} F'.  \]
\end{proposition}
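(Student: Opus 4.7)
The plan is to construct an explicit functor $\eta_* \colon \int_{I^{\op}} F \to \int_{I^{\op}} F'$ induced by the transformation $\eta$, and then show it is an equivalence by verifying essential surjectivity and fully faithfulness pointwise over $I^{\op}$. On objects, set $\eta_*(i, x) = (i, \eta_i(x))$. Given a morphism $(f, \alpha) \colon (i, x) \to (j, y)$ in $\int_{I^{\op}} F$, consisting of $f \colon j \to i$ in $I$ and $\alpha \colon F(f)(x) \to y$ in $F(j)$, the structure 2-cell $\eta_f \colon F'(f) \circ \eta_i \Rightarrow \eta_j \circ F(f)$ of $\eta$ provides a morphism $(\eta_f)_x \colon F'(f)(\eta_i(x)) \to \eta_j(F(f)(x))$ in $F'(j)$, which can then be composed with $\eta_j(\alpha)$ to land in $\eta_j(y)$. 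We therefore define $\eta_*(f, \alpha) = (f, \eta_j(\alpha) \circ (\eta_f)_x)$. Functoriality of $\eta_*$ is then a direct, if tedious, consequence of the naturality of each $\eta_f$ together with the compatibility axioms relating the structure $2$-cells $\eta_f$ to the structure maps $\tau$ of $F$ and $F'$; this is the only real bookkeeping step, and it is formal.

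For essential surjectivity of $\eta_*$, fix an object $(i, x')$ of $\int_{I^{\op}} F'$. Since $\eta_i$ is an equivalence, it is essentially surjective, so there exists $x$ in $F(i)$ together with an isomorphism $\phi \colon \eta_i(x) \xrightarrow{\cong} x'$ in $F'(i)$. Combining $\phi$ with the unit structure map of $F'$ at $i$ produces an isomorphism $(\id_i, \phi \circ (\tau^{F'}_i)_{\eta_i(x)})$ in $\int_{I^{\op}} F'$ between $\eta_*(i, x)$ and $(i, x')$, witnessing essential surjectivity.

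For fully faithfulness, observe that $\eta_*$ acts as the identity on the $I^{\op}$-component of morphisms, so it suffices to fix $f \colon j \to i$ in $I$ and objects $x \in F(i)$, $y \in F(j)$, and show that the map
\[
    \Hom_{F(j)}(F(f)(x), y) \longrightarrow \Hom_{F'(j)}(F'(f)(\eta_i(x)), \eta_j(y)), \qquad \alpha \longmapsto \eta_j(\alpha) \circ (\eta_f)_x,
\]
is a bijection. The structure $2$-cell $(\eta_f)_x$ is an isomorphism (we interpret the hypothesis so that $\eta$ has invertible structure $2$-cells, as in the pseudonatural transformations of \cref{def:pseudofunctor}; this is how the lemma is applied in \cref{lem:TH vs TGH} via \cref{etaisnatural}), so precomposition with it is a bijection onto $\Hom_{F'(j)}(\eta_j(F(f)(x)), \eta_j(y))$. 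The latter is in bijection with $\Hom_{F(j)}(F(f)(x), y)$ via the action of $\eta_j$, which is fully faithful since it is an equivalence. Composing these two bijections recovers the claimed one.

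The only real obstacle is the coherence verification needed for functoriality of $\eta_*$; once that is dispatched, essential surjectivity and fully faithfulness are immediate from the corresponding properties of each $\eta_i$ together with invertibility of the 2-cells $\eta_f$.
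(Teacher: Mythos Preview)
The paper does not actually supply a proof of this proposition; it is stated as one of ``two well-known results'' at the end of the appendix, and the text moves directly to \cref{Grothendieck indexing cat change}. Your argument is the standard one and is correct: build the induced functor $\eta_*$ on the total categories using the components $\eta_i$ and the structure $2$-cells $\eta_f$, then read off essential surjectivity and fully faithfulness from those of each $\eta_i$.

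You are also right to flag the hypothesis issue. As literally stated, the proposition asks only for an \emph{oplax} natural transformation, but your fully faithfulness step uses that $(\eta_f)_x$ is invertible. This is not automatic for a genuinely oplax $\eta$: take $I=[1]$, all four categories equal to $B\mathbb{N}$, all functors the identity, and let the single structure $2$-cell be ``add $1$''; then each $\eta_i$ is the identity, yet $\eta_*$ acts on the relevant hom-set as $n\mapsto n+1$ and is not full. So the induced functor need not be an equivalence in the strictly oplax case. Your resolution---to read the hypothesis as pseudonatural, since that is how the result is invoked in \cref{lem:TH vs TGH} and in the proof of \cref{theresult} via \cref{etaisnatural}---is exactly the right move, and with that reading your proof goes through cleanly. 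One small correction: the reference you cite for pseudonatural transformations, \cref{def:pseudofunctor}, is the definition of a pseudo\emph{functor}; the relevant notion is the pseudonatural transformation defined just afterward.
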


\begin{proposition} \label{Grothendieck indexing cat change} 
    For any functor $G\colon J^{\op}\to I^{\op}$ and oplax functor $F\colon I^{\op}\to \Cat$, there is a functor 
    \[ \int_{G} F\colon \int_{J^{\op}} FG\to\int_{I^{\op}} F.\] 
    Moreover, if $G$ is an equivalence of categories, then so is $\int_{G^{\op}} F$.
\end{proposition}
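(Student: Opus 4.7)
The plan is to define the functor $\int_G F$ explicitly, verify its functoriality, and then prove the equivalence claim by establishing fully faithfulness and essential surjectivity. On objects, I set $\int_G F(j,x) = (G(j), x)$, and on morphisms, a pair $(f,\alpha)\colon (j,x) \to (j',x')$ (consisting of $f\colon j'\to j$ in $J$ and $\alpha\colon FG(f)(x) \to x'$ in $FG(j')$) is sent to $(G(f),\alpha)\colon (G(j),x)\to (G(j'),x')$. Functoriality is immediate from the observation that the structure $2$-cells of the composite oplax functor $FG$ are inherited pointwise from $F$ via $\tau^{FG}_j = \tau^F_{G(j)}$ and $\tau^{FG}_{f,g} = \tau^F_{G(f),G(g)}$, so that the prescribed formulas for identity morphisms and composition in $\int_{J^{\op}}FG$ and $\int_{I^{\op}}F$ correspond under application of $G$ in the first coordinate.

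Now suppose $G$ is an equivalence of categories, with pseudoinverse $G'\colon I^{\op} \to J^{\op}$ and natural isomorphisms $\epsilon\colon GG' \Rightarrow \id_{I^{\op}}$ and $\eta\colon \id_{J^{\op}} \Rightarrow G'G$. Full faithfulness of $\int_G F$ is straightforward: a morphism $(G(j),x)\to (G(j'),x')$ in $\int_{I^{\op}} F$ consists of a morphism $h\colon G(j')\to G(j)$ in $I$ together with $\beta\colon F(h)(x)\to x'$ in $F(G(j'))$. Since $G$ is fully faithful, $h$ lifts uniquely to $G(f)$ for some $f\colon j'\to j$ in $J$, and the identifications $F(G(f)) = FG(f)$ and $F(G(j')) = FG(j')$ make $\beta$ literally the second coordinate of the corresponding morphism in $\int_{J^{\op}}FG$, yielding a bijection on hom-sets.

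For essential surjectivity, given $(i,x)\in\int_{I^{\op}}F$, set $j = G'(i)$ and $x_0 = F(\epsilon_i^{-1})(x)\in F(GG'(i)) = FG(G'(i))$, where $\epsilon_i\colon i \to GG'(i)$ denotes the component of $\epsilon$ viewed as a morphism in $I$ (so $F(\epsilon_i^{-1})\colon F(i)\to F(GG'(i))$). Then $\int_G F(j, x_0) = (GG'(i), F(\epsilon_i^{-1})(x))$, and I will exhibit an isomorphism $(\epsilon_i^{-1}, \id)\colon (i, x) \to (GG'(i), F(\epsilon_i^{-1})(x))$ in $\int_{I^{\op}} F$, with candidate inverse $(\epsilon_i, \gamma)\colon (GG'(i), F(\epsilon_i^{-1})(x)) \to (i, x)$. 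The second coordinate $\gamma\colon F(\epsilon_i)F(\epsilon_i^{-1})(x)\to x$ should be chosen so that the two triangle identities arising from the composition formula in \cref{defn:grothconstruction} are satisfied; these translate into equations relating $\gamma$ to $(\tau^F_i)_x$, $(\tau^F_{GG'(i)})_{F(\epsilon_i^{-1})(x)}$ and the components of $\tau^F_{\epsilon_i,\epsilon_i^{-1}}$ and $\tau^F_{\epsilon_i^{-1},\epsilon_i}$.

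The principal technical obstacle is producing such a $\gamma$ in the purely oplax (rather than pseudo) setting, since for a general oplax $F$ the components of $\tau^F$ at isomorphisms need not themselves be invertible. In the pseudofunctor case of \cref{def:pseudofunctor}, the relevant $\tau^F$ components are invertible, so $\gamma$ is forced by the first triangle identity and the second follows from the associativity and unitality axioms of \cref{defn:oplax}. For the general oplax setting, one may either replace $F$ by an equivalent pseudofunctor via a standard strictification procedure --- to which the argument above applies --- or verify the triangle identities directly via a diagram chase using the coherence axioms. Combining fully faithfulness with essential surjectivity then yields that $\int_G F$ is an equivalence of categories, as claimed.
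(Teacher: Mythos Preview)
Your construction of $\int_G F$ and the fully faithful argument match the paper's exactly. For essential surjectivity the paper is terser than you: it picks an isomorphism $\ell\colon i\to G(j)$, asserts without justification that $F(\ell)$ is an isomorphism of categories so that some $y$ with $F(\ell)(y)=x$ exists, and then simply declares $(\ell,\id_x)\colon (G(j),y)\to (i,x)$ to be an isomorphism in $\int_{I^{\op}}F$.

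You are right to flag the oplax obstacle at this step, but neither of your proposed fixes works. The suggestion to ``replace $F$ by an equivalent pseudofunctor via a standard strictification procedure'' is incorrect: an oplax functor whose structure cells $\tau_a$ or $\tau_{f,g}$ are not invertible is not equivalent, in any reasonable sense, to a pseudofunctor, so no such replacement is available. And the unspecified diagram chase cannot succeed in full generality either, because the statement as written can actually fail. For instance, take $I$ to be the walking isomorphism on objects $0,1$ with $\sigma\colon 0\to 1$, let $J$ be the terminal category with $G$ the inclusion at $0$, and define an oplax $F\colon I^{\op}\to\Cat$ by $F(0)=F(1)=[1]$, $F(\id_0)=F(\id_1)=\id_{[1]}$, and $F(\sigma)=F(\sigma^{-1})$ the constant functor at the terminal object $1\in[1]$, with unit cells the identity and $\tau_{\sigma,\sigma^{-1}}=\tau_{\sigma^{-1},\sigma}$ the unique natural transformation $\id_{[1]}\Rightarrow\mathrm{const}_1$. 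One checks the coherence axioms hold, yet the object $(1,0)$ of $\int_{I^{\op}}F$ admits no morphism from any $(0,x)$ (since $F(\sigma^{-1})(x)=1$ and there is no arrow $1\to 0$ in $[1]$), so $\int_G F$ is not essentially surjective. The paper's proof glosses over this same point; the argument you sketch, and the paper's, does go through once one assumes $F$ is a pseudofunctor, or more generally that $F$ sends isomorphisms in $I$ to equivalences of categories.
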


Observe that, while we are not assuming that the functor $\int_G F$ is a Grothendieck construction, we find using similar notation for it is suggestive of its behavior.

\begin{proof}
    Recall that an object of $\int_{J^{\op}} FG$ is given by a pair $(j, x)$ where $j$ is an object of $J$ and $x$ is an object of $FG(j)$.  We define $\int_GF$ by sending $(j,x)$ to $(G(j),x)$, and the morphism $(f,\alpha)\colon (j,x)\to (k,y)$ to $(G(f),\alpha)\colon (G(j), x)\to (G(k), y)$ in $\int_{I^{\op}} F$.  This assignment preserves identities because $G$ is a functor, and it preserves composition because $G$ is a functor and because $\int_{G}F$ is the identity on the second coordinate.  Observe this functor is full and faithful whenever $G$ is. 

    To prove the second statement, suppose that $G$ is an equivalence of categories.  If $(i,x)$ is an object in $\int_{I^{\op}} F$, choose an object $j$ of $J$ together with an isomorphism $\ell\colon i \to G(j)$ in $I$.  Then $F(\ell)\colon FG(j)\to F(j)$ is an isomorphism of categories, so there exists an object $y$ of $FG(j)$ such that $F(\ell)(y)=x$.  Then the map $(\ell,\id_{x})\colon (G(j),y)\to (i,x)$ is an isomorphism, so $\int_G F$ is essentially surjective.  Applying the observation about fully faithfulness above, we conclude that $\int_G F$ is an equivalence of categories.
\end{proof}

\bibliographystyle{alpha}
\bibliography{references}

\end{document}